\newtheorem{thm}{Theorem}
\newtheorem{lem}[thm]{Lemma}
\newtheorem{prop}[thm]{Proposition}
\newtheorem{algo}{Algorithm}
\theoremstyle{definition}
\newtheorem{rmk}[thm]{Remark}
\newtheorem{ex}[thm]{Example}
\setlist[enumerate]{label=$\rm{(\roman*)}$,leftmargin=\parindent}
\numberwithin{equation}{section}
\numberwithin{thm}{section}
\numberwithin{table}{section}
\numberwithin{figure}{section}
\newcommand{\sR}{\mathbb{R}}
\newcommand{\sS}{\mathbb{S}}
\newcommand{\sH}{\mathcal{H}}
\newcommand{\sG}{\mathcal{G}}
\newcommand{\sK}{\mathcal{K}}
\newcommand{\sX}{\mathcal{X}}
\newcommand{\sY}{\mathcal{Y}}
\newcommand{\mysum}{\displaystyle \sum\limits}
\newcommand{\dom}{\mathrm{dom}}
\newcommand{\Id}{\mathrm{Id}}
\newcommand{\bO}{\mathcal{O}}
\newcommand{\E}{\mathcal{E}}
\newcommand{\Q}{\mathcal{Q}}
\newcommand{\W}{\mathcal{W}}
\newcommand{\Lag}{\mathcal{L}}
\newcommand{\Lb}{\mathcal{L}_{\beta}}
\newcommand{\TL}{\mathcal{T}_{\Lag}}
\newcommand{\sol}{\mathcal{S}}
\newcommand{\Fea}{\mathcal{F}}
\newcommand{\sB}{\mathbb{B}}
\newcommand{\tx}{\widetilde{x}}
\newcommand{\bx}{\widebar{x}}
\newcommand{\tlambda}{\widetilde{\lambda}}
\newcommand{\blambda}{\widebar{\lambda}}
\newcommand{\tnu}{\widetilde{\nu}}
\newcommand{\CAstar}{C_{0}}
\newcommand{\Csup}{C_{1}}
\newcommand{\CLag}{C_{2}}
\newcommand{\Cite}{C_{3}}
\newcommand{\Clog}{C_{4}}
\newcommand{\Ctel}{C_{5}}
\newcommand{\Cbnd}{C_{6}}
\newcommand{\Cnu}{C_{7}}
\title{Fast Augmented Lagrangian Method in the convex regime with convergence guarantees for the iterates}
\author{Radu Ioan Bo\c{t}\footnote{Faculty of Mathematics, University of Vienna, Oskar-Morgenstern-Platz 1, 1090 Vienna, Austria, e-mail: \url{radu.bot@univie.ac.at}. Research partially supported by FWF (Austrian Science Fund), project W 1260.}
\and Ern\"{o} Robert Csetnek\footnote{Faculty of Mathematics, University of Vienna, Oskar-Morgenstern-Platz 1, 1090 Vienna, Austria, e-mail: \url{robert.csetnek@univie.ac.at}. Research partially supported by FWF (Austrian Science Fund), project P 29809-N32.}
\and Dang-Khoa Nguyen\footnote{Faculty of Mathematics, University of Vienna, Oskar-Morgenstern-Platz 1, 1090 Vienna, Austria, e-mail: \url{dang-khoa.nguyen@univie.ac.at}. Research supported by FWF (Austrian Science Fund), project P 29809-N32.}}
\begin{document}
	
\maketitle	

\begin{abstract}
This work aims to minimize a continuously differentiable convex function with Lipschitz continuous gradient under linear equality constraints. The proposed inertial algorithm results from the discretization of the second-order primal-dual dynamical system with asymptotically vanishing damping term addressed by Bo\c t and Nguyen in \cite{Bot-Nguyen}, and it is formulated in terms of the Augmented Lagrangian associated with the minimization problem. The general setting we consider for the inertial parameters covers the three classical rules by Nesterov, Chambolle-Dossal and Attouch-Cabot used in the literature to formulate fast gradient methods. For these rules, we obtain in the convex regime convergence rates of order $\bO \left( 1/k^{2} \right)$ for the primal-dual gap, the feasibility measure, and the objective function value.  In addition, we prove that the generated sequence of primal-dual iterates converges to a primal-dual solution in a general setting that covers the two latter rules. This is the first result which provides the convergence of the sequence of iterates generated by a fast algorithm for linearly constrained convex optimization problems without additional assumptions such as strong convexity. We also emphasize that all convergence results of this paper are compatible with the ones obtained in \cite{Bot-Nguyen} in the continuous setting.
\end{abstract}	

\noindent \textbf{Key Words.} Augmented Lagrangian Method, primal-dual numerical algorithm, Nesterov's fast gradient method, convergence rates, iterates convergence
\vspace{1ex}

\noindent \textbf{AMS subject classification.} 49M29, 65K05, 68Q25, 90C25, 65B99
	
\section{Introduction}	

\subsection{Problem formulation and motivation}

Consider the optimization problem
\begin{align}
\label{intro:pb}
\begin{array}{rl}
\min & f \left( x \right), \\
\textrm{subject to} 	& Ax = b
\end{array}
\end{align}
where $\sH, \sG$ are real Hilbert spaces, $f \colon \sH \to \sR$ is a convex and Fr\'echet differentiable function with $L-$Lipschitz continuous gradient, for $L >0$, $A \colon \sH \to \sG$ is a continuous linear operator and $b \in \sG$. We assume that the set $\sol$ of primal-dual optimal solutions of \eqref{intro:pb} (see Section \ref{augLagrangian} for a precise definition) is nonempty. 

Optimization problems of type \eqref{intro:pb} arise in many applications in areas like image recovery \cite{Goldstein-ODonoghue-Setzer-Baraniuk,Ouyang-Chen-Lan-PasiliaoJr, Tran-Dinh-Fercoq-Cevher, Xu}, machine learning \cite{Boyd-et.al,Lin-Li-Fang}, and network optimization \cite{Zeng-Lei-Chen}.

Other than in the unconstrained case, for which fast continuous and discrete time approaches have been intensively investigated in the last years, the study of solution methods with fast convergence rates for linearly constrained convex optimization problems of the form \eqref{intro:pb} is in an incipient stage.

Zeng, Lei, and Chen (in \cite{Zeng-Lei-Chen}) and He, Hu, and Fang (in \cite{He-Hu-Fang:ds}) have investigated a dynamical system with asymptotic vanishing damping  attached to \eqref{intro:pb}, and have shown a convergence rate of order $\bO \left( 1/t^{2} \right)$ for the primal-dual gap, while Attouch, Chbani, Fadili and Riahi have considered in \cite{Attouch-Chbani-Fadili-Riahi} a more general dynamical system with time rescaling. More recently, for a primal-dual dynamical system formulated in the spirit of \cite{Attouch-Chbani-Fadili-Riahi, He-Hu-Fang:ds, Zeng-Lei-Chen}, Bo\c t and Nguyen have obtained in \cite{Bot-Nguyen} fast convergence rates for the primal-dual gap, the feasibility measure and the objective function value along the generated trajectory, and, additionally, have proved asymptotic convergence guarantees for the primal-dual trajectory to a primal-dual optimal solution.

Fast numerical methods for solving \eqref{intro:pb} have been mainly considered in the literature under additional assumptions such as strong convexity, and in several cases the convergence rate results have been formulated in terms of ergodic sequences. In the merely convex regime no convergence result for the iterates has been provided so far for fast convergence algorithms. To the works addressing fast converging methods for linearly constrained convex optimization problems belong \cite{Chambolle-Pock, Chen-Lan-Ouyang, Goldstein-ODonoghue-Setzer-Baraniuk, He-Hu-Fang, Lou:1,Lou:2, Ouyang-Chen-Lan-PasiliaoJr, Sabach-Teboulle, Tran-Dinh-Fercoq-Cevher, Tseng,  Xu, Yan-He}, at which we will take a closer look in Section \ref{subsec:related}. 

The aim of this paper is to propose a numerical algorithm for solving \eqref{intro:pb}, which results from the discretization of the dynamical system in \cite{Bot-Nguyen}, exhibits fast convergence rates for the primal-dual gap, the feasibility measure, and the objective function value as well as convergence of the sequence of iterates without additional assumptions such as strong convexity. Although there is an obvious interplay between continuous time dissipative dynamical systems and their discrete counterparts, one cannot directly and straightforwardly transfer asymptotic results from the continuous setting to numerical algorithms, thus, a separate analysis is needed for the latter. In this paper we will also comment on the similarities and the differences between the continuous and discrete time approaches.

\subsection{Augmented Lagrangian formulation}\label{augLagrangian}

Consider the saddle point problem associated to problem \eqref{intro:pb}
\begin{equation*}
\min_{x \in \sH} \max_{\lambda \in \sG} \Lag \left( x , \lambda \right),
\end{equation*}
where $\Lag \colon \sH \times \sG \to \sR$ denotes the Lagrangian function
\begin{equation*}
\Lag \left( x , \lambda \right) := f \left( x \right) + \left\langle \lambda , Ax - b \right\rangle .
\end{equation*}
Since $f$ is a convex function, $\Lag$ is convex with respect to $x \in \sH$ and affine with respect to $\lambda \in \sG$. A pair $\left( x_{*} , \lambda_{*} \right) \in \sH \times \sG$ is said to be a saddle point of the Lagrangian function $\Lag$ if for every $\left( x , \lambda \right) \in \sH \times \sG$
\begin{equation*}
\Lag \left( x_{*} , \lambda \right) \leq \Lag \left( x_{*} , \lambda_{*} \right) \leq \Lag \left( x , \lambda_{*} \right).
\end{equation*}
If $\left( x_{*} , \lambda_{*} \right) \in \sH \times \sG$ is a saddle point of $\Lag$, then $x_{*} \in \sH$ is an optimal solution of \eqref{intro:pb} and $\lambda_{*} \in \sG$ is an optimal solution of its Lagrange dual problem. If $x_{*} \in \sH$ is an optimal solution of \eqref{intro:pb} and a suitable constraint qualification is fulfilled (see, for instance, \cite{Bauschke-Combettes:book,Bot:book}), then there exists an optimal solution $\lambda_{*} \in \sG$ of the Lagrange dual problem of \eqref{intro:pb} such that $\left( x_{*} , \lambda_{*} \right) \in \sH \times \sG$ is a saddle point of $\Lag$. 

The set of saddle points of $\Lag$, called also set of primal-dual optimal solutions of \eqref{intro:pb}, will be denoted by $\sol$ and, as stated above, throughout this paper it will be assumed to be nonempty.  The set of feasible points of \eqref{intro:pb} will be denoted by $\Fea := \left\lbrace x \in \sH \colon Ax = b \right\rbrace$ and the optimal objective value of \eqref{intro:pb}  by $f_{*}$.

The system of primal-dual optimality conditions for \eqref{intro:pb} reads
\begin{equation}
\label{intro:opt-Lag}
\left( x_{*} , \lambda_{*} \right) \in \sol
\Leftrightarrow \begin{cases}
\nabla_{x} \Lag \left( x_{*} , \lambda_{*} \right) 			& = 0 \\
\nabla_{\lambda} \Lag \left( x_{*} , \lambda_{*} \right) 	& = 0
\end{cases} \Leftrightarrow \begin{cases}
\nabla f \left( x_{*} \right) + A^{*} \lambda_{*} 	& = 0 \\
Ax_{*} - b 													& = 0
\end{cases},
\end{equation}
where $A^{*} : \sG \rightarrow \sH$ denotes the adjoint operator of $A$.
This optimality system can be equivalently written as
\begin{equation*}
\TL \left( x_{*} , \lambda_{*} \right) = 0 ,
\end{equation*}
where
\begin{equation*}
\TL \colon \sH \times \sG \to \sH \times \sG, \quad \TL \left( x , \lambda \right)
= \begin{pmatrix}
\nabla_{x} \Lag \left( x , \lambda \right) \\ - \nabla_{\lambda} \Lag \left( x , \lambda \right)
\end{pmatrix}
= \begin{pmatrix}
\nabla f \left( x \right) + A^{*} \lambda \\ b-Ax
\end{pmatrix},
\end{equation*}
is the maximally monotone operator associated to the convex-concave function $\Lag$. Indeed, it is immediate to verify that $\TL$ is monotone. Since it is also continuous, it is maximally monotone (see, for instance, \cite[Corollary 20.28]{Bauschke-Combettes:book}).
Therefore $\sol$ can be interpreted as the set of zeros of the maximally monotone operator $\TL$, which means that it is a closed convex subset of $\sH \times \sG$ (see, for instance, \cite[Proposition 23.39]{Bauschke-Combettes:book}).

For $\beta \geq 0$, we also consider the augmented Lagrangian $\Lb \colon \sH \times \sG \to \sR$ associated with \eqref{intro:pb}
\begin{equation*}
\Lb \left( x , \lambda \right) := \Lag \left( x , \lambda \right) + \dfrac{\beta}{2} \left\lVert Ax - b \right\rVert ^{2} = f \left( x \right) + \left\langle \lambda , Ax - b \right\rangle + \dfrac{\beta}{2} \left\lVert Ax - b \right\rVert ^{2} .
\end{equation*}
For every $(x, \lambda) \in \Fea \times \sG$ it holds
\begin{equation}
\label{intro:Fea:eq}
f \left( x \right) = \Lb \left( x , \lambda \right) = \Lag \left( x , \lambda \right) .
\end{equation}
If $\left( x_{*} , \lambda_{*} \right) \in \sol$, then for every $\left( x , \lambda \right) \in \sH \times \sG$ we have
\begin{equation}
\label{intro:saddle}
\Lag \left( x_{*} , \lambda \right) = \Lb \left( x_{*} , \lambda \right) = \Lag \left( x_{*} , \lambda_{*} \right) = \Lb \left( x_{*} , \lambda_{*} \right)
\leq \Lag \left( x , \lambda_{*} \right) \leq \Lb \left( x , \lambda_{*} \right).
\end{equation}
In addition,
\begin{equation*}
\left( x_{*} , \lambda_{*} \right) \in \sol
\Leftrightarrow \begin{cases}
\nabla_{x} \Lb \left( x_{*} , \lambda_{*} \right) 			& = 0 \\
\nabla_{\lambda} \Lb \left( x_{*} , \lambda_{*} \right) 	& = 0
\end{cases} \Leftrightarrow \begin{cases}
\nabla f \left( x_{*} \right) + A^{*} \lambda_{*} 	& = 0 \\
Ax_{*} - b 													& = 0
\end{cases} .
\end{equation*}

\subsection{Related works}
\label{subsec:related}

In this section we will recall the most significant fast primal-dual numerical approaches for linearly constrained convex optimization problems and for convex optimization problems involving compositions with continuous linear operators.

In  \cite{Chambolle-Pock},  Chambolle and Pock have studied in a finite-dimensional setting the convergence rates of their celebrated primal-dual algorithm for solving the minimax problem
\begin{equation}
\label{intro:pb-minmax}
\min_{x \in \sH} \max_{\lambda \in \sG} {\cal L} (x,\lambda) := f \left( x \right) + \left\langle Ax , \lambda \right\rangle - g^{*} \left( \lambda \right),
\end{equation}
which is naturally attached to the convex optimization problem
\begin{equation}\label{intro:composed}
\min_{x \in \sH} f(x) + g(Ax), 
\end{equation}
with $f \colon \sH \to \sR \cup \left\lbrace + \infty \right\rbrace$ and $g \colon \sG \to \sR \cup \left\lbrace + \infty \right\rbrace$ proper, convex and lower semicontinuous functions and $g^{*} \colon \sG \to \sR \cup \left\lbrace + \infty \right\rbrace$ the Fenchel conjugate of $g$. The problem \eqref{intro:composed} becomes  \eqref{intro:pb} for $g$ the indicator function of the set $\{b\}$.  For the primal-dual sequence of iterates  $\left\lbrace (x_{k} , \lambda_{k}) \right\rbrace _{k \geq 0}$ the corresponding ergodic sequence $\left\lbrace (\bx_{k} , \blambda_{k}) \right\rbrace _{k \geq 0}$ is defined for every $k \geq 0$ as
\begin{equation*}
\bx_{k} := \dfrac{1}{\sum_{i = 0}^{k} \sigma_{i}} \mysum_{i = 0}^{k} \sigma_{i} x_{i}
\qquad \textrm{ and } \qquad
\blambda_{k} := \dfrac{1}{\sum_{i = 0}^{k} \sigma_{i}} \mysum_{i = 0}^{k} \sigma_{i} \lambda_{i},
\end{equation*}
where $\left\lbrace \sigma_{k} \right\rbrace _{k \geq 0}$ is a sequence of properly chosen positive step sizes.  The Chambolle-Pock primal-dual algorithm exhibits for the restricted primal-dual gap an ergodic convergence rate of
\begin{equation*}
\sup \limits_{\left( x , \lambda \right) \in \sX \times \sY} \left( \Lag \left(\bx_{k} , \lambda \right) - \Lag \left( x , \blambda_{k} \right) \right) = \bO \left( \dfrac{1}{k} \right) \ \mbox{as} \ k \rightarrow +\infty,
\end{equation*}
where $\sX \subseteq \sH$ and $\sY \subseteq \sG$ are bounded sets.  If $f$ is strongly convex, then the accelerated variant of this primal-dual algorithm exhibits for the same restricted primal-dual gap an ergodic convergence rate of 
$$\sup \limits_{\left( x , \lambda \right) \in \sX \times \sY} \left( \Lag \left(\bx_{k} , \lambda \right) - \Lag \left( x , \blambda_{k}\right) \right) = \bO \left( \dfrac{1}{k^2} \right) \ \mbox{as} \ k \rightarrow +\infty$$
whereas, if both $f$ and $g^*$ are strongly convex, then even linear convergence can be achieved.

In \cite{Chen-Lan-Ouyang}, Chen, Lan and Ouyang have considered the same minimax problem \eqref{intro:pb-minmax}, but for $f \colon \sH \to \sR$ a convex and Fr\'echet differentiable function with $L$-Lipschitz continuous gradient, for $L>0$, and have proposed a primal-dual algorithm that  exhibits for the restricted primal-dual gap an ergodic convergence rate of
\begin{equation}\label{mixed}
\sup\limits_{\left( x , \lambda \right) \in \sX \times \sY} \left( \Lag \left(\bx_{k} , \lambda \right) - \Lag \left( x , \blambda_{k} \right) \right) = \bO \left( \dfrac{L}{k^{2}} + \dfrac{\left\lVert A \right\rVert}{k} \right) \ \mbox{as} \ k \rightarrow +\infty.
\end{equation}
A stochastic counterpart of the primal-dual algorithm along with corresponding convergence rate results and, for both the deterministic and the stochastic setting, convergence rates when either $\sX$ or $\sY$ is unbounded have been also provided.

Later on, Ouyang, Chen, Lan and Pasiliao Jr.  have developed in  \cite{Ouyang-Chen-Lan-PasiliaoJr} an accelerated ADMM algorithm for the optimization problem \eqref{intro:composed} with $f$ assumed to be Fr\'echet differentiable with $L$-Lipschitz continuous gradient, for $L>0$,  on its effective domain.  In the case when $f$ and $g^*$ have bounded domains this method has been proved to exhibit an ergodic convergence rate for the objective function value of type \eqref{mixed},  with the coefficient of $1/{k^2}$ depending on $L$ and the diameter of $\dom f$ and the coefficient of $1/k$ depends on $\|A\|$ and of the diameter of $\dom g^*$.  On the other hand, without assuming boundedness for the domains of $f$ and $g^*$, the accelerated ADMM algorithms has been proved to exhibit ergodic convergence rates for the feasibility measure and the objective function value of $\bO \left( 1/k \right)$ as $k \rightarrow +\infty$.

By using a smoothing approach, Tran-Dinh, Fercoq and Cevher have designed in \cite{Tran-Dinh-Fercoq-Cevher} a  primal-dual algorithm for solving \eqref{intro:composed} and its particular formulation \eqref{intro:pb} that exhibits  last iterates convergence rates for the objective function value and the feasibility measure in the convex regime of $\bO \left( 1/k \right)$, and in the strongly convex regime of $\bO \left( 1/{k^2} \right)$ as $k \rightarrow +\infty$.

Goldstein, O'Donoghue, Setzer and Baraniuk have studied in \cite{Goldstein-ODonoghue-Setzer-Baraniuk} the two-block separable optimization problem with linear constraints
\begin{align}
\label{intro:pb-2b}
\begin{array}{rl}
\min & f \left( x \right) + h \left( y \right), \\
\textrm{subject to} 	& Ax+By = b
\end{array}
\end{align}
where $\sK$ is another real Hilbert space, $f \colon \sH \to  \sR \cup \left\lbrace + \infty \right\rbrace$ and  $h \colon \sK \to \sR \cup \left\lbrace + \infty \right\rbrace$ are proper, convex and lower semicontinuous functions, $A \colon \sH \to \sG$ and  $B \colon \sK \to \sG$ are continuous linear operators and $b \in \sG$.  It is obvious that \eqref{intro:pb} can be reformulated as  \eqref{intro:pb-2b} and vice versa. In \cite{Goldstein-ODonoghue-Setzer-Baraniuk} a numerical algorithm for solving \eqref{intro:pb-2b} has been proposed that exhibits, when $f$ and $h$ are strongly convex, convergence rates for the dual objective function of $\bO \left( 1/k^{2} \right)$ and for the feasibility measure of $\bO \left( 1/k \right)$ as $k \rightarrow +\infty$.  For a fast version of the Alternating Minimization Algorithm (see \cite{Tseng}) a convergence rate for the dual objective function of $\bO \left( 1/k^{2} \right)$ as $k \rightarrow +\infty$ has been also proved.

Xu has proposed in \cite{Xu} a linearized Augmented Lagrangian Method for the optimization problem \eqref{intro:pb} for which he has shown that it exhibits for constant step sizes  ergodic convergence rates of $\bO \left( 1/k \right)$ as $k \rightarrow +\infty$ for the feasibility measure and the objective function value, whereas the sequence of primal-dual iterates has been shown to converge to a primal-dual solution.  He has also proved that for appropriately chosen variable step sizes, in particular when allowing the dual step sizes to be unbounded,  the convergence rates of the feasibility measure and the objective function value can be improved to $\bO \left( 1/{k^2} \right)$ as $k \rightarrow +\infty$,  without saying anything about the convergence of the primal-dual iterates in this setting.  In addition,  a linearized Alternating Direction Method of Multipliers for \eqref{intro:pb-2b} has been proposed in \cite{Xu}, for which similar statements as for the linearized Augmented Lagrangian Method have been proved, whereby the fast convergence rates have been obtained by assuming that one of the summands in the objective function is strongly convex.

In \cite{He-Yuan},  He and Yuan have enhanced the Augmented Lagrangian Method for the linearly constrained convex optimization problem \eqref{intro:pb} with a Nesterov's momentum update rule for the sequence of dual iterates.  They have proved that the expression $\Lag(x_*,\lambda_*) - \Lag \left(x_{k} , \lambda_k \right)$ has an upper bound of order $1/k^2$, where $\left(x_{k} , \lambda_k \right)_{k \geq 0}$ denotes the generated sequence of primal-dual iterates and $(x_*,\lambda_*)$ is an arbitrary optimal solution of the Wolfe dual problem of  \eqref{intro:pb}.

In \cite{Yan-He}, Yan and He have proposed for optimization problems of type \eqref{intro:pb}, with a proper, convex and lower semicontinuous objective function, a numerical algorithm which combines the Augmented Lagrangian Method with a Bregman proximal evaluation of the objective. When choosing the sequence of proximal parameter to fulfil $\eta_{k} := \eta \left( k + 1 \right) ^{p}$ for every $k \geq 0$, where $\eta >0$ and $p \geq 0$, ergodic convergence rates of 
$$\sup \limits_{\left( x , \lambda \right) \in \sX \times \sY} \left( \Lag \left(\bx_{k} , \lambda \right) - \Lag \left( x , \blambda_{k}\right) \right) = \bO \left( \dfrac{1}{k^{p+2}} \right)  \ \mbox{as} \ k  \rightarrow +\infty,$$
$$\left\lVert A \bx_{k} - b \right\rVert = \bO \left( \dfrac{\log \left( k \right)}{k^{p+2}} \right) \mbox{ and } \left\lvert f \left( \bx_{k} \right) - f_{*} \right\rvert = \bO \left( \dfrac{\log \left( k \right)}{k^{p+2}} \right)  \ \mbox{as} \ k  \rightarrow +\infty$$
have been obtained.

In \cite{Sabach-Teboulle}, Sabach and Teboulle have considered a unified algorithmic framework for proving faster convergence rates for various Lagrangian-based methods designed to solve optimization problems of type \eqref{intro:pb} with a proper, convex and lower semicontinuous objective function. In the convex regime these methods exhibit a non-ergodic rate of convergence of $\bO \left( 1/k \right)$ as $k \rightarrow +\infty$ for the feasibility measure and the objective function value, namely,
\begin{equation*}
f \left( x_{k} \right) - f_{*} \ \mbox{has an upper bound of order} \  \bO \left( \dfrac{1}{k} \right) \ \mbox{and} \ \left\lVert Ax_{k} - b \right\rVert =  \bO \left( \dfrac{1}{k} \right) \ \mbox{as} \ k  \rightarrow +\infty.
\end{equation*}
In the strongly convex regime the convergence rates can be improved to $\bO \left( 1/k^{2} \right)$  as $k \rightarrow +\infty$.

For the same class of optimization problems, He, Hu, and Fang have proposed in \cite{He-Hu-Fang} an accelerated primal-dual Lagrangian-based method, with inertial parameters following the choice of Chambolle-Dossal, that achieves a convergence rate of $\bO \left( 1/k^{2} \right)$ as $k \rightarrow +\infty$ for the feasibility measure and the objective function value without any strong convexity assumption. 

Recently, in  \cite{Lou:1}, Lou have introduced in the same context an unifying algorithmic scheme which covers both the convex and the strongly convex setting. In the convex regime a convergence rate of $\bO \left( 1/k \right)$ as $k \rightarrow +\infty$ is obtained for the primal-dual gap, the feasibility measure, and the objective function value, while in the  strongly convex regime these rates are improved to $\bO \left( 1/k^{2} \right)$ as $k \rightarrow +\infty$. These results have been extended to optimization problems of type \eqref{intro:pb-2b} in \cite{Lou:2}, where it has been shown that, in order to achieve a convergence rate of $\bO \left( 1/k^{2} \right)$ as $k \rightarrow +\infty$, it is enough to assume that only one of the functions in the objective is strongly convex.

Noticeably none of theses works has addressed to convergence of the sequences of primal-dual iterates, with very few exceptions in the strongly convex regime. This phenomenon could be noticed for unconstrained convex optimization problems, too. The convergence of the sequences of iterates generated by fast numerical methods has been proved much later (by Chambolle and Dossal in \cite{Chambolle-Dossal} and by Attouch and Peypouquet in \cite{Attouch-Peypouquet}) after the derivation of the convergence rates for Nesterov's accelerated gradient method (\cite{Nesterov:83}) and FISTA (\cite{FISTA}). One explanation for this is that the analysis of the first is much more involved.

\subsection{Our contributions}

We consider as starting point a second-order dynamical system with asymptotic vanishing damping term associated with the optimization problem \eqref{intro:pb}.  This dynamical system is formulated in terms of the augmented Lagrangian and it has been studied in \cite{Bot-Nguyen}. By an appropriate time discretization this system gives rise to an inertial primal-dual numerical algorithm, which allows a flexible choice of the inertial parameters.  This choice covers the three classical inertial parameters rules by Nesterov (\cite{FISTA, Nesterov:83}), Chambolle-Dossal (\cite{Chambolle-Dossal}) and Attouch-Cabot (\cite{Attouch-Cabot:18}) used in the literature to formulate fast gradient methods. We show that for these rules the resulting algorithm exhibits in the convex regime convergence rates of order $\bO \left( 1/k^{2} \right)$ for the primal-dual gap, the feasibility measure, and the objective function value. In addition, we prove that the generated sequence of primal-dual iterates converges weakly to a primal-dual solution of the underlying problem, which is nothing else than a saddle-point of the Lagrangian. The convergence of the iterates is stated in a general setting that covers the inertial parameters rules by Chambolle-Dossal and Attouch-Cabot. This is the first result which provides the convergence of the sequence of iterates generated by a fast algorithm for linearly constrained convex optimization problems without additional assumptions such as strong convexity.  All convergence and convergence rate results of this paper are compatible with the ones obtained in \cite{Bot-Nguyen} in the continuous setting.

The proposed Fast Augmented Lagrangian Method and all convergence results can be easily extended by using the product space approach to two-block separable linearly constrained optimization problems of the form \eqref{intro:pb-2b} with $f$ and $h$ convex and Fr\'echet differentiable functions with Lipschitz continuous gradients.

\subsection{Notations and preliminaries}

We denote by $\sB \left( x ; \varepsilon \right) := \left\lbrace y \in \sH \colon \left\lVert x - y \right\rVert \leq \varepsilon \right\rbrace$ the closed ball centered at $x \in \sH$ with radius $\varepsilon > 0$.

Let $x, y \in \sH$. We have
\begin{equation}
\label{pre:id-eq}
\left\lVert x + y \right\rVert ^{2} = \left\lVert x \right\rVert ^{2} + \left\lVert y \right\rVert ^{2} + 2 \left\langle x , y \right\rangle .
\end{equation}
For every $s, t \in \sR$ such that $s+t=1$ it holds (\cite[Corollary 2.15]{Bauschke-Combettes:book})
\begin{equation}
\label{pre:sum-1}
\left\lVert sx + ty \right\rVert ^{2}
= s \left\lVert x \right\rVert ^{2} + t \left\lVert y \right\rVert ^{2} - st \left\lVert x - y \right\rVert ^{2}.
\end{equation}
From here one can easily deduce that for $s, t \in \sR$ such that $s+t \neq 0$ it holds
\begin{equation}
\label{pre:sum-n0}
\dfrac{1}{s+t} \left\lVert sx + ty \right\rVert ^{2}
= s \left\lVert x \right\rVert ^{2} + t \left\lVert y \right\rVert ^{2} - \dfrac{st}{s+t} \left\lVert x - y \right\rVert ^{2} .
\end{equation}

We denote by $\sS_{+} \left( \sH \right)$ the family of self-adjoint and positive semidefinite continuous linear operators $\W \colon \sH \to \sH$. Every $\W \in \sS_{+} \left( \sH \right)$ induces on $\sH$ a semi-norm defined by
\begin{equation*}
\left\lVert x \right\rVert _{\W}^{2} = \left\langle x , x \right\rangle _{\W} := \left\langle \W x , x \right\rangle \qquad \forall x \in \sH .
\end{equation*}
The Loewner partial ordering on $\sS_{+} \left( \sH \right)$ is defined for $\W, \W' \in \sS_{+} \left( \sH \right)$ as
\begin{equation*}
\W \succcurlyeq \W' \Leftrightarrow \left\lVert x \right\rVert _{\W}^{2} \geq \left\lVert x \right\rVert _{\W'}^{2} \qquad \forall x \in \sH .
\end{equation*}
Thus $\W \in \sS_{+} \left( \sH \right)$ is nothing else than $\W \succcurlyeq 0$.
If there exists $\alpha > 0$ such that  $\W \succcurlyeq \alpha \Id$ then the semi-norm $\left\lVert \cdot \right\rVert _{\W}$ becomes a norm.

In the spirit of \eqref{pre:id-eq} and \eqref{pre:sum-n0}, respectively, for every $x, y \in \sH$ it holds
\begin{equation}
\label{pre:vm:id-eq}
\left\lVert x + y \right\rVert _{\W}^{2} = \left\lVert x \right\rVert _{\W}^{2} + \left\lVert y \right\rVert _{\W}^{2} + 2 \left\langle x, y \right\rangle _{\W},
\end{equation}
and for every real numbers $s, t$ such that $s+t \neq 0$
\begin{equation}
\label{pre:vm:sum-n0}
\dfrac{1}{s+t} \left\lVert sx + ty \right\rVert _{\W}^{2}
= s \left\lVert x \right\rVert _{\W}^{2} + t \left\lVert y \right\rVert _{\W}^{2} - \dfrac{st}{s+t} \left\lVert x - y \right\rVert _{\W}^{2} .
\end{equation}

Let $f \colon \sH \to \sR$ be a continuously differentiable and convex function such that $\nabla f$ is $L-$Lipschitz continuous, for $L >0$. For every $x, y \in \sH$ it holds (see \cite[Theorem 2.1.5]{Nesterov:book} or \cite[Theorem 18.15]{Bauschke-Combettes:book})
\begin{equation}
\label{pre:f-bound}
0 \leq \dfrac{1}{2L} \left\lVert \nabla f \left( x \right) - \nabla f \left( y \right) \right\rVert ^{2} \leq f \left( x \right) - f \left( y \right) - \left\langle \nabla f \left( y \right) , x - y \right\rangle \leq \dfrac{L}{2} \left\lVert x - y \right\rVert ^{2} .
\end{equation}
The second inequality is also known as the Descent Lemma.

The following result is a particular instance of \cite[Lemma 5.31]{Bauschke-Combettes:book} and will be used several times in this paper.
\begin{lem}
	\label{lem:quasi-Fej}
	Let $\left\lbrace a_{k} \right\rbrace _{k \geq 1}$, $\left\lbrace b_{k} \right\rbrace _{k \geq 1}$ and $\left\lbrace d_{k} \right\rbrace _{k \geq 1}$ be sequences of real numbers. Assume that $\left\lbrace a_{k} \right\rbrace _{k \geq 1}$ is bounded from below, and $\left\lbrace b_{k} \right\rbrace _{k \geq 1}$ and $\left\lbrace d_{k} \right\rbrace _{k \geq 1}$ are nonnegative such that $\sum_{k \geq 1} d_{k} < + \infty$. Suppose further that for every $k \geq 1$ it holds
	\begin{equation}
	\label{quasi-Fej:inq}
	a_{k+1} \leq a_{k} - b_{k} + d_{k} .
	\end{equation}
	Then the following statements are true
	\begin{enumerate}
		\item the sequence $\left\lbrace b_{k} \right\rbrace _{k \geq 1}$ is summable, namely $\sum_{k \geq 1} b_{k} < + \infty$;
		\item the sequence $\left\lbrace a_{k} \right\rbrace _{k \geq 1}$ is convergent.
	\end{enumerate}
\end{lem}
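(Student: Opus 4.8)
The plan is to convert the quasi-Fej\'er inequality \eqref{quasi-Fej:inq} into a genuinely monotone recursion by absorbing the summable perturbation into the sequence. Since $\left\lbrace d_{k} \right\rbrace_{k \geq 1}$ is nonnegative and satisfies $\sum_{k \geq 1} d_{k} < +\infty$, the tail sums $\sigma_{k} := \sum_{j \geq k} d_{j}$ are well defined, nonnegative and non-increasing, satisfy $\sigma_{k} \to 0$ as $k \to +\infty$, and obey $d_{k} + \sigma_{k+1} = \sigma_{k}$ for every $k \geq 1$.

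Next I would set $c_{k} := a_{k} + \sigma_{k}$ for $k \geq 1$. Adding $\sigma_{k+1}$ to both sides of \eqref{quasi-Fej:inq} and using $d_{k} + \sigma_{k+1} = \sigma_{k}$ gives
\[
c_{k+1} = a_{k+1} + \sigma_{k+1} \leq a_{k} - b_{k} + d_{k} + \sigma_{k+1} = c_{k} - b_{k} \leq c_{k} \qquad \forall k \geq 1,
\]
since $b_{k} \geq 0$; hence $\left\lbrace c_{k} \right\rbrace_{k \geq 1}$ is non-increasing. Moreover $c_{k} = a_{k} + \sigma_{k} \geq a_{k} \geq \inf_{m \geq 1} a_{m} > -\infty$ because $\left\lbrace a_{k} \right\rbrace_{k \geq 1}$ is bounded from below and $\sigma_{k} \geq 0$, so $\left\lbrace c_{k} \right\rbrace_{k \geq 1}$ is bounded from below. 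A non-increasing real sequence that is bounded from below converges; let $c := \lim_{k \to +\infty} c_{k} \in \sR$. For statement (ii), from $a_{k} = c_{k} - \sigma_{k}$ and $\sigma_{k} \to 0$ we obtain $a_{k} \to c$, so $\left\lbrace a_{k} \right\rbrace_{k \geq 1}$ converges.

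For statement (i), the monotone estimate yields $0 \leq b_{k} \leq c_{k} - c_{k+1}$ for every $k \geq 1$, so telescoping gives $\sum_{k=1}^{N} b_{k} \leq c_{1} - c_{N+1}$ for every $N \geq 1$; letting $N \to +\infty$ and using $c_{N+1} \to c$ we conclude $\sum_{k \geq 1} b_{k} \leq c_{1} - c < +\infty$. There is no genuine obstacle here: the only step that requires a moment's thought is the choice of the correction term $\sigma_{k}$ that turns \eqref{quasi-Fej:inq} into the monotone recursion $c_{k+1} \leq c_{k} - b_{k}$, after which both conclusions are immediate. Alternatively one could simply invoke \cite[Lemma 5.31]{Bauschke-Combettes:book}, of which this is the announced particular instance.
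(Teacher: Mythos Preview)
Your proof is correct and complete. The paper itself does not supply a proof of this lemma at all: it simply records the statement and cites \cite[Lemma~5.31]{Bauschke-Combettes:book}, exactly the alternative you mention in your last sentence.
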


In order to establish the weak convergence of the iterates, we will use Opial’s Lemma in discrete form (see, for instance, \cite[Theorem 5.5]{Bauschke-Combettes:book}), which we recall as follows.
\begin{lem}
	\label{lem:Opial}
	Let ${\cal C}$ be a nonempty subset of ${\sH}$ and $\left\lbrace x_{k} \right\rbrace _{k \geq 1}$ a sequence in $\sH$. Assume that
	\begin{enumerate}
		\item for every $x_{*} \in {\cal C}$, $\lim\limits_{k \to + \infty} \left\lVert x_{k} - x_{*} \right\rVert$ exists;
		\item every weak sequential cluster point of $\left\lbrace x_{k} \right\rbrace _{k \geq 1}$ belongs to ${\cal C}$.
	\end{enumerate}
	Then the sequence $\left\lbrace x_{k} \right\rbrace _{k \geq 1}$ converges weakly to an element in ${\cal C}$ as $k \to + \infty$.
\end{lem}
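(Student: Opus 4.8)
The plan is to prove Opial's Lemma in the usual way, in three stages: boundedness of the sequence, uniqueness of its weak sequential cluster point, and the conclusion via the contrapositive. First I would fix an arbitrary $x_{*} \in \mathcal{C}$, which exists since $\mathcal{C}$ is nonempty. By hypothesis (i) the sequence $\left\lbrace \left\lVert x_{k} - x_{*} \right\rVert \right\rbrace_{k \geq 1}$ converges, hence is bounded, so $\left\lbrace x_{k} \right\rbrace_{k \geq 1}$ is bounded in $\sH$. Since $\sH$ is a Hilbert space and therefore reflexive, bounded sequences admit weakly convergent subsequences, so the set of weak sequential cluster points of $\left\lbrace x_{k} \right\rbrace_{k \geq 1}$ is nonempty; by hypothesis (ii) every such point belongs to $\mathcal{C}$.

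The crux is the uniqueness of the weak cluster point. Suppose $x, y \in \mathcal{C}$ are two weak sequential cluster points, say $x_{k_{n}} \rightharpoonup x$ and $x_{m_{n}} \rightharpoonup y$ as $n \to +\infty$. Expanding the squared norms via \eqref{pre:id-eq} gives, for every $k \geq 1$,
$$\left\lVert x_{k} - x \right\rVert^{2} - \left\lVert x_{k} - y \right\rVert^{2} = 2 \left\langle x_{k} , y - x \right\rangle + \left\lVert x \right\rVert^{2} - \left\lVert y \right\rVert^{2}.$$
Because $x, y \in \mathcal{C}$, hypothesis (i) ensures that both $\left\lVert x_{k} - x \right\rVert$ and $\left\lVert x_{k} - y \right\rVert$ converge, hence the left-hand side converges and so $\left\langle x_{k} , y - x \right\rangle$ converges to some real number $\ell$. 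Passing to the limit along $\left( x_{k_{n}} \right)_{n}$ yields $\ell = \left\langle x , y - x \right\rangle$, while passing along $\left( x_{m_{n}} \right)_{n}$ yields $\ell = \left\langle y , y - x \right\rangle$; subtracting gives $\left\lVert x - y \right\rVert^{2} = \left\langle x - y , x - y \right\rangle = 0$, so $x = y$.

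Finally I would conclude. Let $x_{\infty} \in \mathcal{C}$ denote the unique weak sequential cluster point. If $\left\lbrace x_{k} \right\rbrace_{k \geq 1}$ did not converge weakly to $x_{\infty}$, there would exist $z \in \sH$, $\varepsilon > 0$ and a subsequence with $\left\lvert \left\langle x_{k_{n}} - x_{\infty} , z \right\rangle \right\rvert \geq \varepsilon$ for all $n$; being bounded, this subsequence would have a further subsequence converging weakly to some $x'$, which by hypothesis (ii) lies in $\mathcal{C}$ and, by the same inequality, satisfies $x' \neq x_{\infty}$, contradicting uniqueness. Hence $x_{k} \rightharpoonup x_{\infty} \in \mathcal{C}$. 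The only genuinely delicate step is the uniqueness argument, where the key device is to play the two subsequential weak limits against the scalar sequence $\left\langle x_{k} , y - x \right\rangle$, whose convergence as a whole sequence (a consequence of hypothesis (i)) forces its subsequential limits to coincide; the remaining steps are routine functional-analytic bookkeeping.
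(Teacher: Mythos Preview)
Your proof is correct and is precisely the standard argument for Opial's Lemma. The paper does not actually prove this statement: it simply quotes it as \cite[Theorem 5.5]{Bauschke-Combettes:book} and uses it as a black box, so there is no ``paper's own proof'' to compare against.
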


\section{Continuous time approaches and their discrete counterparts}
\label{sec:ds}

In this section we want to derive by time discretization a primal-dual numerical algorithm from the second-order dynamical system investigated in \cite{Bot-Nguyen}. The employed discretization technique replicates the one used when relating fast gradient algorithms with the second-order dynamical system proposed by Su, Boyd and Cand\`es in \cite{Su-Boyd-Candes} in the unconstrained case.

\subsection{The primal-dual dynamical approach with vanishing damping}

The second-order primal-dual dynamical system with asymptotically vanishing damping term associated in \cite{Bot-Nguyen} with the augmented Lagrangian formulation of \eqref{intro:pb} reads
\begin{mdframed}	
	\begin{equation}
	\tag{$\mathrm{PD}$-$\mathrm{AVD}$}
	\label{ds:PD-AVD}
	\begin{dcases}
	\ddot{x} \left( t \right) + \dfrac{\alpha}{t} \dot{x} \left( t \right) + \nabla_{x} \Lb \Bigl( x \left( t \right) , \lambda \left( t \right) + \theta t \dot{\lambda} \left( t \right) \Bigr)  		& = 0 \\
	\ddot{\lambda} \left( t \right) + \dfrac{\alpha}{t} \dot{\lambda} \left( t \right) - \nabla_{\lambda} \Lb \Bigl( x \left( t \right) + \theta t \dot{x} \left( t \right) , \lambda \left( t \right) \Bigr) 	& = 0 \\
	\Bigl( x \left( t_{0} \right) , \lambda \left( t_{0} \right) \Bigr) 			= \Bigl( x_{0} , \lambda_{0} \Bigr) \textrm{ and }
	\Bigl( \dot{x} \left( t_{0} \right) , \dot{\lambda} \left( t_{0} \right) \Bigr) = \Bigl( \dot{x}_{0} , \dot{\lambda}_{0} \Bigr)
	\end{dcases},
	\end{equation}
\end{mdframed}
where $t_{0} >0, \alpha \geq 3, \beta \geq 0, \theta >0$ and $\left( x_{0}, \lambda_{0} \right), \left( \dot{x}_{0} , \dot{\lambda}_{0} \right) \in \sH \times \sG$.

Plugging the expressions of the partial gradients of $\Lb$ into the system leads to the following formulation for \eqref{ds:PD-AVD}
\begin{equation}
\label{ds:Cauchy}
\begin{dcases}
\ddot{x} \left( t \right) + \dfrac{\alpha}{t} \dot{x} \left( t \right) + \nabla f \left( x \left( t \right) \right) + A^{*} \left( \lambda \left( t \right) + \theta t \dot{\lambda} \left( t \right) \right) + \beta A^{*} \Bigl( Ax \left( t \right) - b \Bigr)		& = 0 \\
\ddot{\lambda} \left( t \right) + \dfrac{\alpha}{t} \dot{\lambda} \left( t \right) - \Bigl( A \bigl( x \left( t \right) + \theta t \dot{x} \left( t \right) \bigr) - b \Bigr) 	& = 0 \\
\Bigl( x \left( t_{0} \right) , \lambda \left( t_{0} \right) \Bigr) 			= \Bigl( x_{0} , \lambda_{0} \Bigr) \textrm{ and }
\Bigl( \dot{x} \left( t_{0} \right) , \dot{\lambda} \left( t_{0} \right) \Bigr) = \Bigl( \dot{x}_{0} , \dot{\lambda}_{0} \Bigr)
\end{dcases}.
\end{equation}
In \cite{Bot-Nguyen} it has been shown that, supposing that \begin{equation*}
	\alpha \geq 3 , \quad {\beta \geq 0}
	\quad \textrm{ and } \quad
	\dfrac{1}{2} \geq \theta \geq \dfrac{1}{\alpha - 1},
	\end{equation*}
for a solution $\left( x , \lambda \right) \colon \left[ t_{0} , + \infty \right) \to \sH \times \sG$  of \eqref{ds:PD-AVD} and $\left( x_{*} , \lambda_{*} \right) \in \sol$ it holds for every $t \geq t_0$ 
\begin{equation*}
				0 \leq \Lag \left( x \left( t \right) , \lambda_{*} \right) - \Lag \left( x_{*} , \lambda \left( t \right) \right) + \left\lVert A x \left( t \right) - b \right\rVert \leq \dfrac{\widehat{C}}{\theta^{2} t^{2}}
		\end{equation*}
and
		\begin{equation*}
		- \dfrac{\left\lVert \lambda_{*} \right\rVert \widehat{C}}{\theta^{2} t^{2}} \leq f \left( x \left( t \right) \right) - f_{*} \leq \dfrac{\left( 1 + \left\lVert \lambda_{*} \right\rVert \right) \widehat{C}}{\theta^{2} t^{2}},
		\end{equation*}
where $\widehat{C} >0$.

If, in addition, $\nabla f$ is $L-$Lipschitz continuous, $\alpha >3$ and $\dfrac{1}{2} > \theta > \dfrac{1}{\alpha - 1}$, then it holds
	\begin{equation}\label{ratesgrad}
	\left\lVert A^{*} \left( \lambda \left( t \right) - \lambda_{*} \right) \right\rVert
	= o \left( \dfrac{1}{\sqrt{t}} \right) \quad \mbox{and} \quad \left\lVert \nabla f \left( x \left( t \right) \right) - \nabla f \left( x_{*} \right) \right\rVert
	= o \left( \dfrac{1}{\sqrt{t}} \right) \textrm{ as } t \to + \infty
	\end{equation}
and, consequently,
	\begin{align*}
	\left\lVert \nabla_{x} \Lag \bigl( x \left( t \right) , \lambda \left( t \right) \bigr) \right\rVert
	& = \left\lVert \nabla f \left( x \left( t \right) \right) + A^{*} \lambda \left( t \right) \right\rVert
	= o \left( \dfrac{1}{\sqrt{t}} \right)
	& \textrm{ as } t \to + \infty,
	\end{align*}
whereas
	\begin{align*}
	\left\lVert \nabla_{\lambda} \Lag \bigl( x \left( t \right) , \lambda \left( t \right) \bigr) \right\rVert
	& = \left\lVert Ax \left( t \right) - b \right\rVert = \bO \left( \dfrac{1}{t^{2}} \right)
	& \textrm{ as } t \to + \infty.
	\end{align*}
By additionally requiring that $\beta >0$, it has been also proved in \cite{Bot-Nguyen}  that the trajectory $\bigl( x \left( t \right) , \lambda \left( t \right) \bigr)$ converges weakly to a primal-dual optimal solution of \eqref{intro:pb} as $t \to + \infty$.

\subsection{Fast gradient scheme: from continuous to discrete time}

We recall in this section for reader's convenience the connection between the second-order dynamical system by Su, Boyd and Cand\`{e}s (\cite{Su-Boyd-Candes}) and the fast gradient numerical methods formulated in \cite{Chambolle-Dossal, Attouch-Cabot:18} in the spirit of Nesterov's accelerated gradient algorithm (\cite{Nesterov:83}). To this end we consider the unconstrained optimization problem
\begin{equation}\label{unconstrained:pb}
\min\limits_{x \in \sH} f \left( x \right),
\end{equation}
where $f : \sH \rightarrow {\mathbb R}$ is a convex and Fr\'echet differentiable function with $L$-Lipschitz continuous gradient, for $L>0$.

The continuous time approach proposed in \cite{Su-Boyd-Candes} in connection with this optimization problem reads
\begin{equation}
\tag{$\mathrm{AVD}$}
\label{ds:AVD}
\ddot{x} \left( t \right) + \dfrac{\alpha}{t} \dot{x} \left( t \right) + \nabla f \left( x \left( t \right) \right)   = 0 ,
\end{equation}
where $t_{0} > 0$ and $\alpha \geq 3$. One can easily notice that for $A=0$ and $b=0$ the optimization problem \eqref{intro:pb} becomes \eqref{unconstrained:pb}, while \eqref{ds:PD-AVD} reduces to \eqref{ds:AVD}.

For every $t \geq t_{0}$, we define
\begin{equation*}
z \left( t \right) := x \left( t \right) + \dfrac{t}{\alpha - 1} \dot{x} \left( t \right) .
\end{equation*}
This leads to
\begin{align*}
\dot{z} \left( t \right)
& = \dot{x} \left( t \right) + \dfrac{1}{\alpha - 1} \dot{x} \left( t \right) + \dfrac{t}{\alpha - 1} \ddot{x} \left( t \right)
= \dfrac{t}{\alpha - 1} \ddot{x} \left( t \right) + \dfrac{\alpha}{\alpha - 1} \dot{x} \left( t \right) = - \dfrac{t}{\alpha - 1} \nabla f \left( x \left( t \right) \right)
\end{align*}
and \eqref{ds:AVD} can be written as a first-order ordinary differential equation
\begin{equation}
\label{ds:AVD:fo}
\begin{dcases}
\dot{z} \left( t \right) 	& = - \dfrac{t}{\alpha - 1} \nabla f \left( x \left( t \right) \right) \\
z \left( t \right) 			& = x \left( t \right) + \dfrac{t}{\alpha - 1} \dot{x} \left( t \right).
\end{dcases}
\end{equation}

Let $\sigma > 0$. For every $k \geq 1$ we take as time step
\begin{equation*}
\sigma_{k} := \sigma \left( 1 + \dfrac{\alpha - 1}{k} \right) > 0 ,
\end{equation*}
and set $\tau_{k} := \sqrt{\sigma_{k}} k = \sqrt{\sigma k \left( k + \alpha - 1 \right)} \approx \sqrt{\sigma} \left( k + 1 \right)$, $x \left( \tau_{k} \right) \approx x_{k+1}$ and $z \left( \tau_{k} \right) \approx z_{k+1}$. We ``approximate'' $\tau_k$ with $\sqrt{\sigma} \left( k + 1 \right)$ since it is closer to this value than to $\sqrt{\sigma} k$. This also explains why we consider
$x \left( \tau_{k} \right) \approx x_{k+1}$ and $z \left( \tau_{k} \right) \approx z_{k+1}$ instead of the seemingly more natural choices $x \left( \tau_{k} \right) \approx x_{k}$ and $z \left( \tau_{k} \right) \approx z_{k}$, respectively.

The implicit finite-difference scheme for \eqref{ds:AVD:fo} at time $t := \tau_{k}$ gives
\begin{equation*}
\begin{dcases}
\dfrac{z_{k+1} - z_{k}}{\sqrt{\sigma_{k}}} 	& = - \dfrac{\sqrt{\sigma_{k}} k}{\alpha - 1} \nabla f \left( y_{k} \right) \\
z_{k+1} 					& = x_{k+1} + \dfrac{\sqrt{\sigma_{k}} k}{\alpha - 1} \dfrac{x_{k+1} - x_{k}}{\sqrt{\sigma_{k}}}
\end{dcases}
\end{equation*}
or, equivalently,
\begin{equation}
\label{ds:im-fd}
\begin{dcases}
z_{k+1} - z_{k} 	& = - \sigma \left( 1 + \dfrac{\alpha - 1}{k} \right) \dfrac{k}{\alpha - 1} \nabla f \left( y_{k} \right) \\
z_{k+1} 					& = x_{k+1} + \dfrac{k}{\alpha - 1} \left( x_{k+1} - x_{k} \right),
\end{dcases}
\end{equation}
where the gradient $\nabla f$ is evaluated at the point $y_{k}$, which is to be determined as a suitable convex combination of $x_{k}$ and $z_{k}$ such that $x_{k+1} - y_{k} \to 0$ as $k \to + \infty$. Notice that, since $\nabla f$ is $L-$Lipschitz continuous, this implies that $\nabla f \left( x_{k+1} \right) - \nabla f \left( y_{k} \right) \to 0$ as $k \to + \infty$.

The second equation in \eqref{ds:im-fd} is equivalent with
\begin{equation*}
x_{k+1} = \dfrac{\alpha - 1}{k + \alpha - 1} z_{k+1} + \dfrac{k}{k + \alpha - 1} x_{k}
\end{equation*}
and consequently suggests the following choice for $y_k$
\begin{equation}
\label{dis:y:com}
y_{k} = \dfrac{\alpha - 1}{k + \alpha - 1} z_{k} + \dfrac{k}{k + \alpha - 1} x_{k} .
\end{equation}
From the second equation in \eqref{ds:im-fd} we further obtain
\begin{align*}
y_{k} & = \dfrac{\alpha - 1}{k + \alpha - 1} z_{k} + \dfrac{k}{k + \alpha - 1} x_{k} = \dfrac{\alpha - 1}{k + \alpha - 1} \left( x_{k} + \dfrac{k-1}{\alpha - 1} \left( x_{k} - x_{k-1} \right) \right) + \dfrac{k}{k + \alpha - 1} x_{k} \nonumber \\
& = x_{k} + \dfrac{k - 1}{k + \alpha - 1} \left( x_{k} - x_{k-1} \right) . \label{dis:y:x}
\end{align*}
In addition,
\begin{equation*}
z_{k+1} - z_{k} = \dfrac{k + \alpha - 1}{\alpha - 1} \left( x_{k+1} - y_{k} \right) = \left( 1 + \dfrac{k}{\alpha - 1} \right) \left( x_{k+1} - y_{k} \right).
\end{equation*}
Consequently, \eqref{ds:im-fd} can be equivalently written as
\begin{equation}
\label{ite:C-D}
 \begin{dcases}
y_{k} 		& := x_{k} + \dfrac{k - 1}{k + \alpha - 1} \left( x_{k} - x_{k-1} \right) \\
x_{k+1} 	& := y_{k} - \sigma \nabla f \left( y_{k} \right).
\end{dcases}
\end{equation}
This is nothing else than the algorithm considered by Chambolle and Dossal in \cite{Chambolle-Dossal} (see also \cite{Attouch-Peypouquet}).

Furthermore, if we write for every $k \geq 1$
\begin{equation}
\label{dis:t-k:C-D}
t_{k} := 1 + \dfrac{k-1}{\alpha - 1} = \dfrac{k + \alpha - 2}{\alpha - 1} ,
\end{equation}
so that
\begin{equation*}
t_{k+1} - 1 = \dfrac{k}{\alpha - 1}
\qquad \textrm{ and } \qquad
\dfrac{t_{k}-1}{t_{k+1}} = \dfrac{k-1}{k + \alpha - 1} ,
\end{equation*}
then \eqref{ite:C-D} becomes
\begin{equation}
\label{ite:Nes}
(\forall k \geq 1) \quad  \begin{dcases}
y_{k} 		& := x_{k} + \dfrac{t_{k}-1}{t_{k+1}} \left( x_{k} - x_{k-1} \right) \\
x_{k+1} 	& := y_{k} - \sigma \nabla f \left( y_{k} \right).
\end{dcases}
\end{equation}

Modifications of the sequence $\left\lbrace t_{k} \right\rbrace _{k \geq 1}$ which preserve its asymptotic behaviour lead to various acceleration schemes from the literature.

For instance, the classical Nesterov's accelerated gradient method (\cite{Nesterov:83}) is precisely \eqref{ite:Nes}, where the sequence $\left\lbrace t_{k} \right\rbrace _{k \geq 1}$ satisfies the recurrence rule
\begin{equation}
\label{algo:FISTA-Nesterov}
t_{1} := 1
\qquad \textrm{ and } \qquad
t_{k+1} := \dfrac{1 + \sqrt{1 + 4t_{k}^{2}}}{2} \quad \forall k \geq 1.
\end{equation}

Another example is the algorithm proposed by Attouch and Cabot in \cite{Attouch-Cabot:18} that corresponds to \eqref{ite:Nes} with the choice
\begin{equation}
\label{algo:Attouch-Cabot}
t_{k} := \dfrac{k-1}{\alpha - 1}  \quad \forall k \geq 1.
\end{equation}
It can also be interpreted as a discretization of \eqref{ds:AVD:fo} with time step
\begin{equation*}
\sigma_{k} := \dfrac{\sigma k}{k - \alpha + 1}  \quad \forall k \geq 1,
\end{equation*}
and by setting $\tau_{k} := \sqrt{\sigma_{k}} \left( k - \alpha + 1 \right) = \sqrt{\sigma k \left( k - \alpha + 1 \right)} \approx \sqrt{\sigma} \left( k + 1 \right), x \left( \tau_{k} \right) \approx x_{k+1}$ and $z \left( \tau_{k} \right) \approx z_{k+1}$.

\subsection{The time discretization of (\ref{ds:PD-AVD})}

In order to provide a useful time discretization of the dynamical system \eqref{ds:PD-AVD} we follow the approach of the previous section and define for every $t \geq t_{0}$
\begin{equation}
\label{dis:z-nu}
z \left( t \right) := x \left( t \right) + \dfrac{t}{\alpha - 1} \dot{x} \left( t \right)
\qquad \textrm{ and } \qquad
\nu \left( t \right) := \lambda \left( t \right) + \dfrac{t}{\alpha - 1} \dot{\lambda} \left( t \right) .
\end{equation}
Further, we set
\begin{equation}
\label{ds:gamma}
\gamma := \dfrac{1}{\theta \left( \alpha - 1 \right)} \in \left[ \dfrac{2}{\alpha - 1} , 1 \right].
\end{equation}
The parameter $\gamma$ will play an essential role in our analysis. For every $t \geq t_0$ we define
\begin{subequations}
	\begin{align}
	z^{\gamma} \left( t \right)
	& := \gamma \left( x \left( t \right) + \theta t \dot{x} \left( t \right) \right)
	= \gamma x \left( t \right) + \dfrac{t}{\alpha - 1} \dot{x} \left( t \right)
	= z \left( t \right) + \left( \gamma - 1 \right) x \left( t \right) , \\
	\nu^{\gamma} \left( t \right)
	& := \gamma \left( \lambda \left( t \right) + \theta t \dot{\lambda} \left( t \right) \right)
	= \gamma \lambda \left( t \right) + \dfrac{t}{\alpha - 1} \dot{\lambda} \left( t \right)
	= \nu \left( t \right) + \left( \gamma - 1 \right) \lambda \left( t \right) .
	\end{align}
\end{subequations}
Using these notations, the system \eqref{ds:PD-AVD}  (see also its equivalent formulation \eqref{ds:Cauchy}) can be written as
\begin{equation}
\label{ds:Cauchy-gamma}
\begin{dcases}
\ddot{x} \left( t \right) + \dfrac{\alpha}{t} \dot{x} \left( t \right) + \nabla f \left( x \left( t \right) \right) + \dfrac{1}{\gamma} A^{*} \nu^{\gamma} \left( t \right) + \beta A^{*} \Bigl( Ax \left( t \right) - b \Bigr)	& = 0 \\
\ddot{\lambda} \left( t \right) + \dfrac{\alpha}{t} \dot{\lambda} \left( t \right) - \dfrac{1}{\gamma} \Bigl( Az^{\gamma} \left( t \right) - \gamma b \Bigr) 	& = 0 \\
\Bigl( x \left( t_{0} \right) , \lambda \left( t_{0} \right) \Bigr) 			= \Bigl( x_{0} , \lambda_{0} \Bigr) \textrm{ and }
\Bigl( \dot{x} \left( t_{0} \right) , \dot{\lambda} \left( t_{0} \right) \Bigr) = \Bigl( \dot{x}_{0} , \dot{\lambda}_{0} \Bigr)
\end{dcases} .
\end{equation}
Using that for every $t \geq t_0$
\begin{align*}
\dot{z} \left( t \right) = \dfrac{t}{\alpha - 1} \left( \ddot{x} \left( x \right) + \dfrac{\alpha}{t} \dot{x} \left( t \right) \right) \ \mbox{and} \
\dot{\nu} \left( t \right) = \dfrac{t}{\alpha - 1} \left( \ddot{\lambda} \left( x \right) + \dfrac{\alpha}{t} \dot{\lambda} \left( t \right) \right) ,
\end{align*}
the first two lines in \eqref{ds:Cauchy-gamma} can be equivalently written as
\begin{equation}
\label{ds:Cauchy-gamma:fo}
\begin{dcases}
& \dot{z} \left( t \right)		= - \dfrac{t}{\alpha - 1} \nabla f \left( x \left( t \right) \right) - \dfrac{t}{\alpha - 1} \dfrac{1}{\gamma} A^{*} \nu^{\gamma} \left( t \right) - \dfrac{t}{\alpha - 1} \beta A^{*} \Bigl( Ax \left( t \right) - b \Bigr) \\
& \dot{\nu} \left( t \right) 	= \dfrac{1}{\gamma} \dfrac{t}{\alpha - 1} \Bigl( Az^{\gamma} \left( t \right) - \gamma b \Bigr) \\
& z \left( t \right) 			= x \left( t \right) + \dfrac{t}{\alpha - 1} \dot{x} \left( t \right)\\
& z^{\gamma} \left( t \right) 	= \gamma x \left( t \right) + \dfrac{t}{\alpha - 1} \dot{x} \left( t \right) \\
& \nu \left( t \right) 			= \lambda \left( t \right) + \dfrac{t}{\alpha - 1} \dot{\lambda} \left( t \right)\\
& \nu^{\gamma} \left( t \right) 	= \gamma \lambda \left( t \right) + \dfrac{t}{\alpha - 1} \dot{\lambda} \left( t \right).
\end{dcases}
\end{equation}

Let $\sigma, \rho > 0$. For every $k \geq 1$ we take for $x$ and $\lambda$ two different time steps
\begin{equation*}
\sigma_{k} := \sigma \left( 1 + \dfrac{\alpha - 1}{k} \right) >0
\qquad \textrm{ and } \qquad
\rho_{k} := \rho \left( 1 + \dfrac{\alpha - 1}{k} \right) >0,
\end{equation*}
respectively, and set $\tau_{k} := \sqrt{\sigma_{k}} k \approx \sqrt{\sigma} \left( k + 1 \right), x \left( \tau_{k} \right) \approx x_{k+1},z \left( \tau_{k} \right) \approx z_{k+1}, z^{\gamma} \left( \tau_{k} \right) \approx z_{k+1}^{\gamma}$, and $\tau_{k}' := \sqrt{\rho_{k}} k \approx \sqrt{\rho} \left( k + 1 \right), \lambda \left( \tau_{k}' \right) \approx \lambda_{k+1}, \nu \left( \tau_{k}' \right) \approx \nu_{k+1}$ and $\nu^{\gamma} \left( \tau_{k}' \right) \approx \nu_{k+1}^{\gamma}$.

The implicit finite-difference scheme for \eqref{ds:Cauchy-gamma:fo} at time $t := \tau_{k}$ for $x$ and time $t := \tau_{k}'$ for $\lambda$ gives
\begin{equation}
\label{dis:Cauchy-gamma:fd}
\begin{dcases}
\dfrac{z_{k+1} - z_{k}}{\sqrt{\sigma_{k}}} 	& = - \dfrac{\sqrt{\sigma_{k}} k}{\alpha - 1} \nabla f \left( y_{k} \right) - \dfrac{\sqrt{\sigma_{k}} k}{\alpha - 1} A^{*} \dfrac{1}{\gamma} \tnu_{k+1} - \dfrac{\sqrt{\sigma_{k}} k}{\alpha - 1} \beta A^{*} \left( Ay_{k} - b \right) \\
\dfrac{\nu_{k+1} - \nu_{k}}{\sqrt{\rho_{k}}} 	& = \dfrac{1}{\gamma} \dfrac{\sqrt{\rho_{k}} k}{\alpha - 1} \left( Az_{k+1}^{\gamma} - \gamma b \right) \\
z_{k+1} 					& = x_{k+1} + \dfrac{\sqrt{\sigma_{k}} k}{\alpha - 1} \dfrac{x_{k+1} - x_{k}}{\sqrt{\sigma_{k}}} \\
z_{k+1}^{\gamma} 			& = \gamma x_{k+1} + \dfrac{\sqrt{\sigma_{k}} k}{\alpha - 1} \dfrac{x_{k+1} - x_{k}}{\sqrt{\sigma_{k}}} \\
\nu_{k+1} 					& = \lambda_{k+1} + \dfrac{\sqrt{\sigma_{k}} k}{\alpha - 1} \dfrac{\lambda_{k+1} - \lambda_{k}}{\sqrt{\sigma_{k}}} \\
\nu_{k+1}^{\gamma} 			& = \gamma \lambda_{k+1} + \dfrac{\sqrt{\sigma_{k}} k}{\alpha - 1} \dfrac{\lambda_{k+1} - \lambda_{k}}{\sqrt{\sigma_{k}}},
\end{dcases}
\end{equation}
where $y_{k}$ and $\tnu_{k+1}$ will be chosen appropriately to obtain an easily implementable iterative scheme.
Notice that $\tnu_{k+1}$ must be an approximation of $\nu_{k+1}^{\gamma}$.

Once again we take as in the previous section (see \eqref{dis:y:com})
\begin{equation*}
y_{k} = \dfrac{\alpha - 1}{k + \alpha - 1} z_{k} + \dfrac{k}{k + \alpha - 1} x_{k} = x_k + \dfrac{k - 1}{k + \alpha - 1}(x_k - x_{k-1}),
\end{equation*}
which, by using the third equation in \eqref{dis:Cauchy-gamma:fd}, gives
\begin{equation*}
z_{k+1} - z_{k} = \dfrac{k + \alpha - 1}{\alpha - 1} \left( x_{k+1} - y_{k} \right) = \left( 1 + \dfrac{k}{\alpha - 1} \right) \left( x_{k+1} - y_{k} \right).
\end{equation*}
Following \eqref{dis:y:com} we set also for the sequence of dual variables
\begin{equation*}
\mu_{k}  = \dfrac{\alpha - 1}{k + \alpha - 1} \nu_{k} + \dfrac{k}{k + \alpha - 1} \lambda_{k} = \lambda_{k} + \dfrac{k - 1}{k + \alpha - 1} \left( \lambda_{k} - \lambda_{k-1} \right),
\end{equation*}
which, by using the fifth equation in \eqref{dis:Cauchy-gamma:fd}, gives
\begin{equation}
\label{eu1}
\nu_{k+1} - \nu_{k} = \dfrac{k + \alpha - 1}{\alpha - 1} \left( \lambda_{k+1} - \mu_{k} \right) = t_{k+1}\left( \lambda_{k+1} - \mu_{k} \right).
\end{equation}
For these choices, and by taking into consideration the definition of $\left\lbrace t_{k} \right\rbrace _{k \geq 1}$ in \eqref{dis:t-k:C-D},  \eqref{dis:Cauchy-gamma:fd} becomes
\begin{equation}
\begin{cases}
y_{k} 			& = x_{k} + \dfrac{t_{k} - 1}{t_{k+1}} \left( x_{k} - x_{k-1} \right) \\
x_{k+1} 		& = y_{k} - \sigma \nabla f \left( y_{k} \right) - \dfrac{\sigma}{\gamma} A^{*} \tnu_{k+1} - \sigma \beta A^{*} \left( Ay_{k} - b \right) \\
\mu_{k} 		& = \lambda_{k} + \dfrac{t_{k} - 1}{t_{k+1}} \left( \lambda_{k} - \lambda_{k-1} \right) \\
z_{k+1}^{\gamma} 	& = \gamma x_{k+1} + \left( t_{k+1} - 1 \right) \left( x_{k+1} - x_{k} \right) \\
\lambda_{k+1} 	& = \mu_{k} + \dfrac{\rho}{\gamma} \left( Az_{k+1}^{\gamma} - \gamma b \right)\\
\nu_{k+1}^{\gamma} 	& = \gamma \lambda_{k+1} + \left( t_{k+1} - 1 \right) \left(\lambda_{k+1} - \lambda_{k} \right), 
\end{cases} \label{dis:tnu}
\end{equation}
where $\tnu_{k+1}$ is still to be chosen such that $\tnu_{k+1} - \nu_{k+1}^{\gamma} \to 0$ as $k \to + \infty$. We will not opt for $\tnu_{k+1} = \nu_{k+1}^{\gamma}$ in order to avoid an implicit iterative scheme, but choose instead (see also \eqref{eu1})
\begin{align*}
\tnu_{k+1}
:= & \ \nu_{k+1}^{\gamma} + \left( 1 - \gamma \right) \left( \lambda_{k+1} - \lambda_{k} \right) =  \gamma \lambda_{k+1} + \left( t_{k+1} - \gamma \right) \left(\lambda_{k+1} - \lambda_{k} \right) =  \gamma \lambda_{k} + t_{k+1} \left(\lambda_{k+1} - \lambda_{k} \right)\\
= & \ \gamma \lambda_{k} + \left( t_{k} - 1 \right) \left(\lambda_{k} - \lambda_{k-1} \right) + t_{k+1} \left(\lambda_{k+1} - \lambda_{k} - \frac{t_k-1}{t_{k+1}} \left(\lambda_{k} - \lambda_{k-1} \right)  \right)\\
 = & \ \nu_{k}^{\gamma} + t_{k+1}\left( \lambda_{k+1} - \mu_{k} \right) = \nu_{k}^{\gamma} + \nu_{k+1} - \nu_{k} = \nu_{k}^{\gamma} + \dfrac{\rho}{\gamma} t_{k+1} \left( Az_{k+1}^{\gamma} - \gamma b \right) \nonumber \\
 = & \ \nu_{k}^{\gamma} + \dfrac{\rho}{\gamma} t_{k+1} \left( \left( t_{k+1} - 1 + \gamma \right) Ax_{k+1} - \left( t_{k+1} - 1 \right) Ax_{k} - \gamma b \right) \nonumber \\
 = & \ \nu_{k}^{\gamma} + \dfrac{\rho}{\gamma} t_{k+1} \left( t_{k+1} - 1 + \gamma \right) \left( Ax_{k+1} - \dfrac{1}{t_{k+1} - 1 + \gamma} \left( \left( t_{k+1} - 1 \right) Ax_{k} + \gamma b \right) \right) ,
\end{align*}
Such a choice is reasonable as long as $\lambda_{k+1} - \lambda_{k} \to 0$ as $k \to + \infty$, which will then imply that $\tnu_{k+1} - \nu_{k+1}^{\gamma} \to 0$ as $k \to + \infty$. By setting
\begin{align*}
s_{k+1} 	:= \dfrac{\rho}{\gamma} t_{k+1} \left( t_{k+1} - 1 + \gamma \right) \ \mbox{and} \
\eta_{k} 	:= \dfrac{1}{t_{k+1} - 1 + \gamma} \left( \left( t_{k+1} - 1 \right) Ax_{k} + \gamma b \right),
\end{align*}
the second line in \eqref{dis:tnu} becomes
\begin{equation*}
x_{k+1} = y_{k} - \sigma \nabla f \left( y_{k} \right) - \dfrac{\sigma}{\gamma} A^{*} \left( \nu_{k}^{\gamma} + s_{k+1} \left( Ax_{k+1} - \eta_{k} \right) \right) - \sigma \beta A^{*} \left( Ay_{k} - b \right) 
\end{equation*}
or, equivalently,
\begin{align*}
x_{k+1} + \dfrac{\sigma}{\gamma} s_{k+1} A^{*} Ax_{k+1}
& = \left( \Id +  \dfrac{\sigma}{\gamma} s_{k+1} A^{*} A \right) x_{k+1} \nonumber \\
& = y_{k} - \sigma \nabla f \left( y_{k} \right) - \dfrac{\sigma}{\gamma} A^{*} \nu_{k}^{\gamma} + \dfrac{\sigma}{\gamma} s_{k+1} A^{*} \eta_{k} - \sigma \beta A^{*} \left( Ay_{k} - b \right).
\end{align*}
After rearranging the order in which the sequences are updated,  \eqref{dis:tnu} leads to the fast Augmented Lagrangian Method which we propose in this paper,  and also investigate from the point of view of its convergence properties.

\section{Fast Augmented Lagrangian Method}	

In this section we will give a precise formulation of the Augmented Lagrangian Method  for solving \eqref{intro:pb} and prove that it exhibits convergence rates of order $\bO \left( 1/k^{2} \right)$ for the primal-dual gap, the feasibility measure, and the objective function value.

\subsection{The algorithm}	

\begin{mdframed}
	\begin{algo}
		\label{algo:fast}
		Let $\beta \geq 0$ and $m , \gamma, \rho, \sigma > 0$ be such that
		\begin{equation}
		\label{assume:sigma}
		0 < m \leq \gamma \leq 1
		\qquad \textrm{ and } \qquad
		0 < \sigma \leq \dfrac{\gamma}{L + \gamma \beta \left\lVert A \right\rVert ^{2}} .
		\end{equation}
		Let $\left\lbrace t_{k} \right\rbrace _{k \geq 1}$ be a nondecreasing sequence such that
		\begin{equation}
		\label{assume:t-k+}
		t_{1} := 1 \qquad \textrm{ and } \qquad t_{k+1}^{2} - mt_{k+1} \leq t_{k}^{2} \quad \forall k \geq 1 .
		\end{equation}
		Given $x_{0} = x_{1} \in \sH$ and $\lambda_{0} = \lambda_{1} \in \sG$, for every $k \geq 1$ we set
		\begin{subequations}
			\label{algo:all}
			\begin{align}
			y_{k}				& := x_{k} + \dfrac{t_{k}-1}{t_{k+1}} \left( x_{k} - x_{k-1} \right) , \label{algo:y} \\
			\mu_{k}				& := \lambda_{k} + \dfrac{t_{k}-1}{t_{k+1}} \left( \lambda_{k} - \lambda_{k-1} \right) , \label{algo:mu} \\
			\eta_{k}			& := Ax_k + \dfrac{\gamma}{t_{k+1} - 1 + \gamma} (b-Ax_k), \label{algo:eta} \\			
			\nu_{k}^{\gamma}	& := \gamma \lambda_{k} + \left( t_{k} - 1 \right) \left( \lambda_{k} - \lambda_{k-1} \right) , \label{algo:nu-gamma} \\
			s_{k+1}				& := \dfrac{\rho}{\gamma} t_{k+1} \left( t_{k+1} - 1 + \gamma \right) , \label{algo:s} \\
			x_{k+1}				& := \arg\min_{x \in \sH} \left\lbrace \left\langle \nabla f \left( y_{k} \right) + \beta A^{*} \left( Ay_{k} - b \right) , x - y_{k} \right\rangle + \dfrac{1}{\gamma} \left\langle \nu_{k}^{\gamma} , Ax - b \right\rangle \right. \nonumber \\
			& \qquad\qquad\qquad\qquad\qquad\qquad \left. + \dfrac{1}{2 \gamma} s_{k+1} \left\lVert Ax - \eta_{k} \right\rVert ^{2} + \dfrac{1}{2 \sigma} \left\lVert x - y_{k} \right\rVert ^{2} \right\rbrace , \label{algo:x} \\
			z_{k+1}^{\gamma}	& := \gamma x_{k+1} + \left( t_{k+1} - 1 \right) \left( x_{k+1} - x_{k} \right) , \label{algo:z-gamma} \\
			\lambda_{k+1}		& := \mu_{k} + \dfrac{\rho}{\gamma} \left( Az_{k+1}^{\gamma} - \gamma b \right) . \label{algo:lambda}
			\end{align}
		\end{subequations}
	\end{algo}
\end{mdframed}

One can notice that Algorithm \ref{algo:fast} can be written in a concise way only in terms of the sequences of primal-dual iterates $\left\lbrace \left( x_{k} , \lambda_{k} \right) \right\rbrace _{k \geq 0}$, however, this elaborated formulation using auxiliary sequences is more convenient for its analysis.

Even though the choice $\gamma=1$ would give a simplified version of Algorithm \ref{algo:fast}, without affecting its fast convergence properties, we will see that in order to guarantee the convergence of $\left\lbrace \left( x_{k} , \lambda_{k} \right) \right\rbrace _{k \geq 0}$ to a primal-dual optimal solution it will be crucial to choose $\gamma \in (0,1)$. A similar phenomenon is known from the continuous and discrete schemes in the unconstrained case, where fast convergence rates have been obtained for $\alpha \geq 3$, while the convergence of the sequence of iterates/trajectory could be shown only for $\alpha >3$. In view of \eqref{ds:gamma}, in order to be allowed to choose $\gamma \in (0,1)$, one must have $\alpha >3$.

\begin{ex} {\bf (The case $A=0$ and $b=0$)}
If $A=0$ and $b=0$, then \eqref{intro:pb} becomes the unconstrained optimization problems \eqref{unconstrained:pb} and Algorithm \ref{algo:fast} reduces to the well-known accelerated gradient scheme which, given  $0 < \sigma \leq \dfrac{1}{L}$, $\{t_k\}_{k \geq 1}$ a nondecreasing sequence fulfilling \eqref{assume:t-k+} and $x_{0} = x_{1} \in \sH$, reads for every $k \geq 1$
	\begin{align*}
	y_{k} 	& := x_{k} + \dfrac{t_{k}-1}{t_{k+1}} \left( x_{k} - x_{k-1} \right) \nonumber \\
	x_{k+1}	& := y_{k} - \sigma \nabla f \left( y_{k} \right),
	\end{align*}
as the dual sequence $\left\lbrace \lambda_{k} \right\rbrace_{k \geq 0}$ can be neglected.
\end{ex}

\begin{rmk} \label{rmk:z}
By denoting for every $k \geq 1$
	\begin{subequations}
		\begin{align}
		z_{k}
		& := x_{k} + \left( t_{k} - 1 \right) \left( x_{k} - x_{k-1} \right) \label{eq:z-x} \\	
		& = x_{k} + t_{k+1} \left( y_{k} - x_{k} \right) \label{eq:z-x-y} \\
		& = t_{k} x_{k} - \left( t_{k} - 1 \right) x_{k-1}, \label{algo:z}
		\end{align}
	\end{subequations}
it yields	
	\begin{align}
	y_{k} = \left( 1 - \dfrac{1}{t_{k+1}} \right) x_{k} + \dfrac{1}{t_{k+1}} z_{k}= \left( 1 - \dfrac{1}{t_{k+1}} \right) x_{k} + \dfrac{1}{t_{k+1}} \left( x_{k} + \left( t_{k} - 1 \right) \left( x_{k} - x_{k-1} \right) \right). \label{eq:y}
	\end{align}
	On the other hand, \eqref{algo:z} with index $k+1$ reads
	\begin{equation}
	\label{eq:z-1}
	z_{k+1} = t_{k+1} x_{k+1} - \left( t_{k+1} - 1 \right) x_{k} ,
	\end{equation}
	which is equivalent to
	\begin{equation}
	\label{eq:x}
	x_{k+1} = \left( 1 - \dfrac{1}{t_{k+1}} \right) x_{k} + \dfrac{1}{t_{k+1}} z_{k+1} .
	\end{equation}
	Subtracting \eqref{eq:y} from \eqref{eq:x}, we obtain
	\begin{equation}
	\label{eq:x-y-z}
	x_{k+1} - y_{k} = \dfrac{1}{t_{k+1}} \left( z_{k+1} - z_{k} \right) .
	\end{equation}
Furthermore, by the definition of $z_{k}^{\gamma}$ and $z_{k}$ in \eqref{algo:z-gamma} and \eqref{eq:z-x}, it holds
	\begin{equation*}
	z_{k}^{\gamma} = z_{k} + \left( \gamma - 1 \right) x_{k},
	\end{equation*}
which leads to
	\begin{subequations}
		\begin{align}
		z_{k+1}^{\gamma} - z_{k}^{\gamma}
		& = z_{k+1} - z_{k} + \left( \gamma - 1 \right) \left( x_{k+1} - x_{k} \right) \label{eq:dz:z-x} \\
		& = t_{k+1} \left( x_{k+1} - y_{k} \right) + \left( \gamma - 1 \right) \left( x_{k+1} - x_{k} \right) \label{eq:dz:x-x} .
		\end{align}	
	\end{subequations}

By a similar argument, denoting for every $k \geq 1$
\begin{align}
		\nu_{k}
		& := \lambda_{k} + \left( t_{k} - 1 \right) \left(\lambda_{k} - \lambda_{k-1} \right) \label{eq:nu-lambda}, 
		\end{align}
we can derive that
	\begin{equation}
	\label{eq:nu-gamma-lambda}
\nu_{k+1}^{\gamma} - \nu_{k}^{\gamma} = t_{k+1} \left( \lambda_{k+1} - \mu_{k} \right) +  \left( \gamma- 1 \right) \left( \lambda_{k+1} - \lambda_{k} \right) \ \mbox{and} \ \lambda_{k+1} - \mu_{k} = \dfrac{1}{t_{k+1}} \left(\nu_{k+1} - \nu_{k} \right).
	\end{equation}
\end{rmk}

\begin{ex}  {\bf (The choice $\gamma=1$)} In case $\gamma=1$ we denote $z_k:=z_k^1$ and $\nu_k:=\nu_k^1$ for every $k \geq 1$, which is consistent with the notations in the remark above.  Given $0 < \sigma \leq \dfrac{1}{L + \beta\|A\|^2}$, $\{t_k\}_{k \geq 1}$ a nondecreasing sequence fulfilling \eqref{assume:t-k+}  $x_{0} = x_{1} \in \sH$ and $\lambda_{0} = \lambda_{1} \in \sG$, Algorithm \ref{algo:fast} simplifies for every $k \geq 1$ to
	\begin{subequations}
		\begin{align*}
		y_{k}				& := x_{k} + \dfrac{t_{k}-1}{t_{k+1}} \left( x_{k} - x_{k-1} \right) , \\
	         \mu_{k} 		& := \lambda_{k} + \dfrac{t_{k}-1}{t_{k+1}} \left( \lambda_{k} - \lambda_{k-1} \right) , \\
		\eta_{k}			& := \left( 1 - \dfrac{1}{t_{k+1}} \right) Ax_{k} + \dfrac{1}{t_{k+1}} b , \\	
\nu_{k} & := \lambda_{k} + \left( t_{k} - 1 \right) \left( \lambda_{k} - \lambda_{k-1} \right),\\   
		x_{k+1}				& := \arg\min_{x \in \sH} \bigg \{ \left\langle \nabla f \left( y_{k} \right) + \beta A^{*} \left( Ay_{k} - b \right) , x - y_{k} \right\rangle + \left\langle \nu_{k} , Ax - b \right\rangle  \nonumber \\
		& \qquad\qquad\qquad\qquad\qquad\qquad \left. + \dfrac{1}{2} \rho t_{k+1}^{2} \left\lVert Ax - \eta_{k} \right\rVert ^{2}  + \dfrac{1}{2 \sigma} \left\lVert x - y_{k} \right\rVert ^{2} \right\rbrace , \\
       \lambda_{k+1}		& := \mu_{k} + \rho  \left( Ax_{k+1} - b + \left( t_{k+1} - 1 \right) A \left( x_{k+1} - x_{k} \right) \right).
		\end{align*}
	\end{subequations}
The fact that this iterative scheme exhibits fast convergence rates for the primal-dual gap, the feasibility measure, and the objective function value will follow from the analysis we will do for Algorithm \ref{algo:fast}. However,  nothing can be said about the convergence of the primal-dual iterates. To this end we will have to assume that $\gamma \in (0,1)$, which will be a crucial assumption.
\end{ex}

\begin{rmk}
He, Hu and Fang have considered in \cite{He-Hu-Fang} for $\alpha > 3$, $\sigma, \sigma' > 0$ and 
	\begin{equation*}
	t_{k} := 1 + \dfrac{k-2}{\alpha - 1} \quad \forall k \geq 1 
	\end{equation*}
the following iterative scheme which reads for every $k \geq 1$
		\begin{align*}		
		y_{k}			& := x_{k} + \dfrac{t_{k}-1}{t_{k+1}} \left( x_{k} - x_{k-1} \right) , \\
		\mu_{k} 		& := \lambda_{k} + \dfrac{t_{k}-1}{t_{k+1}} \left( \lambda_{k} - \lambda_{k-1} \right) , \\
		\eta_{k}			& := \left( 1 - \dfrac{1}{t_{k+1}} \right) Ax_{k} + \dfrac{1}{t_{k+1}} b , \\	
\nu_{k} & := \lambda_{k} + \left( t_{k} - 1 \right) \left( \lambda_{k} - \lambda_{k-1} \right),\\   
		x_{k+1}			& := \arg\min\limits_{x \in \sH} \left\lbrace \left\langle \nabla f \left( y_{k} \right) , x - y_{k} \right\rangle + \left\langle \nu_{k} , Ax - b \right\rangle + \dfrac{\sigma \left( t_{k+2} - 1 \right) t_{k+1}}{2} \left\lVert Ax - \eta_{k} \right\rVert ^{2} \right. \nonumber \\
		& \qquad\qquad\qquad\qquad\qquad\qquad\qquad \qquad\qquad\qquad \qquad \left.  + \dfrac{\sigma ' t_{k+1}}{\sigma \left( t_{k+2} - 1 \right)} \left\lVert x - y_{k} \right\rVert ^{2} \right\rbrace ,\\
		\lambda_{k+1}	& := \mu_{k} + \dfrac{\sigma \left( t_{k+2} - 1 \right)}{t_{k+1}} \left( Ax_{k+1} - b + \left( t_{k+1} - 1 \right) A \left( x_{k+1} - x_{k} \right) \right). 
		\end{align*}
This algorithm differs from Algorithm \ref{algo:fast} for the choice $\gamma =1$ (as formulated in the above example) in the way the primal-dual iterates $\left\lbrace \left( x_{k} , \lambda_{k} \right) \right\rbrace _{k \geq 0}$ are defined. The formulation of the first allows a more direct derivation of the fast convergence rates for the feasibility measure and the objective function value. The convergence of $\left\lbrace \left( x_{k} , \lambda_{k} \right) \right\rbrace _{k \geq 0}$ has been not addressed in \cite{He-Hu-Fang}, and it is by far not clear whether this sequence converges.
\end{rmk}

The following lemma collects some properties of the sequence $\{t_k\}_{k \geq 1}$ fulfilling \eqref{assume:t-k+}.

\begin{lem}
	\label{lem:lim}
	Let $0 < m \leq 1$ and $\left\lbrace t_{k} \right\rbrace _{k \geq 1}$ a nondecreasing sequence fulfilling
\begin{equation*}
		t_{1} := 1 \qquad \textrm{ and } \qquad t_{k+1}^{2} - mt_{k+1} \leq t_{k}^{2} \quad \forall k \geq 1 .
		\end{equation*}
Then for every $k \geq 1$ it holds
	\begin{equation}
	\label{bound:t-k}
	t_{k+1} - t_{k} \leq \varphi_{m} := \dfrac{m - 2 + \sqrt{m^{2} + 4}}{2} \leq \dfrac{\sqrt{5} - 1}{2} < 1,
	\end{equation}
	\begin{equation}
	\label{bound:k}
	t_{k+1} \leq \left( 1 + \varphi_{m} \right) t_{k}
	\qquad \textrm{ and } \qquad
	t_{k+1} \leq 1 + k \varphi_{m} \leq \left( 1 + \varphi_{m} \right) k .
	\end{equation}
\end{lem}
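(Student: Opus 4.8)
The plan is to treat the two displayed inequalities \eqref{bound:t-k} and \eqref{bound:k} in sequence, the first being the engine for the rest. First I would analyze the recurrence $t_{k+1}^{2} - m t_{k+1} \le t_{k}^{2}$ to bound the forward increment $d_{k} := t_{k+1} - t_{k} \ge 0$. Writing $t_{k+1} = t_{k} + d_{k}$ and substituting gives $(t_{k} + d_{k})^{2} - m(t_{k}+d_{k}) \le t_{k}^{2}$, i.e. $d_{k}^{2} + (2 t_{k} - m) d_{k} - m t_{k} \le 0$. For fixed $t_{k}$ this is a quadratic in $d_{k}$ that is nonpositive exactly on an interval $[0, r(t_{k})]$, where $r(t_{k})$ is the positive root; hence $d_{k} \le r(t_{k})$. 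The key monotonicity observation is that $r(t)$ is a \emph{decreasing} function of $t$ (the larger $t$ is, the smaller the admissible step), so since $t_{k} \ge t_{1} = 1$ for all $k$ by monotonicity of the sequence, we get $d_{k} \le r(t_{k}) \le r(1)$. A direct computation of $r(1)$ from the quadratic $d^{2} + (2-m)d - m \le 0$ yields $r(1) = \dfrac{m - 2 + \sqrt{(2-m)^{2} + 4m}}{2} = \dfrac{m - 2 + \sqrt{m^{2} + 4}}{2} = \varphi_{m}$, which is exactly the claimed bound. The chain $\varphi_{m} \le \varphi_{1} = \dfrac{\sqrt{5}-1}{2} < 1$ follows because $\varphi_{m}$ is increasing in $m \in (0,1]$ (both $m$ and $\sqrt{m^{2}+4}$ increase with $m$), and the golden-ratio-conjugate value at $m=1$ is standard.

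For \eqref{bound:k}, the second estimate $t_{k+1} \le 1 + k\varphi_{m}$ is immediate by telescoping: $t_{k+1} = t_{1} + \sum_{j=1}^{k}(t_{j+1}-t_{j}) \le 1 + k\varphi_{m}$, using $t_{1}=1$ and the increment bound just established. Then $1 + k\varphi_{m} \le (1+\varphi_{m})k$ holds for all $k \ge 1$ since $1 \le k$ and $\varphi_{m} \le \varphi_{m} k$, so $1 + k\varphi_{m} \le k + k\varphi_{m}$. For the first estimate $t_{k+1} \le (1+\varphi_{m}) t_{k}$, I would write $t_{k+1} = t_{k} + d_{k} \le t_{k} + \varphi_{m} \le t_{k} + \varphi_{m} t_{k} = (1+\varphi_{m}) t_{k}$, again using $t_{k} \ge 1$.

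The only genuinely delicate point is the monotonicity of the root function $r(t)$ — establishing that the maximal admissible step shrinks as $t$ grows — since everything else is routine algebra and telescoping. Concretely, $r(t) = \dfrac{(m - 2t) + \sqrt{(2t-m)^{2} + 4mt}}{2}$; one checks $r'(t) < 0$ either by differentiating directly, or more cleanly by rewriting $r(t) = \dfrac{2mt}{(2t - m) + \sqrt{(2t-m)^{2} + 4mt}}$ (rationalizing the numerator) and observing that the denominator grows faster than linearly in $t$ while the numerator is linear, so the quotient decreases. Once that monotonicity is in hand, the whole lemma falls out.
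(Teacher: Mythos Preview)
Your proposal is correct and follows essentially the same route as the paper. Note that the discriminant in your quadratic simplifies, $(2t_{k}-m)^{2}+4mt_{k}=m^{2}+4t_{k}^{2}$, so your root function is $r(t)=\dfrac{m-2t+\sqrt{m^{2}+4t^{2}}}{2}$, which is exactly the function $\psi(t)=\dfrac{m+\sqrt{m^{2}+4t^{2}}}{2}-t$ the paper introduces after solving the quadratic for $t_{k+1}$ directly; the monotonicity check $\psi'(s)=\dfrac{2s}{\sqrt{m^{2}+4s^{2}}}-1<0$ is then immediate. One small caution: your heuristic for monotonicity via the rationalized form (``denominator grows faster than linearly while the numerator is linear'') is not quite right, since both are asymptotically linear in $t$; stick with the direct differentiation you also mention, which is clean and matches the paper.
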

\begin{proof}	
Let $k \geq 1$. From the assumption we have that
	\begin{equation*}
	1 \leq t_{k+1} \leq \dfrac{m + \sqrt{m^{2} + 4t_{k}^{2}}}{2},
	\end{equation*}
which further gives
	\begin{equation*}
	t_{k+1} - t_{k} 	
	\leq \dfrac{m + \sqrt{m^{2} + 4t_{k}^{2}}}{2} - t_{k} .
	\end{equation*}
We define the function $\psi : [ 1 , + \infty) \to\sR$ as $s \mapsto \dfrac{m + \sqrt{m^{2} + 4s^{2}}}{2} - s$. Since
	\begin{equation*}
	\psi ' \left( s \right) = \dfrac{2s}{\sqrt{m^{2} + 4s^{2}}} - 1 < 0,
	\end{equation*}
$\psi$ is nonincreasing, consequently
	\begin{equation*}
	t_{k+1} - t_{k} 	
	\leq \psi \left( 1 \right) = \dfrac{m + \sqrt{m^{2} + 4}}{2} - 1 = \varphi_{m} \leq \dfrac{\sqrt{5} - 1}{2}.
	\end{equation*}
The statements in \eqref{bound:k} follow from the fact that $t_{k} \geq 1$ for every $k \geq 1$ and $\varphi_m \geq 0$ and by telescoping arguments, respectively.
\end{proof}

\subsection{Some important estimates and an energy function}

In this section we will provide some important estimates which will be useful when proving that the sequence of values of a discrete energy function, which we will associate with Algorithm \ref{algo:fast}, takes at a saddle point is nonincreasing.

\begin{lem}
	\label{lem:smooth}
	Let $\left\lbrace \left( x_{k} , \lambda_{k} \right) \right\rbrace _{k \geq 0}$ be the sequence generated by Algorithm \ref{algo:fast}. Then for every $x \in \sH$ and every $k \geq 1$ the following inequality holds
	\begin{align}
	f \left( x_{k+1} \right)
	 \leq & \ f \left( x \right) - \dfrac{1}{\gamma} \left\langle \nu_{k+1}^{\gamma} , Ax_{k+1} - Ax \right\rangle + \dfrac{1}{\gamma} \left( 1 - \gamma \right) \left\langle \lambda_{k} - \lambda_{k+1} , Ax_{k+1} - Ax \right\rangle \nonumber \\
	& \ + \dfrac{1}{\sigma} \left\langle y_{k} - x_{k+1} , x_{k+1} - x \right\rangle - \beta \left\langle Ay_{k} - b , Ax_{k+1} - Ax \right\rangle \nonumber \\
	& \ + \dfrac{L}{2} \left\lVert x_{k+1} - y_{k} \right\rVert ^{2} - \dfrac{1}{2L} \left\lVert \nabla f \left( y_{k} \right) - \nabla f \left( x \right) \right\rVert ^{2} . \label{smooth:inq}
	\end{align}
\end{lem}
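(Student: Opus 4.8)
The plan is to read off the optimality condition for the minimization subproblem \eqref{algo:x} that defines $x_{k+1}$, and then convert it into the desired inequality by combining it with the convexity of $f$ sharpened by the Descent Lemma \eqref{pre:f-bound}. First I would write the first-order optimality condition at $x_{k+1}$: since the objective in \eqref{algo:x} is smooth and strongly convex in $x$, setting its gradient to zero yields
\begin{equation*}
\nabla f \left( y_{k} \right) + \beta A^{*} \left( Ay_{k} - b \right) + \dfrac{1}{\gamma} A^{*} \nu_{k}^{\gamma} + \dfrac{1}{\gamma} s_{k+1} A^{*} \left( Ax_{k+1} - \eta_{k} \right) + \dfrac{1}{\sigma} \left( x_{k+1} - y_{k} \right) = 0 .
\end{equation*}
The next step is to pair this identity with $x_{k+1} - x$ for an arbitrary $x \in \sH$, which produces, after moving terms around,
\begin{equation*}
\left\langle \nabla f \left( y_{k} \right) , x_{k+1} - x \right\rangle = - \beta \left\langle Ay_{k} - b , Ax_{k+1} - Ax \right\rangle - \dfrac{1}{\gamma} \left\langle \nu_{k}^{\gamma} + s_{k+1} \left( Ax_{k+1} - \eta_{k} \right) , Ax_{k+1} - Ax \right\rangle + \dfrac{1}{\sigma} \left\langle y_{k} - x_{k+1} , x_{k+1} - x \right\rangle .
\end{equation*}

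Then I would bound $f \left( x_{k+1} \right)$ from above. Using convexity of $f$ together with the first and last inequalities in \eqref{pre:f-bound}: write $f \left( x_{k+1} \right) - f \left( y_{k} \right) \leq \left\langle \nabla f \left( y_{k} \right) , x_{k+1} - y_{k} \right\rangle + \frac{L}{2} \left\lVert x_{k+1} - y_{k} \right\rVert^{2}$ (Descent Lemma), and $f \left( y_{k} \right) - f \left( x \right) \leq \left\langle \nabla f \left( y_{k} \right) , y_{k} - x \right\rangle - \frac{1}{2L} \left\lVert \nabla f \left( y_{k} \right) - \nabla f \left( x \right) \right\rVert^{2}$ (the strengthened subgradient inequality, valid because $\nabla f$ is $L$-Lipschitz). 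Adding these two gives
\begin{equation*}
f \left( x_{k+1} \right) \leq f \left( x \right) + \left\langle \nabla f \left( y_{k} \right) , x_{k+1} - x \right\rangle + \dfrac{L}{2} \left\lVert x_{k+1} - y_{k} \right\rVert^{2} - \dfrac{1}{2L} \left\lVert \nabla f \left( y_{k} \right) - \nabla f \left( x \right) \right\rVert^{2} ,
\end{equation*}
and I would substitute the expression for $\left\langle \nabla f \left( y_{k} \right) , x_{k+1} - x \right\rangle$ from the optimality condition.

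What remains is purely algebraic: I must show that the term $-\frac{1}{\gamma} \left\langle \nu_{k}^{\gamma} + s_{k+1} \left( Ax_{k+1} - \eta_{k} \right) , Ax_{k+1} - Ax \right\rangle$ equals $-\frac{1}{\gamma} \left\langle \nu_{k+1}^{\gamma} , Ax_{k+1} - Ax \right\rangle + \frac{1}{\gamma}(1-\gamma) \left\langle \lambda_{k} - \lambda_{k+1} , Ax_{k+1} - Ax \right\rangle$. This is where the bookkeeping lives, and it is the step I expect to be the main obstacle — not because it is deep, but because it requires carefully unwinding the definitions. The key identity to establish is
\begin{equation*}
\nu_{k}^{\gamma} + s_{k+1} \left( Ax_{k+1} - \eta_{k} \right) = \nu_{k+1}^{\gamma} + \left( 1 - \gamma \right) \left( \lambda_{k+1} - \lambda_{k} \right) ,
\end{equation*}
i.e.\ that $\nu_{k}^{\gamma} + s_{k+1} \left( Ax_{k+1} - \eta_{k} \right)$ is exactly the quantity $\tnu_{k+1}$ from the derivation in Section~\ref{sec:ds}. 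To see this I would use the definition of $\eta_{k}$ in \eqref{algo:eta}, which gives $Ax_{k+1} - \eta_{k} = \frac{1}{t_{k+1}-1+\gamma}\left( \left( t_{k+1} - 1 + \gamma \right) Ax_{k+1} - \left( t_{k+1} - 1 \right) Ax_{k} - \gamma b \right) = \frac{1}{t_{k+1}-1+\gamma}\left( Az_{k+1}^{\gamma} - \gamma b \right)$ by \eqref{algo:z-gamma}; then $s_{k+1}\left( Ax_{k+1}-\eta_{k}\right) = \frac{\rho}{\gamma} t_{k+1}\left( Az_{k+1}^{\gamma} - \gamma b \right)$, which by \eqref{algo:lambda} equals $t_{k+1}\left( \lambda_{k+1} - \mu_{k}\right)$. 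Finally, combining $\nu_{k}^{\gamma} + t_{k+1}\left( \lambda_{k+1}-\mu_{k}\right)$ with the definitions \eqref{algo:nu-gamma}, \eqref{algo:mu} and \eqref{algo:nu-gamma} at index $k+1$, and using \eqref{eq:nu-gamma-lambda}, yields the claimed identity. Plugging this in and collecting terms produces exactly \eqref{smooth:inq}.
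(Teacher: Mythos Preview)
Your proposal is correct and follows essentially the same approach as the paper: write the first-order optimality condition for \eqref{algo:x}, combine the Descent Lemma with the sharpened convexity inequality from \eqref{pre:f-bound} to get $f(x_{k+1}) \leq f(x) + \langle \nabla f(y_k), x_{k+1}-x\rangle + \frac{L}{2}\|x_{k+1}-y_k\|^2 - \frac{1}{2L}\|\nabla f(y_k)-\nabla f(x)\|^2$, and then use the algebraic identity $s_{k+1}(Ax_{k+1}-\eta_k) = \nu_{k+1}^\gamma - \nu_k^\gamma + (1-\gamma)(\lambda_{k+1}-\lambda_k)$ derived exactly as you outline. The only cosmetic difference is that the paper first substitutes the algebraic identity into the optimality condition to isolate $\nabla f(y_k)$ and then pairs with $x_{k+1}-x$, whereas you pair first and simplify afterward; the content is identical.
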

\begin{proof}
Let $x \in \sH$ and $k \geq 1$ be fixed. According to \eqref{algo:x} we have that
	\begin{align}
\nabla f \left( y_{k} \right) + \dfrac{1}{\gamma} A^{*} \nu_{k}^{\gamma}  + \dfrac{1}{\gamma} s_{k+1} A^{*}(Ax_{k+1}-\eta_k) + \dfrac{1}{\sigma}(x_{k+1} - y_k) + \beta A^{*} \left( Ay_{k} - b \right) = 0. \label{Fej:ite-x}
	\end{align}
On the other hand, from \eqref{algo:eta}, \eqref{algo:s} and \eqref{algo:lambda} we have
	\begin{align}
	\dfrac{1}{\gamma} s_{k+1} \left( Ax_{k+1} - \eta_{k} \right)
	& = \dfrac{\rho}{\gamma^{2}} t_{k+1} \left( \left( t_{k+1} - 1 + \gamma \right) Ax_{k+1} - \left( t_{k+1} - 1 \right) Ax_{k} - \gamma b \right) \nonumber \\
	& = \dfrac{\rho}{\gamma^{2}} t_{k+1} \left( Az_{k+1}^{\gamma} - \gamma b \right)
	= \dfrac{1}{\gamma} t_{k+1} \left( \lambda_{k+1} - \mu_{k} \right) \nonumber \\
	& = \dfrac{1}{\gamma} \left( \nu_{k+1}^{\gamma} - \nu_{k}^{\gamma} + \left( 1 - \gamma \right) \left( \lambda_{k+1} - \lambda_{k} \right) \right) , \label{Fej:nu-gamma}
	\end{align}
	where the last equation follows from \eqref{eq:nu-gamma-lambda}.
	Hence, replacing \eqref{Fej:nu-gamma} in \eqref{Fej:ite-x} we have
	\begin{equation}
	\label{Fej:opt-x}
	\nabla f \left( y_{k} \right) = - \dfrac{1}{\gamma} A^{*} \nu_{k+1}^{\gamma} + \dfrac{1}{\gamma} \left( 1 - \gamma \right) A^{*} \left( \lambda_{k} - \lambda_{k+1} \right) + \dfrac{1}{\sigma} \left( y_{k} - x_{k+1} \right) - \beta A^{*} \left( Ay_{k} - b \right) .
	\end{equation}
The Descent Lemma inequality \eqref{pre:f-bound} provides
	\begin{equation*}
	f \left( x_{k+1} \right) \leq f \left( y_{k} \right) + \left\langle \nabla f \left( y_{k} \right) , x_{k+1} - y_{k} \right\rangle + \dfrac{L}{2} \left\lVert x_{k+1} - y_{k} \right\rVert ^{2}
	\end{equation*}
and
	\begin{equation*}
	f \left( y_{k} \right) \leq f \left( x \right) + \left\langle \nabla f \left( y_{k} \right) , y_{k} - x \right\rangle - \dfrac{1}{2L} \left\lVert \nabla f \left( y_{k} \right) - \nabla f \left( x \right) \right\rVert ^{2} .
	\end{equation*}
By summing up these relations it yields
	\begin{align*}
	f \left( x_{k+1} \right)
	\leq & \ f \left( x \right) + \left\langle \nabla f \left( y_{k} \right) , x_{k+1} - x \right\rangle + \dfrac{L}{2} \left\lVert x_{k+1} - y_{k} \right\rVert ^{2} - \dfrac{1}{2L} \left\lVert \nabla f \left( y_{k} \right) - \nabla f \left( x \right) \right\rVert ^{2} \nonumber \\
	= & \ f \left( x \right) - \dfrac{1}{\gamma} \left\langle \nu_{k+1}^{\gamma} , Ax_{k+1} - Ax \right\rangle + \dfrac{1}{\gamma} \left( 1 - \gamma \right) \left\langle \lambda_{k} - \lambda_{k+1} , Ax_{k+1} - Ax \right\rangle \nonumber \\
	& + \dfrac{1}{\sigma} \left\langle y_{k} - x_{k+1} , x_{k+1} - x \right\rangle - \beta \left\langle Ay_{k} - b , Ax_{k+1} - Ax \right\rangle \nonumber \\
	& + \dfrac{L}{2} \left\lVert x_{k+1} - y_{k} \right\rVert ^{2} - \dfrac{1}{2L} \left\lVert \nabla f \left( y_{k} \right) - \nabla f \left( x \right) \right\rVert ^{2} ,
	\end{align*}
	which is nothing else than \eqref{smooth:inq}.
\end{proof}
In the following we denote
\begin{equation}\label{define:Q}
\Q := \dfrac{1}{\sigma} \Id - \beta A^{*} A .
\end{equation}
Assumption \eqref{assume:sigma} guarantees that $\gamma \Q - L \Id \in \sS_{+} \left( \sH \right)$, as
\begin{equation}\label{ineq:Q}
\gamma \Q - L \Id = \left( \dfrac{\gamma}{\sigma} - L \right) \Id - \gamma \beta A^{*} A \succcurlyeq \left( \dfrac{\gamma}{\sigma} - L - \gamma \beta \left\lVert A \right\rVert ^{2} \right) \Id .
\end{equation}

\begin{lem}
	\label{lem:inq}
	Let $\left\lbrace \left( x_{k} , \lambda_{k} \right) \right\rbrace _{k \geq 0}$ be the sequence generated by Algorithm \ref{algo:fast}. Then for every $\left( x , \lambda \right) \in \Fea \times \sG$ and every $k \geq 1$ the following two inequalities hold
	\begin{align}
	& \Lb \left( x_{k+1} , \lambda \right) - \Lb \left( x , \lambda_{k+1} \right) \nonumber \\
	\leq \ & \dfrac{1}{\gamma} \left( 1 - \gamma \right) \left\langle \lambda_{k} - \lambda_{k+1} , Ax_{k+1} - b \right\rangle
	+ \left\langle y_{k} - x_{k+1} , x_{k+1} - x \right\rangle _{\Q}
	+ \dfrac{1}{\rho} \left\langle \mu_{k} - \lambda_{k+1} , \lambda_{k+1} - \lambda \right\rangle \nonumber \\
	& - \dfrac{\beta}{2} \left\lVert Ax_{k+1} - b \right\rVert ^{2}
	+ \dfrac{L}{2} \left\lVert x_{k+1} - y_{k} \right\rVert ^{2}
	- \dfrac{1}{2L} \left\lVert \nabla f \left( y_{k} \right) - \nabla f \left( x \right) \right\rVert ^{2} \nonumber \\
	& - \dfrac{1}{\gamma} \left( t_{k+1} - 1 \right) \left\langle \lambda_{k+1} - \lambda_{k} , Ax_{k+1} - b \right\rangle
	+ \dfrac{1}{\gamma} \left( t_{k+1} - 1 \right) \left\langle \lambda_{k+1} - \lambda , Ax_{k+1} - Ax_{k} \right\rangle , \label{inq-1}
	\end{align}
	and
	\begin{align}
	& \Lb \left( x_{k+1} , \lambda \right) - \Lb \left( x , \lambda_{k+1} \right) \nonumber \\
	\leq \ 	& \Lb \left( x_{k} , \lambda \right) - \Lb \left( x , \lambda_{k} \right)
	+ \dfrac{1}{\gamma} \left( 1 - \gamma \right) \left\langle \lambda_{k} - \lambda_{k+1} , Ax_{k+1} - Ax_{k} \right\rangle
	+ \left\langle y_{k} - x_{k+1} , x_{k+1} - x_{k} \right\rangle _{\Q} \nonumber \\
	& + \dfrac{1}{\rho} \left\langle \mu_{k} - \lambda_{k+1} , \lambda_{k+1} - \lambda_{k} \right\rangle
	- \dfrac{\beta}{2} \left\lVert Ax_{k+1} - Ax_{k} \right\rVert ^{2}
	+ \dfrac{L}{2} \left\lVert x_{k+1} - y_{k} \right\rVert ^{2}
	\nonumber \\
	& - \dfrac{1}{2L} \left\lVert \nabla f \left( y_{k} \right) - \nabla f \left( x_{k} \right) \right\rVert ^{2}
	+ \left\langle \lambda - \lambda_{k+1} , Ax_{k+1} - Ax_{k} \right\rangle
	+ \left\langle \lambda_{k+1} - \lambda_{k} , Ax_{k+1} - b \right\rangle . \label{inq-2}
	\end{align}
\end{lem}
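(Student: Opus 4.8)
The plan is to obtain both inequalities from Lemma \ref{lem:smooth} by exploiting two facts: first, that $Ax=b$ whenever $x\in\Fea$, so $\Lb(x,\cdot)=f(x)$; and second, the identity obtained by substituting \eqref{algo:z-gamma} into \eqref{algo:lambda},
\[
\dfrac{1}{\rho}\left(\lambda_{k+1}-\mu_{k}\right)=\left(Ax_{k+1}-b\right)+\dfrac{t_{k+1}-1}{\gamma}\left(Ax_{k+1}-Ax_{k}\right),
\]
which expresses the feasibility residual through the dual increments. Throughout I will also use $\nu_{k+1}^{\gamma}=\gamma\lambda_{k+1}+(t_{k+1}-1)(\lambda_{k+1}-\lambda_{k})$ from \eqref{algo:nu-gamma}.

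For \eqref{inq-1} I would fix $(x,\lambda)\in\Fea\times\sG$ and write $\Lb(x_{k+1},\lambda)-\Lb(x,\lambda_{k+1})=f(x_{k+1})-f(x)+\langle\lambda,Ax_{k+1}-b\rangle+\tfrac{\beta}{2}\lVert Ax_{k+1}-b\rVert^{2}$, using $\Lb(x,\lambda_{k+1})=f(x)$. Then I bound $f(x_{k+1})-f(x)$ by \eqref{smooth:inq}, replacing $Ax_{k+1}-Ax$ by $Ax_{k+1}-b$ everywhere. The next step is to rewrite $\tfrac{1}{\sigma}\langle y_{k}-x_{k+1},x_{k+1}-x\rangle-\beta\langle Ay_{k}-b,Ax_{k+1}-b\rangle$ as $\langle y_{k}-x_{k+1},x_{k+1}-x\rangle_{\Q}-\beta\lVert Ax_{k+1}-b\rVert^{2}$, by inserting and subtracting $\beta\langle Ax_{k+1}-b,Ax_{k+1}-b\rangle$ inside the second inner product and using $A(x_{k+1}-x)=Ax_{k+1}-b$ together with $\Q=\tfrac{1}{\sigma}\Id-\beta A^{*}A$; the leftover $-\beta\lVert Ax_{k+1}-b\rVert^{2}$ merges with $\tfrac{\beta}{2}\lVert Ax_{k+1}-b\rVert^{2}$ to produce the term $-\tfrac{\beta}{2}\lVert Ax_{k+1}-b\rVert^{2}$. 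Expanding $\nu_{k+1}^{\gamma}$ splits $-\tfrac{1}{\gamma}\langle\nu_{k+1}^{\gamma},Ax_{k+1}-b\rangle$ into $-\langle\lambda_{k+1},Ax_{k+1}-b\rangle-\tfrac{t_{k+1}-1}{\gamma}\langle\lambda_{k+1}-\lambda_{k},Ax_{k+1}-b\rangle$; the first piece combines with $\langle\lambda,Ax_{k+1}-b\rangle$ to give $\langle\lambda-\lambda_{k+1},Ax_{k+1}-b\rangle$, which by the boxed identity equals $\tfrac{1}{\rho}\langle\mu_{k}-\lambda_{k+1},\lambda_{k+1}-\lambda\rangle+\tfrac{t_{k+1}-1}{\gamma}\langle\lambda_{k+1}-\lambda,Ax_{k+1}-Ax_{k}\rangle$. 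Collecting everything yields \eqref{inq-1}.

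For \eqref{inq-2} the key observation is to apply Lemma \ref{lem:smooth} at the point $x:=x_{k}$ instead of at the feasible $x$. Since $x\in\Fea$ gives $\Lb(x,\lambda_{k+1})=\Lb(x,\lambda_{k})=f(x)$, subtracting the two sides leaves $\Lb(x_{k+1},\lambda)-\Lb(x_{k},\lambda)=f(x_{k+1})-f(x_{k})+\langle\lambda,Ax_{k+1}-Ax_{k}\rangle+\tfrac{\beta}{2}\bigl(\lVert Ax_{k+1}-b\rVert^{2}-\lVert Ax_{k}-b\rVert^{2}\bigr)$, and I would expand the last bracket, via \eqref{pre:id-eq}, as $\beta\langle Ax_{k+1}-b,Ax_{k+1}-Ax_{k}\rangle-\tfrac{\beta}{2}\lVert Ax_{k+1}-Ax_{k}\rVert^{2}$. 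Bounding $f(x_{k+1})-f(x_{k})$ by \eqref{smooth:inq} at $x_{k}$, the two $\beta$-inner-product terms $\beta\langle Ax_{k+1}-b,\cdot\rangle-\beta\langle Ay_{k}-b,\cdot\rangle=-\beta\langle A(y_{k}-x_{k+1}),A(x_{k+1}-x_{k})\rangle$ combine with $\tfrac{1}{\sigma}\langle y_{k}-x_{k+1},x_{k+1}-x_{k}\rangle$ to form $\langle y_{k}-x_{k+1},x_{k+1}-x_{k}\rangle_{\Q}$. Finally, expanding $\nu_{k+1}^{\gamma}$ and applying the boxed identity to the term $\langle\lambda_{k+1}-\lambda_{k},Ax_{k+1}-b\rangle$ — namely $\tfrac{1}{\rho}\langle\mu_{k}-\lambda_{k+1},\lambda_{k+1}-\lambda_{k}\rangle+\langle\lambda_{k+1}-\lambda_{k},Ax_{k+1}-b\rangle=-\tfrac{t_{k+1}-1}{\gamma}\langle\lambda_{k+1}-\lambda_{k},Ax_{k+1}-Ax_{k}\rangle$ — makes all the $\nu^{\gamma}$ and $\lambda$ contributions reassemble into \eqref{inq-2}.

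The computations are essentially bookkeeping; the only genuine choices are recognizing that \eqref{inq-2} should be produced by invoking the smoothness lemma at $x_{k}$ rather than at the feasible point, and spotting how to complete the $\Q$-(semi)norm by inserting and subtracting the appropriate multiple of $Ax_{k+1}-b$ (respectively $A(x_{k+1}-x_{k})$). I expect the most error-prone part to be tracking signs when converting $\langle\lambda-\lambda_{k+1},\cdot\rangle$ into $\langle\mu_{k}-\lambda_{k+1},\cdot\rangle$ through the dual update, and making sure the $\tfrac{\beta}{2}\lVert\cdot\rVert^{2}$ feasibility terms end up with exactly the coefficient claimed.
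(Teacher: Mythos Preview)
Your proposal is correct and follows essentially the same route as the paper: both proofs start from Lemma~\ref{lem:smooth} (applied at $x$ for \eqref{inq-1} and at $x_{k}$ for \eqref{inq-2}), complete the $\Q$-inner product by splitting off the $\beta\langle Ax_{k+1}-b,\cdot\rangle$ term, expand $\nu_{k+1}^{\gamma}$ via \eqref{algo:nu-gamma}, and use the dual-update relation $\tfrac{1}{\rho}(\lambda_{k+1}-\mu_{k})=\tfrac{1}{\gamma}(Az_{k+1}^{\gamma}-\gamma b)$ to introduce the $\tfrac{1}{\rho}\langle\mu_{k}-\lambda_{k+1},\cdot\rangle$ terms. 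The only cosmetic difference is that the paper records the dual identity as the inner-product equality $0=\tfrac{1}{\rho}\langle\mu_{k}-\lambda_{k+1},\cdot\rangle+\tfrac{1}{\gamma}\langle\cdot,Az_{k+1}^{\gamma}-\gamma b\rangle$ and then expands $Az_{k+1}^{\gamma}$, whereas you substitute your boxed identity directly; the computations are identical.
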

\begin{proof}
Let $x \in \Fea$, which means that $Ax = b$, $\lambda \in \sG$, and  $k \geq 1$ be fixed. We deduce from Lemma \ref{lem:smooth} that
	\begin{align}
	& f \left( x_{k+1} \right) + \left\langle \lambda , Ax_{k+1} - b \right\rangle \nonumber \\
	\leq \ 	& f \left( x \right) + \left\langle \lambda_{k+1} , Ax - b \right\rangle
	+ \dfrac{\beta}{2} \left\lVert Ax - b \right\rVert ^{2}
	+ \left\langle \lambda - \dfrac{1}{\gamma} \nu_{k+1}^{\gamma} , Ax_{k+1} - b \right\rangle \nonumber \\
	& + \dfrac{1}{\gamma} \left( 1 - \gamma \right) \left\langle \lambda_{k} - \lambda_{k+1} , Ax_{k+1} - b \right\rangle
	+ \dfrac{1}{\sigma} \left\langle y_{k} - x_{k+1} , x_{k+1} - x \right\rangle \nonumber \\
	& - \beta \left\langle Ay_{k} - b , Ax_{k+1} - Ax \right\rangle
	+ \dfrac{L}{2} \left\lVert x_{k+1} - y_{k} \right\rVert ^{2}
	- \dfrac{1}{2L} \left\lVert \nabla f \left( y_{k} \right) - \nabla f \left( x \right) \right\rVert ^{2} \nonumber \\
	= \ 	& f \left( x \right) + \left\langle \lambda_{k+1} , Ax - b \right\rangle
	+ \dfrac{\beta}{2} \left\lVert Ax - b \right\rVert ^{2}
	+ \left\langle \lambda - \dfrac{1}{\gamma} \nu_{k+1}^{\gamma} , Ax_{k+1} - b \right\rangle \nonumber \\
	& + \dfrac{1}{\gamma} \left( 1 - \gamma \right) \left\langle \lambda_{k} - \lambda_{k+1} , Ax_{k+1} - b \right\rangle
	+ \left\langle y_{k} - x_{k+1} , x_{k+1} - x \right\rangle _{\Q} \nonumber \\
	& - \beta \left\lVert Ax_{k+1} - b \right\rVert ^{2}
	+ \dfrac{L}{2} \left\lVert x_{k+1} - y_{k} \right\rVert ^{2}
	- \dfrac{1}{2L} \left\lVert \nabla f \left( y_{k} \right) - \nabla f \left( x \right) \right\rVert ^{2}, \label{inq-1:x}
	\end{align}	
where, by using the definition of $\Q$, the last identity follows from
\begin{align*}
& \dfrac{1}{\sigma} \left\langle y_{k} - x_{k+1} , x_{k+1} - x \right\rangle -  \beta \left\langle Ay_{k} - b , Ax_{k+1} - Ax \right\rangle \\
 = \ & \dfrac{1}{\sigma} \left\langle y_{k} - x_{k+1} , x_{k+1} - x \right\rangle -  \beta \left\langle Ay_{k} - Ax_{k+1} , Ax_{k+1} - Ax \right\rangle - \beta \left\lVert Ax_{k+1} - b \right\rVert ^{2} \\
 = \ & \dfrac{1}{\sigma} \left\langle y_{k} - x_{k+1} , x_{k+1} - x \right\rangle -  \beta \left\langle y_{k} - x_{k+1} , A^*A(x_{k+1} - x) \right\rangle - \beta \left\lVert Ax_{k+1} - b \right\rVert ^{2}\\
= \ &  \left\langle y_{k} - x_{k+1} , x_{k+1} - x \right\rangle _{\Q} -  \beta \left\lVert Ax_{k+1} - b \right\rVert ^{2}.
\end{align*}

Recall that from  \eqref{algo:lambda} we have
	\begin{equation*}
	0 = \mu_{k} - \lambda_{k+1} + \dfrac{\rho}{\gamma} \left( Az_{k+1}^{\gamma} - \gamma b \right) ,
	\end{equation*}
	which yields further
	\begin{equation}
	\label{inq-1:lambda}
	0 = \dfrac{1}{\rho} \left\langle \mu_{k} - \lambda_{k+1} , \lambda_{k+1} - \lambda \right\rangle
	+ \dfrac{1}{\gamma} \left\langle \lambda_{k+1} - \lambda , Az_{k+1}^{\gamma} - \gamma b \right\rangle .
	\end{equation}	
Moreover,  from \eqref{algo:nu-gamma} and \eqref{algo:z-gamma} we have
	\begin{align*}
	& \left\langle \lambda - \dfrac{1}{\gamma} \nu_{k+1}^{\gamma} , Ax_{k+1} - b \right\rangle
	+ \dfrac{1}{\gamma} \left\langle \lambda_{k+1} - \lambda , Az_{k+1}^{\gamma} - \gamma b \right\rangle \nonumber \\
= \ 	&  \left\langle \lambda - \lambda_{k+1} - \frac{1}{\gamma}(t_{k+1}-1)(\lambda_{k+1} - \lambda_k) , Ax_{k+1} - b \right\rangle \nonumber \\
& +  \left\langle \lambda_{k+1} - \lambda, Ax_{k+1} + \frac{1}{\gamma}  \left( t_{k+1} - 1 \right)A(x_{k+1} - x_k)- b \right\rangle \nonumber \\
	= \ 	& \left\langle \lambda - \lambda_{k+1} , Ax_{k+1} - b \right\rangle
	- \dfrac{1}{\gamma} \left( t_{k+1} - 1 \right) \left\langle \lambda_{k+1} - \lambda_{k} , Ax_{k+1} - b \right\rangle \nonumber \\
	& + \left\langle \lambda_{k+1} - \lambda , Ax_{k+1} - b \right\rangle
	+ \dfrac{1}{\gamma} \left( t_{k+1} - 1 \right) \left\langle \lambda_{k+1} - \lambda , Ax_{k+1} - Ax_{k} \right\rangle \nonumber \\
	= \		& - \dfrac{1}{\gamma} \left( t_{k+1} - 1 \right) \left\langle \lambda_{k+1} - \lambda_{k} , Ax_{k+1} - b \right\rangle
	+ \dfrac{1}{\gamma} \left( t_{k+1} - 1 \right) \left\langle \lambda_{k+1} - \lambda , Ax_{k+1} - Ax_{k} \right\rangle,
	\end{align*}
therefore, by summing up \eqref{inq-1:x} and \eqref{inq-1:lambda} and after rearranging the terms, the estimate \eqref{inq-1} follows.
	
Next we will prove the second estimate. By take $x := x_{k}$ in inequality \eqref{smooth:inq} we get
	\begin{align}
	& \ f \left( x_{k+1} \right) + \left\langle \lambda , Ax_{k+1} - b \right\rangle \nonumber \\
	\leq & \ f \left( x_{k} \right) + \left\langle \lambda , Ax_{k} - b \right\rangle
	+ \left\langle \lambda - \dfrac{1}{\gamma} \nu_{k+1}^{\gamma} , Ax_{k+1} - Ax_{k} \right\rangle
	+ \dfrac{1}{\gamma} \left( 1 - \gamma \right) \left\langle \lambda_{k} - \lambda_{k+1} , Ax_{k+1} - Ax_{k} \right\rangle \nonumber \\
	& + \dfrac{1}{\sigma} \left\langle y_{k} - x_{k+1} , x_{k+1} - x_{k} \right\rangle
	- \beta \left\langle Ay_{k} - b , Ax_{k+1} - Ax_{k} \right\rangle \nonumber \\
	& + \dfrac{L}{2} \left\lVert x_{k+1} - y_{k} \right\rVert ^{2}
	- \dfrac{1}{2L} \left\lVert \nabla f \left( y_{k} \right) - \nabla f \left( x_{k} \right) \right\rVert ^{2} \nonumber \\
	= & \ f \left( x_{k} \right) + \left\langle \lambda , Ax_{k} - b \right\rangle
	+ \left\langle \lambda - \dfrac{1}{\gamma}  \nu_{k+1}^{\gamma} , Ax_{k+1} - Ax_{k} \right\rangle
	+ \dfrac{1}{\gamma} \left( 1 - \gamma \right) \left\langle \lambda_{k} - \lambda_{k+1} , Ax_{k+1} - Ax_{k} \right\rangle \nonumber \\
	& + \left\langle y_{k} - x_{k+1} , x_{k+1} - x_{k} \right\rangle _{\Q}
	- \beta \left\langle Ax_{k+1} - b , Ax_{k+1} - Ax_{k} \right\rangle \nonumber \\
	& + \dfrac{L}{2} \left\lVert x_{k+1} - y_{k} \right\rVert ^{2}
	- \dfrac{1}{2L} \left\lVert \nabla f \left( y_{k} \right) - \nabla f \left( x_{k} \right) \right\rVert ^{2}, \label{inq-2:x-pre}
	\end{align}	
where, by using again the definition of $\Q$, the last identity follows from
\begin{align*}
& \dfrac{1}{\sigma} \left\langle y_{k} - x_{k+1} , x_{k+1} - x_k \right\rangle -  \beta \left\langle Ay_{k} - b , Ax_{k+1} - Ax_k \right\rangle \\
 = \ & \dfrac{1}{\sigma} \left\langle y_{k} - x_{k+1} , x_{k+1} - x_k \right\rangle -  \beta \left\langle Ay_{k} - Ax_{k+1} , Ax_{k+1} - Ax_k \right\rangle - \beta \left\langle Ax_{k+1} - b , Ax_{k+1} - Ax_k \right\rangle \\
 = \ & \dfrac{1}{\sigma} \left\langle y_{k} - x_{k+1} , x_{k+1} - x_k \right\rangle -  \beta \left\langle y_{k} - x_{k+1} , A^*A(x_{k+1} - x_k) \right\rangle - \beta \left\langle Ax_{k+1} - b , Ax_{k+1} - Ax_k \right\rangle\\
= \ &  \left\langle y_{k} - x_{k+1} , x_{k+1} - x_k \right\rangle _{\Q} -  \beta \left\langle Ax_{k+1} - b , Ax_{k+1} - Ax_k \right\rangle.
\end{align*}

	The identity \eqref{pre:id-eq} gives us
	\begin{equation*}
	- \beta \left\langle Ax_{k+1} - b , Ax_{k+1} - Ax_{k} \right\rangle
	= - \dfrac{\beta}{2} \left\lVert Ax_{k+1} - b \right\rVert ^{2}
	- \dfrac{\beta}{2} \left\lVert Ax_{k+1} - Ax_{k} \right\rVert ^{2}
	+ \dfrac{\beta}{2} \left\lVert Ax_{k} - b \right\rVert ^{2} ,
	\end{equation*}
	hence, by recalling relation \eqref{intro:Fea:eq}, \eqref{inq-2:x-pre} can be equivalently written as
	\begin{align}
	& \ \Lb \left( x_{k+1} , \lambda \right) - \Lb \left( x , \lambda_{k+1} \right) \nonumber \\
	\leq & \ \Lb \left( x_{k} , \lambda \right) - \Lb \left( x , \lambda_{k} \right)
	+ \left\langle \lambda - \dfrac{1}{\gamma} \nu_{k+1}^{\gamma} , Ax_{k+1} - Ax_{k} \right\rangle
	+ \dfrac{1}{\gamma} \left( \gamma - 1 \right) \left\langle \lambda_{k+1} - \lambda_{k} , Ax_{k+1} - Ax_{k} \right\rangle \nonumber \\
	& + \left\langle y_{k} - x_{k+1} , x_{k+1} - x_{k} \right\rangle _{\Q}
	- \dfrac{\beta}{2} \left\lVert Ax_{k+1} - Ax_{k} \right\rVert ^{2}
	+ \dfrac{L}{2} \left\lVert x_{k+1} - y_{k} \right\rVert ^{2} \nonumber \\
       & - \dfrac{1}{2L} \left\lVert \nabla f \left( y_{k} \right) - \nabla f \left( x_{k} \right) \right\rVert ^{2} . \label{inq-2:x}
	\end{align}
	In addition, by taking $\lambda := \lambda_{k}$ in \eqref{inq-1:lambda} gives
	\begin{equation}
	\label{inq-2:lambda}
	0 = \dfrac{1}{\rho} \left\langle \mu_{k} - \lambda_{k+1} , \lambda_{k+1} - \lambda_{k} \right\rangle
	+ \dfrac{1}{\gamma} \left\langle \lambda_{k+1} - \lambda_{k} , Az_{k+1}^{\gamma} - \gamma b \right\rangle .
	\end{equation}
Moreover, we have from \eqref{algo:nu-gamma} and \eqref{algo:z-gamma}
	\begin{align*}
	& \left\langle \lambda - \dfrac{1}{\gamma} \nu_{k+1}^{\gamma} , Ax_{k+1} - Ax_{k} \right\rangle
	+ \dfrac{1}{\gamma} \left\langle \lambda_{k+1} - \lambda_{k} , Az_{k+1}^{\gamma} - \gamma b \right\rangle \nonumber \\
	= \ 	& \left\langle \lambda - \lambda_{k+1} , Ax_{k+1} - Ax_{k} \right\rangle
	- \dfrac{1}{\gamma} \left( t_{k+1} - 1 \right) \left\langle \lambda_{k+1} - \lambda_{k} , Ax_{k+1} - Ax_{k} \right\rangle \nonumber \\
	& + \left\langle \lambda_{k+1} - \lambda_{k} , Ax_{k+1} - b \right\rangle
	+ \dfrac{1}{\gamma} \left( t_{k+1} - 1 \right) \left\langle \lambda_{k+1} - \lambda_{k} , Ax_{k+1} - Ax_{k} \right\rangle \nonumber \\
	= \ 	& \left\langle \lambda - \lambda_{k+1} , Ax_{k+1} - Ax_{k} \right\rangle
	+ \left\langle \lambda_{k+1} - \lambda_{k} , Ax_{k+1} - b \right\rangle,
	\end{align*}
therefore, by summing up \eqref{inq-2:x} and \eqref{inq-2:lambda} and after rearranging terms, the estimate \eqref{inq-2} follows.
\end{proof}

For $(x,\lambda) \in \Fea \times \sG$ and $k \geq 1$ we introduce the following energy function associated with Algorithm \ref{algo:fast}
\begin{align*}
\E_{k} \left( x , \lambda \right)
:= & \ t_{k} \left( t_{k} - 1 + \gamma \right) \left( \Lb \left( x_{k} , \lambda \right) - \Lb \left( x , \lambda_{k} \right) \right)
+ \dfrac{1}{2} \left\lVert z_{k}^{\gamma} - \gamma x \right\rVert _{\Q}^{2}
+ \dfrac{1}{2 \rho} \left\lVert \nu_{k}^{\gamma} - \gamma \lambda \right\rVert ^{2} \nonumber \\
&  + \dfrac{1}{2} \gamma \left( 1 - \gamma \right) \left\lVert x_{k} - x \right\rVert _{\Q}^{2}
+ \dfrac{1}{2 \rho}\gamma \left( 1 - \gamma \right) \left\lVert \lambda_{k} - \lambda \right\rVert ^{2}
+ \dfrac{1 - \gamma}{2 \rho} \left( t_{k} - 1 \right) \left\lVert \lambda_{k} - \lambda_{k-1} \right\rVert ^{2} .
\end{align*}
According to \eqref{intro:saddle}, for every  $\left( x_{*} , \lambda_{*} \right) \in \sol$ and every $k \geq 1$ it holds
\begin{equation*}
\E_{k} \left( x_{*} , \lambda_{*} \right) \geq 0 .
\end{equation*}

The following estimate for the energy function will play a fundamental role in our analysis.
\begin{prop}
	\label{prop:dec}
	Let $\left\lbrace \left( x_{k} , \lambda_{k} \right) \right\rbrace _{k \geq 0}$ be the sequence generated by Algorithm \ref{algo:fast}. Then for every $\left( x , \lambda \right) \in \Fea \times \sG$ and every $k \geq 1$ it holds
	\begin{align}
	\E_{k+1} \left( x , \lambda \right)
	 \leq & \ \E_{k} \left( x , \lambda \right)
	+ \left( t_{k+1}^{2} - t_{k+1} - t_{k}^{2} + \left( 1 - \gamma \right) t_{k} \right) \begin{pmatrix}
	\Lb \left( x_{k} , \lambda \right) - \Lb \left( x , \lambda_{k} \right)
	\end{pmatrix} \nonumber \\
	& - \dfrac{\beta \gamma}{2} t_{k+1} \left\lVert Ax_{k+1} - b \right\rVert ^{2}
	- \dfrac{\beta}{2} t_{k+1} \left( t_{k+1} - 1 \right) \left\lVert Ax_{k+1} - Ax_{k} \right\rVert ^{2} \nonumber \\
	&  - \dfrac{\gamma}{2L} t_{k+1} \left\lVert \nabla f \left( y_{k} \right) - \nabla f \left( x \right) \right\rVert ^{2}
	- \dfrac{1}{2L} t_{k+1} \left( t_{k+1} - 1 \right) \left\lVert \nabla f \left( y_{k} \right) - \nabla f \left( x_{k} \right) \right\rVert ^{2} \nonumber \\
	&  - \left(1- \gamma \right) \left( t_{k+1} - 1 \right) \left\lVert x_{k+1} - x_{k} \right\rVert _{\Q}^{2}
	- \dfrac{1- \gamma}{2 \rho} \left(2 t_{k+1} - 1 \right) \left\lVert \lambda_{k+1} - \lambda_{k} \right\rVert ^{2} \nonumber \\
	& - \dfrac{1}{2} t_{k+1}^{2} \left\lVert x_{k+1} - y_{k} \right\rVert _{\gamma \Q - L \Id}^{2}
	- \dfrac{\gamma}{2 \rho} t_{k+1}^{2} \left\lVert \lambda_{k+1} - \mu_{k} \right\rVert ^{2} . \label{dec:inq}
	\end{align}
\end{prop}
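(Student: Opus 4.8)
The plan is to derive \eqref{dec:inq} by adding a suitable nonnegative multiple of \eqref{inq-1} to a nonnegative multiple of \eqref{inq-2} and then converting all resulting inner products into the squared (semi)norms that make up the energy function, via the polarization identities \eqref{pre:id-eq}, \eqref{pre:sum-n0}, \eqref{pre:vm:id-eq}, \eqref{pre:vm:sum-n0} and the relations between the auxiliary sequences gathered in Remark~\ref{rmk:z}.

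Concretely, I would fix $(x,\lambda)\in\Fea\times\sG$ and $k\geq 1$, multiply \eqref{inq-1} by $\gamma t_{k+1}>0$ and \eqref{inq-2} by $t_{k+1}(t_{k+1}-1)\geq 0$ (the second factor is nonnegative since $\{t_k\}$ is nondecreasing with $t_1=1$), and sum. Because $\gamma t_{k+1}+t_{k+1}(t_{k+1}-1)=t_{k+1}(t_{k+1}-1+\gamma)$, the left-hand side is exactly the leading term $t_{k+1}(t_{k+1}-1+\gamma)\bigl(\Lb(x_{k+1},\lambda)-\Lb(x,\lambda_{k+1})\bigr)$ of $\E_{k+1}(x,\lambda)$; on the right-hand side the coefficient of $\Lb(x_k,\lambda)-\Lb(x,\lambda_k)$ is $t_{k+1}(t_{k+1}-1)=t_k(t_k-1+\gamma)+\bigl(t_{k+1}^2-t_{k+1}-t_k^2+(1-\gamma)t_k\bigr)$, producing the leading term of $\E_k(x,\lambda)$ and precisely the residual coefficient of \eqref{dec:inq}. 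The four terms involving $\lVert Ax_{k+1}-b\rVert^2$, $\lVert Ax_{k+1}-Ax_k\rVert^2$, $\lVert\nabla f(y_k)-\nabla f(x)\rVert^2$ and $\lVert\nabla f(y_k)-\nabla f(x_k)\rVert^2$ pass straight through with the stated coefficients, while the two copies of $\tfrac{L}{2}\lVert x_{k+1}-y_k\rVert^2$ combine to $\tfrac{L}{2}t_{k+1}(t_{k+1}-1+\gamma)\lVert x_{k+1}-y_k\rVert^2\le\tfrac{L}{2}t_{k+1}^2\lVert x_{k+1}-y_k\rVert^2$ since $\gamma\le 1$. A first useful observation is that the $A$-bilinear coupling terms almost entirely cancel: the inertial couplings obtained by applying $\gamma t_{k+1}$ to the $\tfrac{1}{\gamma}(t_{k+1}-1)$-terms of \eqref{inq-1} cancel against those obtained by applying $t_{k+1}(t_{k+1}-1)$ to the order-one couplings of \eqref{inq-2}, and what is left collapses, using $Az_{k+1}^\gamma-\gamma b=\tfrac{\gamma}{\rho}(\lambda_{k+1}-\mu_k)$ from \eqref{algo:lambda}, into the single dual term $\tfrac{(1-\gamma)t_{k+1}}{\rho}\langle\lambda_k-\lambda_{k+1},\lambda_{k+1}-\mu_k\rangle$.

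What remains is to absorb the surviving inner products --- the $\Q$-weighted ones $\gamma t_{k+1}\langle y_k-x_{k+1},x_{k+1}-x\rangle_\Q$ and $t_{k+1}(t_{k+1}-1)\langle y_k-x_{k+1},x_{k+1}-x_k\rangle_\Q$, and the $\tfrac{1}{\rho}$-weighted ones $\tfrac{\gamma t_{k+1}}{\rho}\langle\mu_k-\lambda_{k+1},\lambda_{k+1}-\lambda\rangle$, $\tfrac{t_{k+1}(t_{k+1}-1)}{\rho}\langle\mu_k-\lambda_{k+1},\lambda_{k+1}-\lambda_k\rangle$ and the coupling remainder above --- into the quadratic terms of $\E_{k+1}$, $\E_k$ and the negative remainders. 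On the primal side I would use $z_{k+1}^\gamma-\gamma x=\gamma(x_{k+1}-x)+(t_{k+1}-1)(x_{k+1}-x_k)$ from \eqref{algo:z-gamma}, $z_k^\gamma-\gamma x=\gamma(x_k-x)+t_{k+1}(y_k-x_k)$ from \eqref{algo:y} and \eqref{eq:z-x-y}, and $z_{k+1}^\gamma-z_k^\gamma=t_{k+1}(x_{k+1}-y_k)+(\gamma-1)(x_{k+1}-x_k)$, i.e.\ \eqref{eq:dz:x-x}; expanding $\tfrac12\lVert z_{k+1}^\gamma-\gamma x\rVert_\Q^2-\tfrac12\lVert z_k^\gamma-\gamma x\rVert_\Q^2+\tfrac{\gamma(1-\gamma)}{2}\bigl(\lVert x_{k+1}-x\rVert_\Q^2-\lVert x_k-x\rVert_\Q^2\bigr)$ through \eqref{pre:vm:id-eq} and collecting, one finds that all the primal inner products and all terms linear in $x$ cancel, and the remaining purely quadratic expression in $x_{k+1},x_k,y_k$ equals $-\tfrac{\gamma}{2}t_{k+1}^2\lVert x_{k+1}-y_k\rVert_\Q^2-(1-\gamma)(t_{k+1}-1)\lVert x_{k+1}-x_k\rVert_\Q^2-\tfrac{1-\gamma}{2}\lVert(x_{k+1}-x_k)-t_{k+1}(x_{k+1}-y_k)\rVert_\Q^2$; the last term is nonpositive (here $\gamma\le 1$ and $\Q\succcurlyeq 0$ are used) and is discarded, and combining $-\tfrac{\gamma}{2}t_{k+1}^2\lVert x_{k+1}-y_k\rVert_\Q^2$ with the $\tfrac{L}{2}t_{k+1}^2\lVert x_{k+1}-y_k\rVert^2$ from above yields $-\tfrac12 t_{k+1}^2\lVert x_{k+1}-y_k\rVert_{\gamma\Q-L\Id}^2$, which is legitimate by \eqref{ineq:Q}. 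The dual side is handled identically with $\Q\leftrightarrow\tfrac{1}{\rho}\Id$, $x\leftrightarrow\lambda$, $z^\gamma\leftrightarrow\nu^\gamma$ and $x_{k+1}-y_k\leftrightarrow\lambda_{k+1}-\mu_k$, using \eqref{algo:mu}, \eqref{algo:nu-gamma}, \eqref{eq:nu-lambda}, \eqref{eq:nu-gamma-lambda} and the relation $\mu_k-\lambda_k=\tfrac{t_k-1}{t_{k+1}}(\lambda_k-\lambda_{k-1})$; the only asymmetry is that the extra coupling term raises the effective coefficient of $\langle\mu_k-\lambda_{k+1},\lambda_{k+1}-\lambda_k\rangle$ from $t_{k+1}(t_{k+1}-1)$ to $t_{k+1}(t_{k+1}-\gamma)$, which is exactly what produces the inertial term $\tfrac{1-\gamma}{2\rho}(t_k-1)\lVert\lambda_k-\lambda_{k-1}\rVert^2$ inside the energy, the remainder $-\tfrac{1-\gamma}{2\rho}(2t_{k+1}-1)\lVert\lambda_{k+1}-\lambda_k\rVert^2$, and the remainder $-\tfrac{\gamma}{2\rho}t_{k+1}^2\lVert\lambda_{k+1}-\mu_k\rVert^2$.

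The main obstacle is this final stage of matching: every monomial in $t_{k+1}$, $t_{k+1}-1$, $\gamma$ and $1-\gamma$ must reconcile exactly across the ten squared-norm expressions of $\E_{k+1}-\E_k$ and the eight negative terms of \eqref{dec:inq}, and the primal/dual asymmetry created by the coupling term, together with the bookkeeping of the $(1-\gamma)$-weighted inertial terms, is delicate. It is worth noting that only $t_{k+1}\ge 1$ and $\gamma\le 1$ are needed for \eqref{dec:inq}; the contraction $t_{k+1}^2-mt_{k+1}\le t_k^2$ from \eqref{assume:t-k+} plays no role here and is used only afterwards to sign the residual coefficient $t_{k+1}^2-t_{k+1}-t_k^2+(1-\gamma)t_k$.
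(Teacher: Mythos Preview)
Your proposal is correct and follows essentially the same route as the paper: multiply \eqref{inq-1} by $\gamma t_{k+1}$ and \eqref{inq-2} by $t_{k+1}(t_{k+1}-1)$, add, convert the $A$-coupling via $Az_{k+1}^\gamma-\gamma b=\tfrac{\gamma}{\rho}(\lambda_{k+1}-\mu_k)$, and then turn the remaining $\Q$- and $\tfrac{1}{\rho}$-weighted inner products into the squared norms of the energy using \eqref{algo:z-gamma}, \eqref{eq:dz:x-x}, \eqref{eq:x-y-z}, \eqref{eq:nu-gamma-lambda} together with the polarization identities, discarding the two nonpositive $(1-\gamma)$-weighted squares. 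The only cosmetic difference is organizational---the paper treats the coupling term separately via \eqref{dec:inn:mix} (which is where the estimate $\tfrac{(t_k-1)^2}{t_{k+1}}\le t_k-1$ enters) and then handles the primal and dual inner products in \eqref{dec:inn:x} and \eqref{dec:inn:lambda}, whereas you fold the coupling into the dual block by raising the coefficient of $\langle\mu_k-\lambda_{k+1},\lambda_{k+1}-\lambda_k\rangle$ to $t_{k+1}(t_{k+1}-\gamma)$; both lead to the same inequality.
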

\begin{proof}
	Let $\left( x , \lambda \right) \in \Fea \times \sG$ and $k \geq 1$ be fixed. Multiplying \eqref{inq-2} by $t_{k+1} \left( t_{k+1} - 1 \right) \geq 0$ and  \eqref{inq-1} by $\gamma t_{k+1}$, and adding the resulting inequalities, yields
	\begin{align}
	& \ t_{k+1} \left( t_{k+1} - 1 + \gamma \right) \begin{pmatrix}
	\Lb \left( x_{k+1} , \lambda \right) - \Lb \left( x , \lambda_{k+1} \right)
	\end{pmatrix} \nonumber \\
	\leq & \ t_{k+1} \left( t_{k+1} - 1 \right) \begin{pmatrix}
	\Lb \left( x_{k} , \lambda \right) - \Lb \left( x , \lambda_{k} \right)
	\end{pmatrix} \nonumber \\
	& + \dfrac{1}{\gamma} \left( 1 - \gamma \right) t_{k+1} \left\langle \lambda_{k} - \lambda_{k+1} , \gamma \left( Ax_{k+1} - b \right) + \left( t_{k+1} - 1 \right) \left( Ax_{k+1} - Ax_{k} \right) \right\rangle \nonumber \\
	& + t_{k+1} \left\langle y_{k} - x_{k+1} , \gamma \left( x_{k+1} - x \right) + \left( t_{k+1} - 1 \right) \left( x_{k+1} - x_{k} \right) \right\rangle _{\Q} \nonumber \\
	& + \dfrac{1}{\rho} t_{k+1} \left\langle \mu_{k} - \lambda_{k+1} , \gamma \left( \lambda_{k+1} - \lambda \right) + \left( t_{k+1} - 1 \right) \left( \lambda_{k+1} - \lambda_{k} \right) \right\rangle \nonumber \\
	& - \dfrac{\beta \gamma}{2} t_{k+1} \left\lVert Ax_{k+1} - b \right\rVert ^{2}
	- \dfrac{\beta}{2} t_{k+1} \left( t_{k+1} - 1 \right) \left\lVert Ax_{k+1} - Ax_{k} \right\rVert ^{2} \nonumber \\
	& - \dfrac{\gamma}{2L} t_{k+1} \left\lVert \nabla f \left( y_{k} \right) - \nabla f \left( x \right) \right\rVert ^{2}
	- \dfrac{1}{2L} t_{k+1} \left( t_{k+1} - 1 \right) \left\lVert \nabla f \left( y_{k} \right) - \nabla f \left( x_{k} \right) \right\rVert ^{2} \nonumber \\
	& + \dfrac{L}{2} t_{k+1} \left( t_{k+1} - 1 + \gamma \right) \left\lVert x_{k+1} - y_{k} \right\rVert ^{2} . \label{dec:pre}
	\end{align}
According to \eqref{algo:mu}, \eqref{algo:z-gamma} and \eqref{algo:lambda} we have
	\begin{align}
	& \dfrac{1}{\gamma} \left( 1 - \gamma \right) t_{k+1} \left\langle \lambda_{k} - \lambda_{k+1} , \gamma \left( Ax_{k+1} - b \right) + \left( t_{k+1} - 1 \right) \left( Ax_{k+1} - Ax_{k} \right) \right\rangle \nonumber \\
	= \ 	& \dfrac{1}{\gamma} \left( 1 - \gamma \right) t_{k+1} \left\langle \lambda_{k} - \lambda_{k+1} , Az_{k+1}^{\gamma} - \gamma b \right\rangle
	= \dfrac{1}{\rho} \left( 1 - \gamma \right) t_{k+1} \left\langle \lambda_{k} - \lambda_{k+1} , \lambda_{k+1} - \mu_{k} \right\rangle \nonumber \\
	= \ 	& - \dfrac{1 - \gamma}{2 \rho} t_{k+1} \left\lVert \lambda_{k+1} - \lambda_{k} \right\rVert ^{2}
	- \dfrac{1 - \gamma}{2 \rho} t_{k+1} \left\lVert \lambda_{k+1} - \mu_{k} \right\rVert ^{2}
	+ \dfrac{1 - \gamma}{2 \rho} t_{k+1} \left\lVert \mu_{k} - \lambda_{k} \right\rVert ^{2} \nonumber \\
	\leq \ 	& - \dfrac{1 - \gamma}{2 \rho} t_{k+1} \left\lVert \lambda_{k+1} - \lambda_{k} \right\rVert ^{2} +  \dfrac{1 - \gamma}{2 \rho} \frac{\left( t_{k} - 1 \right)^2}{t_{k+1}} \left\lVert \lambda_{k} - \lambda_{k-1} \right\rVert ^{2} \nonumber \\
\leq \ & - \dfrac{1 - \gamma}{2 \rho} t_{k+1} \left\lVert \lambda_{k+1} - \lambda_{k} \right\rVert ^{2} + \dfrac{1 - \gamma}{2 \rho} \left( t_{k} - 1 \right) \left\lVert \lambda_{k} - \lambda_{k-1} \right\rVert ^{2}, \label{dec:inn:mix}
	\end{align}
where in the last inequality we use that $\{t_k\}_{k \geq 1}$ is nondecreasing and that $t_{k} \geq 1$ for every $k \geq 1$.
	
On the other hand, \eqref{algo:z-gamma}, \eqref{eq:dz:z-x} and \eqref{pre:vm:id-eq} give 
	\begin{align}
	& \ t_{k+1} \left\langle y_{k} - x_{k+1} , \gamma \left( x_{k+1} - x \right) + \left( t_{k+1} - 1 \right) \left( x_{k+1} - x_{k} \right) \right\rangle _{\Q} \nonumber \\
	= & \left\langle z_{k}^{\gamma} - z_{k+1}^{\gamma} , z_{k+1}^{\gamma} - \gamma x \right\rangle _{\Q}
	+ \left( \gamma - 1 \right) \left\langle x_{k+1} - x_{k} , \gamma \left( x_{k+1} - x \right) + \left( t_{k+1} - 1 \right) \left( x_{k+1} - x_{k} \right) \right\rangle _{\Q} \nonumber \\
	= & - \dfrac{1}{2} \left\lVert z_{k+1}^{\gamma} - z_{k}^{\gamma} \right\rVert _{\Q}^{2} - \dfrac{1}{2} \left\lVert z_{k+1}^{\gamma} - \gamma x \right\rVert _{\Q}^{2} + \dfrac{1}{2} \left\lVert z_{k}^{\gamma} - \gamma x \right\rVert _{\Q}^{2} + \dfrac{1}{2} \gamma \left( \gamma - 1 \right) \left\lVert x_{k+1} - x_{k} \right\rVert _{\Q}^{2} \nonumber \\
	& + \dfrac{1}{2} \gamma \left( \gamma - 1 \right) \left\lVert x_{k+1} - x \right\rVert _{\Q}^{2} - \dfrac{1}{2} \gamma \left( \gamma - 1 \right) \left\lVert x_{k} - x \right\rVert _{\Q}^{2} + \left( \gamma - 1 \right) \left( t_{k+1} - 1 \right) \left\lVert x_{k+1} - x_{k} \right\rVert _{\Q}^{2} . \label{dec:inn:x:pre}
	\end{align}
From \eqref{pre:vm:sum-n0}, \eqref{eq:dz:x-x} and \eqref{eq:x-y-z} we have 
	\begin{align*}
	- \dfrac{1}{2} \left\lVert z_{k+1}^{\gamma} - z_{k}^{\gamma} \right\rVert _{\Q}^{2}
	 = & - \dfrac{1}{2} \left\lVert z_{k+1} - z_{k} + \left( \gamma - 1 \right) \left( x_{k+1} - x_{k} \right) \right\rVert _{\Q}^{2} \nonumber \\
	= & - \dfrac{1}{2} \gamma \left\lVert z_{k+1} - z_{k} \right\rVert _{\Q}^{2} - \dfrac{1}{2} \gamma \left( \gamma - 1 \right) \left\lVert x_{k+1} - x_{k} \right\rVert _{\Q}^{2} \nonumber \\
	& + \dfrac{1}{2} \left( \gamma - 1 \right) \left\lVert z_{k+1} - z_{k} - \left( x_{k+1} - x_{k} \right) \right\rVert _{\Q}^{2} \nonumber \\
	\leq & - \dfrac{1}{2} \gamma t_{k+1}^{2} \left\lVert x_{k+1} - y_{k} \right\rVert _{\Q}^{2} - \dfrac{1}{2} \gamma \left( \gamma - 1 \right) \left\lVert x_{k+1} - x_{k} \right\rVert _{\Q}^{2},
	\end{align*}
which we combine with \eqref{dec:inn:x:pre} to obtain
	\begin{align}
	& \ t_{k+1} \left\langle y_{k} - x_{k+1} , \gamma \left( x_{k+1} - x \right) + \left( t_{k+1} - 1 \right) \left( x_{k+1} - x_{k} \right) \right\rangle _{\Q} \nonumber \\
	\leq & - \dfrac{1}{2} \gamma t_{k+1}^{2} \left\lVert x_{k+1} - y_{k} \right\rVert _{\Q}^{2} - \dfrac{1}{2} \left\lVert z_{k+1}^{\gamma} - \gamma x \right\rVert _{\Q}^{2} + \dfrac{1}{2} \left\lVert z_{k}^{\gamma} - \gamma x \right\rVert _{\Q}^{2} - \dfrac{1}{2} \gamma \left( 1 - \gamma \right) \left\lVert x_{k+1} - x \right\rVert _{\Q}^{2} \nonumber \\
	&  + \dfrac{1}{2} \gamma \left( 1 - \gamma \right) \left\lVert x_{k} - x \right\rVert _{\Q}^{2} + \left( \gamma - 1 \right) \left( t_{k+1} - 1 \right) \left\lVert x_{k+1} - x_{k} \right\rVert _{\Q}^{2} . \label{dec:inn:x}
	\end{align}
By using the same technique, we can derive that
	\begin{align}
	& \ \dfrac{1}{\rho} t_{k+1} \left\langle \mu_{k} - \lambda_{k+1} , \gamma \left( \lambda_{k+1} - \lambda \right) + \left( t_{k+1} - 1 \right) \left( \lambda_{k+1} - \lambda_{k} \right) \right\rangle \nonumber \\
	\leq & - \dfrac{\gamma}{2 \rho} t_{k+1}^{2} \left\lVert \lambda_{k+1} - \mu_{k} \right\rVert ^{2} - \dfrac{1}{2 \rho} \left\lVert \nu_{k+1}^{\gamma} - \gamma \lambda \right\rVert ^{2} + \dfrac{1}{2} \left\lVert \nu_{k}^{\gamma} - \gamma \lambda \right\rVert ^{2} - \dfrac{\gamma}{2 \rho} \left( 1 - \gamma \right) \left\lVert \lambda_{k+1} - \lambda \right\rVert ^{2} \nonumber \\
	& + \dfrac{\gamma}{2 \rho} \left( 1 - \gamma \right) \left\lVert \lambda_{k} - \lambda \right\rVert ^{2} + \dfrac{1}{\rho} \left( \gamma - 1 \right) \left( t_{k+1} - 1 \right) \left\lVert \lambda_{k+1} - \lambda_{k} \right\rVert ^{2} . \label{dec:inn:lambda}
	\end{align}
Plugging \eqref{dec:inn:mix}, \eqref{dec:inn:x} and \eqref{dec:inn:lambda} into \eqref{dec:pre}, and taking into consideration the fact that $\gamma \in \left( 0 , 1 \right]$, gives the desired statement.
\end{proof}

Next we record some direct consequences of the above estimate.
\begin{prop}
	\label{prop:sum}
	Let $\left\lbrace \left( x_{k} , \lambda_{k} \right) \right\rbrace _{k \geq 0}$ be the sequence generated by Algorithm \ref{algo:fast} and  $\left( x_{*} , \lambda_{*} \right) \in \sol$. Then the sequence $\left\lbrace \E_{k} \left( x_{*} , \lambda_{*} \right) \right\rbrace _{k \geq 1}$ is nonincreasing and the following statements are true
	\begin{subequations}
		\begin{align*}
		\left( 1- \frac{m}{\gamma} \right) \mysum_{k \geq 1} t_{k} \begin{pmatrix}
		\Lb \left( x_{k} , \lambda_{*} \right) - \Lb \left( x_{*} , \lambda_{k} \right)
		\end{pmatrix} & < + \infty \\
		\mysum_{k \geq 1} t_{k+1} \left( \beta \left\lVert Ax_{k+1} - b \right\rVert ^{2} + \dfrac{1}{L} \left\lVert \nabla f \left( y_{k} \right) - \nabla f \left( x_{*} \right) \right\rVert ^{2} \right) & < + \infty \\
		\mysum_{k \geq 1} t_{k+1} \left( t_{k+1} - 1 \right) \left( \beta \left\lVert Ax_{k+1} - Ax_{k} \right\rVert ^{2} + \dfrac{1}{L} \left\lVert \nabla f \left( y_{k} \right) - \nabla f \left( x_{k} \right) \right\rVert ^{2} \right) & < + \infty \\
		\left( 1 - \gamma \right) \mysum_{k \geq 1} \left( t_{k+1} - 1 \right)  \left\lVert x_{k+1} - x_{k} \right\rVert _{\Q}^{2} & < + \infty \\
(1 - \gamma)\mysum_{k \geq 1} \left( 2t_{k+1} - 1 \right) \left\lVert \lambda_{k+1} - \lambda_{k} \right\rVert ^{2} & < + \infty \\
		\mysum_{k \geq 1} t_{k+1}^{2} \left( \left\lVert x_{k+1} - y_{k} \right\rVert _{\gamma \Q - L \Id}^{2} + \dfrac{\gamma}{\rho} \left\lVert \lambda_{k+1} - \mu_{k} \right\rVert ^{2} \right) & < + \infty .
		\end{align*}
	\end{subequations}
\end{prop}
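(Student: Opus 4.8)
The plan is to evaluate the fundamental estimate \eqref{dec:inq} at the pair $\left( x , \lambda \right) = \left( x_{*} , \lambda_{*} \right) \in \sol$ — which is admissible since $x_{*} \in \Fea$ — and to check that, after this substitution, every term on the right-hand side other than $\E_{k}\left(x_{*},\lambda_{*}\right)$ is non-positive; all the claimed properties then follow from Lemma \ref{lem:quasi-Fej}. The only term whose sign is not immediate is the multiple of $\Lb\left(x_{k},\lambda_{*}\right) - \Lb\left(x_{*},\lambda_{k}\right)$. First I would observe, using \eqref{intro:saddle} together with the saddle-point inequality, that $\Lb\left(x_{*},\lambda_{k}\right) = \Lag\left(x_{*},\lambda_{*}\right)$ whereas $\Lb\left(x_{k},\lambda_{*}\right) = \Lag\left(x_{k},\lambda_{*}\right) + \frac{\beta}{2}\left\lVert Ax_{k}-b\right\rVert^{2} \geq \Lag\left(x_{k},\lambda_{*}\right) \geq \Lag\left(x_{*},\lambda_{*}\right)$, so that $\Lb\left(x_{k},\lambda_{*}\right) - \Lb\left(x_{*},\lambda_{k}\right) \geq 0$. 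Second, I would bound the coefficient via \eqref{assume:t-k+}, which yields $t_{k+1}^{2} - t_{k}^{2} \leq m t_{k+1}$: since $\left\lbrace t_{k}\right\rbrace_{k\geq1}$ is nondecreasing with $t_{k}\geq1$ and $0 < m \leq \gamma \leq 1$,
\[
t_{k+1}^{2} - t_{k+1} - t_{k}^{2} + \left(1-\gamma\right)t_{k} \;\leq\; -\left(1-m\right)t_{k+1} + \left(1-\gamma\right)t_{k} \;\leq\; -\left(1-m\right)t_{k} + \left(1-\gamma\right)t_{k} \;=\; -\left(\gamma-m\right)t_{k} \;\leq\; 0 .
\]
Multiplying this non-positive coefficient by the non-negative quantity above produces a term bounded above by $-\left(\gamma-m\right)t_{k}\bigl(\Lb\left(x_{k},\lambda_{*}\right) - \Lb\left(x_{*},\lambda_{k}\right)\bigr)$.

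For the eight remaining terms of \eqref{dec:inq} I would record that they are manifestly non-positive: $\beta\geq0$, $t_{k+1}\geq t_{k}\geq1$ and $\gamma\in\left(0,1\right]$ handle the feasibility-measure, gradient-difference and $\left\lVert\lambda_{k+1}-\lambda_{k}\right\rVert^{2}$ contributions, while \eqref{ineq:Q} (a consequence of \eqref{assume:sigma}) gives $\gamma\Q - L\Id \in \sS_{+}\left(\sH\right)$, and hence also $\Q \succcurlyeq \frac{L}{\gamma}\Id \succcurlyeq 0$, so that $\left\lVert x_{k+1}-y_{k}\right\rVert_{\gamma\Q - L\Id}^{2}\geq0$ and $\left\lVert x_{k+1}-x_{k}\right\rVert_{\Q}^{2}\geq0$. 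Collecting everything, \eqref{dec:inq} at $\left(x_{*},\lambda_{*}\right)$ reads
\[
\E_{k+1}\left(x_{*},\lambda_{*}\right) \;\leq\; \E_{k}\left(x_{*},\lambda_{*}\right) - b_{k} \qquad \forall k\geq1 ,
\]
where $b_{k}\geq0$ denotes the sum of $\left(\gamma-m\right)t_{k}\bigl(\Lb\left(x_{k},\lambda_{*}\right)-\Lb\left(x_{*},\lambda_{k}\right)\bigr)$ and of the eight explicitly negative terms of \eqref{dec:inq} with their signs reversed.

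Finally I would invoke Lemma \ref{lem:quasi-Fej} with $a_{k}:=\E_{k}\left(x_{*},\lambda_{*}\right)$ — which is bounded below by $0$ thanks to the remark following the definition of the energy function — $b_{k}$ as above and $d_{k}:=0$: this yields at once that $\left\lbrace\E_{k}\left(x_{*},\lambda_{*}\right)\right\rbrace_{k\geq1}$ converges and that $\sum_{k\geq1}b_{k}<+\infty$, while the monotonicity of $\left\lbrace\E_{k}\left(x_{*},\lambda_{*}\right)\right\rbrace_{k\geq1}$ is read directly off the displayed inequality. Since $b_{k}$ is a finite sum of non-negative quantities, $\sum_{k\geq1}b_{k}<+\infty$ forces the summability of each of its summands; grouping the gradient and feasibility terms appropriately and rescaling by the relevant positive constants, these are precisely the six displayed estimates, the first one written as $\left(1-\frac{m}{\gamma}\right)\sum_{k\geq1}t_{k}\bigl(\Lb\left(x_{k},\lambda_{*}\right)-\Lb\left(x_{*},\lambda_{k}\right)\bigr)<+\infty$. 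The whole argument is essentially Lyapunov/quasi-Fej\'{e}r bookkeeping built on Proposition \ref{prop:dec}; the single delicate point is the sign analysis of the coefficient of $\Lb\left(x_{k},\lambda_{*}\right)-\Lb\left(x_{*},\lambda_{k}\right)$ performed above, which is exactly where the assumptions $t_{1}=1$, $t_{k+1}^{2}-mt_{k+1}\leq t_{k}^{2}$, $\left\lbrace t_{k}\right\rbrace_{k\geq1}$ nondecreasing and $m\leq\gamma\leq1$ are used.
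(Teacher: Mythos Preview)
Your proof is correct and follows essentially the same approach as the paper: specialize the estimate \eqref{dec:inq} of Proposition~\ref{prop:dec} at $(x_{*},\lambda_{*})\in\sol$, use \eqref{assume:t-k+} together with the monotonicity of $\{t_{k}\}_{k\geq 1}$ and $m\leq\gamma$ to show that the coefficient of the primal-dual gap is bounded above by $-(\gamma-m)t_{k}$, observe that all remaining terms are non-positive, and conclude via Lemma~\ref{lem:quasi-Fej}. The paper's argument is terser but identical in substance; your version simply makes the sign checks for each summand explicit.
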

\begin{proof}
	Since $\left\lbrace t_{k} \right\rbrace _{k \geq 1}$ is an nondecreasing sequence that satisfies \eqref{assume:t-k+} and $0 < m \leq \gamma \leq 1$, we have for every $k \geq 1$
	\begin{equation*}
	t_{k+1}^{2} - t_{k+1} - t_{k}^{2} + \left( 1 - \gamma \right) t_{k}
	\leq \left( m - 1 \right) t_{k+1} + \left( 1 - \gamma \right) t_{k}
	\leq \left( m - \gamma \right) t_{k} \leq 0 .
	\end{equation*}	
	Moreover, as $\left( x_{*} , \lambda_{*} \right) \in \sol$, we must have $x_{*} \in \Fea$ and $\Lb \left( x_{k} , \lambda_{*} \right) - \Lb \left( x_{*} , \lambda_{k} \right) \geq 0$ for every $k \geq 1$ due to \eqref{intro:saddle}.
	Combining these observations, we deduce from the inequality \eqref{dec:inq} applied to $(x,\lambda)=(x_*,\lambda_*)$ that for every $k \geq 1$
	\begin{align}
	\E_{k+1} \left( x_{*} , \lambda_{*} \right)
	\leq & \  \E_{k} \left( x_{*} , \lambda_{*} \right)
	- \left( \gamma - m \right) t_{k} \begin{pmatrix}
	\Lb \left( x_{k} , \lambda_{*} \right) - \Lb \left( x_{*} , \lambda_{k} \right)
	\end{pmatrix} \nonumber \\
	& - \dfrac{\beta \gamma}{2} t_{k+1} \left\lVert Ax_{k+1} - b \right\rVert ^{2}
	- \dfrac{\beta}{2} t_{k+1} \left( t_{k+1} - 1 \right) \left\lVert Ax_{k+1} - Ax_{k} \right\rVert ^{2} \nonumber \\
	& - \dfrac{\gamma}{2L} t_{k+1} \left\lVert \nabla f \left( y_{k} \right) - \nabla f \left( x_{*} \right) \right\rVert ^{2}
	- \dfrac{1}{2L} t_{k+1} \left( t_{k+1} - 1 \right) \left\lVert \nabla f \left( y_{k} \right) - \nabla f \left( x_{k} \right) \right\rVert ^{2} \nonumber \\
	& - \left( 1 - \gamma \right) \left( t_{k+1} - 1 \right) \left\lVert x_{k+1} - x_{k} \right\rVert _{\Q}^{2}
	- \dfrac{1 - \gamma}{2 \rho} \left( 2t_{k+1} - 1 \right) \left\lVert \lambda_{k+1} - \lambda_{k} \right\rVert ^{2} \nonumber \\
	&  - \dfrac{1}{2} t_{k+1}^{2}  \left\lVert x_{k+1} - y_{k} \right\rVert _{\gamma \Q - L \Id}^{2}
	- \dfrac{\gamma}{2 \rho} t_{k+1}^{2} \left\lVert \lambda_{k+1} - \mu_{k} \right\rVert ^{2}. \label{sum:inq}
	\end{align}
By applying Lemma \ref{lem:quasi-Fej} we obtain all conclusions.
\end{proof}

\begin{rmk}
	\label{rmk:bnd}
	Since the sequence $\left\lbrace \E_{k} \left( x_{*} , \lambda_{*} \right) \right\rbrace _{k \geq 1}$ is nonincreasing and for every $k \geq 1$
	\begin{equation}\label{prop-tk}
	\gamma t_{k}^{2} \leq t_{k} \left( t_{k} - 1 + \gamma \right) \Leftrightarrow t_{k} \geq 1 ,
	\end{equation}
	we deduce that
	\begin{align*}
	\dfrac{\beta \gamma}{2} t_{k}^{2} \left\lVert Ax_{k} - b \right\rVert ^{2}
	& \leq \gamma t_{k}^{2} \begin{pmatrix}
	\Lb \left( x_{k} , \lambda_{*} \right) - \Lb \left( x_{*} , \lambda_{k} \right)
	\end{pmatrix} \nonumber \\
	& \leq t_{k} \left( t_{k} - 1 + \gamma \right) \begin{pmatrix}
	\Lb \left( x_{k} , \lambda_{*} \right) - \Lb \left( x_{*} , \lambda_{k} \right)
	\end{pmatrix} \nonumber \\
	& \leq \E_{k} \left( x_{*} , \lambda_{*} \right) \leq \cdots \leq \E_{1} \left( x_{*} , \lambda_{*} \right) .
	\end{align*}
	Consequently, for every $k \geq 1$ we have
	\begin{align}
	\label{bnd:pre}
	& \ \ t_{k}^{2} \Big( {\cal L}\left( x_{k} , \lambda_{*} \right) - {\cal L} \left( x_{*} , \lambda_{k} \right) \Big) \leq t_{k}^{2} \Big ( \Lb \left( x_{k} , \lambda_{*} \right) - \Lb \left( x_{*} , \lambda_{k} \right) \Big) \leq \dfrac{\E_{1} \left( x_{*} , \lambda_{*} \right)}{\gamma} \nonumber \\
\textrm{and, when }  \beta >0, & \ \ t_{k} \left\lVert Ax_{k} - b \right\rVert \leq \sqrt{\dfrac{2 \E_{1} \left( x_{*} , \lambda_{*} \right)}{\beta \gamma}}.
	\end{align}
\end{rmk}

\begin{rmk}
Recall that from Proposition \ref{prop:sum} we have
	\begin{equation}\label{sum}
	\mysum_{k \geq 1} \left( t_{k+1} - 1 \right) \left( \left\lVert x_{k+1} - x_{k} \right\rVert _{\Q}^{2} + \dfrac{1}{2\rho} \left\lVert \lambda_{k+1} - \lambda_{k} \right\rVert ^{2} \right) < + \infty,
	\end{equation}
whenever $\gamma  < 1$. Taking into account the way $\gamma$ has arisen in the context of the dynamical system \eqref{ds:PD-AVD} (see \eqref{ds:gamma}), this corresponds to
\begin{equation*}
	\gamma = \dfrac{1}{\theta \left( \alpha - 1 \right)} < 1
	\Leftrightarrow \dfrac{1}{\alpha - 1} < \theta.
	\end{equation*}
In the continuous case it has been proved (see \cite[Theorem 3.2]{Bot-Nguyen}) that, if $\dfrac{1}{\alpha - 1} < \theta$, then
 	\begin{equation*}
	\int_{t_{0}}^{+ \infty} t \left\lVert \left( \dot{x} \left( t \right) , \dot{\lambda} \left( t \right) \right) \right\rVert ^{2} < +\infty ,
	\end{equation*}
which can be seen as the continuous counterpart of \eqref{sum}. Both statements play a crucial role in the proof of the convergence of the sequence of iterates generated by Algorithm \ref{algo:fast}  and of the trajectory generated by \eqref{ds:PD-AVD}, respectively.
\end{rmk}

The following result, which complements the statements of Proposition \ref{prop:sum}, will also play a crucial role in the proof of the convergence of the sequence of iterates.

\begin{prop}
	\label{prop:dual}
	Let $\left\lbrace \left( x_{k} , \lambda_{k} \right) \right\rbrace _{k \geq 0}$ be the sequence generated by Algorithm \ref{algo:fast} with
$$0 < \sigma < \frac{\gamma}{L + \gamma \beta \|A\|^2},$$
and  $\left( x_{*} , \lambda_{*} \right) \in \sol$. Then the following statements are true
	\begin{subequations}
		\begin{align}
		\left( 1 - \dfrac{m}{\gamma} \right) \mysum_{k \geq 1} t_{k} \left\lVert A^{*} \left( \lambda_{k} - \lambda_{*} \right) \right\rVert ^{2} & < + \infty, \label{dual:sum:1} \\
		t_{k+1} \left( t_{k+1} - 1 \right) ^{2} \mysum_{k \geq 1} \left\lVert A^{*} \left( \lambda_{k+1} - \lambda_{k} \right) \right\rVert ^{2} & < + \infty. \label{dual:sum:2}
		\end{align}
	\end{subequations}
	In addition, there exists $\CAstar > 0$ such that for every $k \geq 1$
	\begin{equation*}
	\left\lVert A^{*} \left( \lambda_{k} - \lambda_{*} \right) \right\rVert \leq \dfrac{\CAstar}{t_{k}} .
	\end{equation*}
\end{prop}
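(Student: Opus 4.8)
The plan is to turn the optimality characterization \eqref{Fej:opt-x} of $x_{k+1}$ into a scalar recursion for $u_{k} := A^{*}\left( \lambda_{k} - \lambda_{*} \right)$ and then apply the quasi-Fejér Lemma~\ref{lem:quasi-Fej} to $t_{k}^{2}\left\lVert u_{k} \right\rVert ^{2}$. First, in \eqref{Fej:opt-x} I insert $\nu_{k+1}^{\gamma} = \gamma \lambda_{k+1} + \left( t_{k+1} - 1 \right)\left( \lambda_{k+1} - \lambda_{k} \right)$ from \eqref{algo:nu-gamma}, add $A^{*}\lambda_{*}$ to both sides, and use the optimality relations $\nabla f \left( x_{*} \right) = - A^{*}\lambda_{*}$, $Ax_{*} = b$, the splitting $A^{*}\left( Ay_{k} - b \right) = A^{*}A\left( y_{k} - x_{k+1} \right) + A^{*}\left( Ax_{k+1} - b \right)$ and the definition \eqref{define:Q} of $\Q$, to obtain for every $k \geq 1$
\begin{equation*}
t_{k+1} u_{k+1} - \left( t_{k+1} - \gamma \right) u_{k} = R_{k}, \qquad \text{equivalently} \qquad t_{k+1}\left( u_{k+1} - u_{k} \right) = R_{k} - \gamma u_{k},
\end{equation*}
where $R_{k} := \gamma \bigl( \Q\left( y_{k} - x_{k+1} \right) - \beta A^{*}\left( Ax_{k+1} - b \right) - \left( \nabla f \left( y_{k} \right) - \nabla f \left( x_{*} \right) \right) \bigr)$.

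Next, I control $R_{k}$ by splitting $R_{k} = R_{k}' + R_{k}''$, with $R_{k}' := \gamma \bigl( \Q\left( y_{k} - x_{k+1} \right) - \left( \nabla f \left( y_{k} \right) - \nabla f \left( x_{k+1} \right) \right) \bigr)$ and $R_{k}'' := - \gamma \bigl( \left( \nabla f \left( x_{k+1} \right) - \nabla f \left( x_{*} \right) \right) + \beta A^{*}\left( Ax_{k+1} - b \right) \bigr)$. Here the strict inequality $\sigma < \gamma /\left( L + \gamma \beta \left\lVert A \right\rVert ^{2} \right)$ is essential: by \eqref{ineq:Q} it makes $\gamma \Q - L\Id \succcurlyeq c\Id$ with $c>0$, so the last estimate in Proposition~\ref{prop:sum} upgrades to $\sum_{k \geq 1} t_{k+1}^{2}\left\lVert y_{k} - x_{k+1} \right\rVert ^{2} < + \infty$, and Lipschitz continuity of $\nabla f$ then gives $\sum_{k \geq 1} t_{k+1}^{2}\left\lVert R_{k}' \right\rVert ^{2} < + \infty$. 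For $R_{k}''$, the first inequality in \eqref{pre:f-bound} applied at $x_{k+1}$ and $x_{*}$, together with the $\bO\left( 1/t_{k}^{2} \right)$ bounds in \eqref{bnd:pre}, gives the pointwise estimate $\left\lVert R_{k}'' \right\rVert = \bO\left( 1/t_{k+1} \right)$ (the $\beta$-term being absent when $\beta = 0$); moreover, combining $\sum_{k \geq 1} t_{k+1}\left( \beta \left\lVert Ax_{k+1} - b \right\rVert ^{2} + \tfrac{1}{L}\left\lVert \nabla f \left( y_{k} \right) - \nabla f \left( x_{*} \right) \right\rVert ^{2} \right) < + \infty$ from Proposition~\ref{prop:sum} with the $\ell^{2}$-summability of $\left\lVert y_{k} - x_{k+1} \right\rVert$ just obtained yields $\sum_{k \geq 1} t_{k+1}\left\lVert R_{k}'' \right\rVert ^{2} < + \infty$.

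Then I square the first form of the recursion and invoke \eqref{assume:t-k+} with $\gamma \leq t_{k+1}$ to get $\left( t_{k+1} - \gamma \right)^{2} \leq t_{k}^{2} - b_{k}$, where $b_{k} := \left( 2\gamma - m \right)t_{k+1} - \gamma^{2} \geq 0$ and $b_{k} \geq \tfrac{\gamma}{2}t_{k+1}$ for all large $k$; the cross term $2\left( t_{k+1} - \gamma \right)\left\langle u_{k}, R_{k} \right\rangle$ is absorbed by Young's inequality into $\tfrac12 b_{k}\left\lVert u_{k} \right\rVert ^{2}$ plus a summable remainder, using the two weighted $\ell^{2}$-bounds above for $R_{k}'$ and $R_{k}''$ separately, while $\left\lVert R_{k} \right\rVert ^{2} \leq 2\left\lVert R_{k}' \right\rVert ^{2} + 2\left\lVert R_{k}'' \right\rVert ^{2}$ is summable as well. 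Thus $a_{k} := t_{k}^{2}\left\lVert u_{k} \right\rVert ^{2}$ satisfies \eqref{quasi-Fej:inq}, and Lemma~\ref{lem:quasi-Fej} shows that $\left\lbrace a_{k} \right\rbrace$ converges, hence is bounded — which is precisely $\left\lVert A^{*}\left( \lambda_{k} - \lambda_{*} \right) \right\rVert \leq \CAstar / t_{k}$ — and that $\sum_{k \geq 1} b_{k}\left\lVert u_{k} \right\rVert ^{2} < + \infty$, giving $\sum_{k \geq 1} t_{k}\left\lVert u_{k} \right\rVert ^{2} < + \infty$ (since $b_{k} \geq \tfrac{\gamma}{2}t_{k}$ eventually) and in particular \eqref{dual:sum:1}. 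Finally, from $t_{k+1}\left( u_{k+1} - u_{k} \right) = R_{k} - \gamma u_{k}$ one gets $t_{k+1}^{2}\left\lVert u_{k+1} - u_{k} \right\rVert ^{2} \leq 2\left\lVert R_{k} \right\rVert ^{2} + 2\gamma^{2}\left\lVert u_{k} \right\rVert ^{2}$, hence $t_{k+1}\left( t_{k+1} - 1 \right)^{2}\left\lVert A^{*}\left( \lambda_{k+1} - \lambda_{k} \right) \right\rVert ^{2} \leq 2 t_{k+1}\left\lVert R_{k} \right\rVert ^{2} + 2\gamma^{2} t_{k+1}\left\lVert u_{k} \right\rVert ^{2}$, and summing yields \eqref{dual:sum:2}.

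I expect the main obstacle to be the residual analysis of the previous step: one must recognize that the slowly-decaying part $R_{k}''$, although only $\bO\left( 1/t_{k} \right)$ pointwise, is square-summable against the weight $t_{k+1}$, which is exactly what lets the quasi-Fejér inequality close. This is where the feasibility/gradient estimates of Proposition~\ref{prop:sum} that are quadratic in the residuals are exploited, and where the strict step-size restriction is genuinely needed (only in order to pass from the $\gamma \Q - L\Id$-seminorm estimate on $y_{k} - x_{k+1}$ to a full norm estimate); everything else is routine bookkeeping with Young's inequality and Lemma~\ref{lem:quasi-Fej}.
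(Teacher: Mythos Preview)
Your argument is correct and reaches all three conclusions, but it is organized differently from the paper's proof. The paper does not square the vector recursion and then absorb the cross term by Young's inequality; instead, it keeps the combination $A^{*}\bigl(\tfrac{1}{\gamma}\nu_{k+1}^{\gamma}-\lambda_{*}\bigr)$ intact, first shows $\sum_{k\ge1}t_{k+1}\bigl\lVert A^{*}\bigl(\tfrac{1}{\gamma}\nu_{k+1}^{\gamma}-\lambda_{*}\bigr)\bigr\rVert^{2}<+\infty$ directly from Proposition~\ref{prop:sum} (this is your $\sum_{k}t_{k+1}\lVert R_{k}\rVert^{2}<+\infty$, so your splitting $R_{k}=R_{k}'+R_{k}''$ and the pointwise $\bO(1/t_{k+1})$ bound on $R_{k}''$ are not actually needed), and then expands that norm via the affine identity \eqref{pre:sum-1} using $\tfrac{1}{\gamma}\nu_{k+1}^{\gamma}-\lambda_{*}=\bigl(1+\tfrac{1}{\gamma}(t_{k+1}-1)\bigr)(\lambda_{k+1}-\lambda_{*})-\tfrac{1}{\gamma}(t_{k+1}-1)(\lambda_{k}-\lambda_{*})$. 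This produces an \emph{exact} recursion for $a_{k}:=t_{k}\bigl(1+\tfrac{1}{\gamma}(t_{k}-1)\bigr)\lVert u_{k}\rVert^{2}$ in which both \eqref{dual:sum:1} and \eqref{dual:sum:2} appear simultaneously as the ``$b_{k}$''-part of Lemma~\ref{lem:quasi-Fej}, with no Young inequality and no loss of constants. Your route, by contrast, tracks $t_{k}^{2}\lVert u_{k}\rVert^{2}$, trades the cross term against half of $b_{k}$, and then deduces \eqref{dual:sum:2} in a second step from \eqref{dual:sum:1}; a pleasant by-product is that your $b_{k}=(2\gamma-m)t_{k+1}-\gamma^{2}$ stays comparable to $t_{k+1}$ even when $m=\gamma$, so you actually obtain the unconditional bound $\sum_{k}t_{k}\lVert A^{*}(\lambda_{k}-\lambda_{*})\rVert^{2}<+\infty$, which is slightly stronger than the statement \eqref{dual:sum:1} in that borderline case.
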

\begin{proof}
From \eqref{Fej:opt-x}, after rearranging some terms, we have for every $k \geq 1$
	\begin{align*}
	A^{*} \left( \dfrac{1}{\gamma} \nu_{k+1}^{\gamma} - \lambda_{*} \right) 
	 = & \ \dfrac{1}{\gamma} \left( 1 - \gamma \right) A^{*} \left( \lambda_{k} - \lambda_{k+1} \right)
	+ \nabla f \left( x_{*} \right) - \nabla f \left( y_{k} \right)
	+ \dfrac{1}{\sigma} \left( y_{k} - x_{k+1} \right) \nonumber \\
	& + \beta A^{*} A \left( x_{k+1} - y_{k} \right) - \beta A^{*} \left( Ax_{k+1} - b \right) .
	\end{align*}
It follows from Proposition \ref{prop:sum}, by using \eqref{ineq:Q} and the fact that $t_k \geq 1$, that for every $k \geq 1$ 
	\begin{align*}
	& \mysum_{k \geq 1} t_{k+1} \left\lVert A^{*} \left( \dfrac{1}{\gamma} \nu_{k+1}^{\gamma} - \lambda_{*} \right) \right\rVert ^{2} \nonumber \\
	\leq \ 	& \dfrac{5}{\gamma^{2}} \left( 1 - \gamma \right) ^{2} \left\lVert A \right\rVert ^{2} \mysum_{k \geq 1} t_{k+1} \left\lVert \lambda_{k+1} - \lambda_{k} \right\rVert ^{2}
	+ 5 \mysum_{k \geq 1} t_{k+1} \left\lVert \nabla f \left( y_{k} \right) - \nabla f \left( x_{*} \right) \right\rVert ^{2} \nonumber \\
	& + \dfrac{5}{\sigma^{2}} \mysum_{k \geq 1} t_{k+1} \left\lVert x_{k+1} - y_{k} \right\rVert ^{2} 
	+ 5 \beta^{2} \left\lVert A^{*} A \right\rVert ^{2} \mysum_{k \geq 1} t_{k+1} \left\lVert x_{k+1} - y_{k} \right\rVert ^{2} \nonumber \\
	& + 5 \beta^{2} \left\lVert A \right\rVert ^{2} \mysum_{k \geq 1} t_{k+1} \left\lVert Ax_{k+1} - b \right\rVert ^{2} < + \infty .
	\end{align*}
According to \eqref{algo:nu-gamma} we have for every $k \geq1$
	\begin{equation*}
	A^{*} \left( \dfrac{1}{\gamma} \nu_{k+1}^{\gamma} - \lambda_{*} \right)
	= \left( 1 + \dfrac{1}{\gamma} \left( t_{k+1} - 1 \right) \right) A^{*} \left( \lambda_{k+1} - \lambda_{*} \right) - \dfrac{1}{\gamma} \left( t_{k+1} - 1 \right) A^{*} \left( \lambda_{k} - \lambda_{*} \right),
	\end{equation*}
hence, by applying the identity \eqref{pre:sum-1}, we get
	\begin{align}
	\left\lVert A^{*} \left( \dfrac{1}{\gamma} \nu_{k+1}^{\gamma} - \lambda_{*} \right) \right\rVert ^{2}
	= & \left( 1 + \dfrac{1}{\gamma} \left( t_{k+1} - 1 \right) \right) \left\lVert A^{*} \left( \lambda_{k+1} - \lambda_{*} \right) \right\rVert ^{2} - \dfrac{1}{\gamma} \left( t_{k+1} - 1 \right) \left\lVert A^{*} \left( \lambda_{k} - \lambda_{*} \right) \right\rVert ^{2} \nonumber \\
	& + \dfrac{1}{\gamma} \left( t_{k+1} - 1 \right) \left( 1 + \dfrac{1}{\gamma} \left( t_{k+1} - 1 \right) \right) \left\lVert A^{*} \left( \lambda_{k+1} - \lambda_{k} \right) \right\rVert ^{2} . \label{dual:norm-1}
	\end{align}
	On the other hand, it follows from condition \eqref{assume:t-k+} and the fact that $\left\lbrace t_{k} \right\rbrace _{k \geq 1}$ is nondecreasing that for every $k \geq 1$
	\begin{align}
	\label{dual:dt}
	\dfrac{1}{\gamma} t_{k+1} \left( t_{k+1} - 1 \right) - t_{k} \left( 1 + \dfrac{1}{\gamma} \left( t_{k} - 1 \right) \right)
	& = \dfrac{1}{\gamma} \left( t_{k+1}^{2} - t_{k+1} - t_{k}^{2} + t_{k} \right) - t_{k} \nonumber \\
	& \leq \dfrac{1}{\gamma} \left( \left( m - 1 \right) t_{k+1} + t_{k} \right) - t_{k} \nonumber \\
	& = \dfrac{m - 1}{\gamma} \left( t_{k+1} - t_{k} \right) + \left( \dfrac{m}{\gamma} - 1 \right) t_{k} \nonumber \\
	& \leq \left( \dfrac{m}{\gamma} - 1 \right) t_{k} ,
	\end{align}
Combining \eqref{dual:norm-1} and \eqref{dual:dt}, it yields for every $k \geq 1$
	\begin{align*}
	& t_{k+1} \left( 1 + \dfrac{1}{\gamma} \left( t_{k+1} - 1 \right) \right) \left\lVert A^{*} \left( \lambda_{k+1} - \lambda_{*} \right) \right\rVert ^{2} \nonumber \\
	= & \ t_{k} \left( 1 + \dfrac{1}{\gamma} \left( t_{k} - 1 \right) \right) \left\lVert A^{*} \left( \lambda_{k} - \lambda_{*} \right) \right\rVert ^{2}
	+ \left( \dfrac{1}{\gamma} t_{k+1} \left( t_{k+1} - 1 \right) - t_{k} \left( 1 + \dfrac{1}{\gamma} \left( t_{k} - 1 \right) \right) \right) \left\lVert A^{*} \left( \lambda_{k} - \lambda_{*} \right) \right\rVert ^{2} \nonumber \\
	& - \dfrac{1}{\gamma} t_{k+1} \left( t_{k+1} - 1 \right) \left( 1 + \dfrac{1}{\gamma} \left( t_{k+1} - 1 \right) \right) \left\lVert A^{*} \left( \lambda_{k+1} - \lambda_{k} \right) \right\rVert ^{2}
	+ t_{k+1}\left\lVert A^{*} \left( \dfrac{1}{\gamma} \nu_{k+1}^{\gamma} - \lambda_{*} \right) \right\rVert ^{2} \nonumber \\
	\leq \ 	& t_{k} \left( 1 + \dfrac{1}{\gamma} \left( t_{k} - 1 \right) \right) \left\lVert A^{*} \left( \lambda_{k} - \lambda_{*} \right) \right\rVert ^{2}
	- \left( 1 - \dfrac{m}{\gamma} \right) t_{k} \left\lVert A^{*} \left( \lambda_{k} - \lambda_{*} \right) \right\rVert ^{2} \nonumber \\
	& - \dfrac{1}{\gamma^{2}} t_{k+1} \left( t_{k+1} - 1 \right) ^{2} \left\lVert A^{*} \left( \lambda_{k+1} - \lambda_{k} \right) \right\rVert ^{2}
	+ t_{k+1} \left\lVert A^{*} \left( \dfrac{1}{\gamma} \nu_{k+1}^{\gamma} - \lambda_{*} \right) \right\rVert ^{2} .
	\end{align*}
	We are in the setting of inequality \eqref{quasi-Fej:inq} with
	\begin{align*}
	a_{k} & := t_{k} \left( 1 + \dfrac{1}{\gamma} \left( t_{k} - 1 \right) \right) \left\lVert A^{*} \left( \lambda_{k} - \lambda_{*} \right) \right\rVert ^{2} \geq 0 , \nonumber \\
	b_{k} & := \left( 1 - \dfrac{m}{\gamma} \right) t_{k} \left\lVert A^{*} \left( \lambda_{k} - \lambda_{*} \right) \right\rVert ^{2} + \dfrac{1}{\gamma^{2}} t_{k+1} \left( t_{k+1} - 1 \right) ^{2} \left\lVert A^{*} \left( \lambda_{k+1} - \lambda_{k} \right) \right\rVert ^{2} \geq 0 , \nonumber \\
	d_{k} & := t_{k+1} \left\lVert A^{*} \left( \dfrac{1}{\gamma} \nu_{k+1}^{\gamma} - \lambda_{*} \right) \right\rVert ^{2} \geq 0,
	\end{align*}
for every $k \geq 1$. According to Lemma \ref{lem:quasi-Fej}, \eqref{dual:sum:1} and \eqref{dual:sum:2} are fulfilled and the sequence $\left\lbrace t_{k} \left( 1 + \dfrac{1}{\gamma} \left( t_{k} - 1 \right) \right) \left\lVert A^{*} \left( \lambda_{k} - \lambda_{*} \right) \right\rVert ^{2} \right\rbrace _{k \geq 1}$ is convergent, therefore it is bounded. Consequently, there exists $\CAstar > 0$ such that for every $k \geq 1$
	\begin{equation*}
	t_{k}^{2} \left\lVert A^{*} \left( \lambda_{k} - \lambda_{*} \right) \right\rVert ^{2}
	\leq t_{k} \left( 1 + \dfrac{1}{\gamma} \left( t_{k} - 1 \right) \right) \left\lVert A^{*} \left( \lambda_{k} - \lambda_{*} \right) \right\rVert ^{2} \leq \CAstar,
	\end{equation*}
which provides the conclusion.
\end{proof}

\subsection{On the boundedness of the sequences}

In this section we will discuss the boundedness of the sequence of primal-dual iterates $\left\lbrace \left( x_{k} , \lambda_{k} \right) \right\rbrace _{k \geq 0}$ and also of other related sequences which play a role in the convergence analysis.

To this end we define on $\sH \times \sG$ the inner product
\begin{equation*}
\left\langle u , u' \right\rangle _{\W}
= \left\langle \left( x , \lambda \right) , \left( x' , \lambda' \right) \right\rangle _{\W}
= \left\langle x , x' \right\rangle _{\Q} + \dfrac{1}{\rho} \left\langle \lambda , \lambda' \right\rangle  \quad \forall u := \left( x , \lambda \right) , u' := \left( x' , \lambda' \right) \in \sH \times \sG,
\end{equation*}
where ${\Q}$ is the operator defined in \eqref{define:Q} which we proved to be positive definite under assumption \eqref{assume:sigma}. The norm induced by this scalar product is
\begin{equation*}
\left\lVert u \right\rVert _{\W}
= \left\lVert \left( x , \lambda \right) \right\rVert _{\W}
= \sqrt{\left\lVert x \right\rVert _{\Q}^{2} + \dfrac{1}{\rho} \left\lVert \lambda \right\rVert ^{2}} \quad \forall u := \left( x , \lambda \right).
\end{equation*}

The condition on the sequence $\left\lbrace t_k \right\rbrace _{k \geq 1}$ which we will assume in the next proposition in order to guarantee boundedness for the sequences generated by Algorithm \ref{algo:fast} has been proposed in \cite{Bauschke-Bui-Wang}. Later we will see that it is satisfied by the three classical inertial parameters rules by Nesterov, Chambolle-Dossal and Attouch-Cabot.

\begin{prop}
	\label{prop:bnd}
	Let $\left\lbrace \left( x_{k} , \lambda_{k} \right) \right\rbrace _{k \geq 0}$ be the sequence generated by Algorithm \ref{algo:fast}.
	Suppose that
	\begin{equation}
	\label{assume:sup}
	\kappa := \inf\limits_{k \geq 1} \dfrac{t_{k}}{k} > 0 .
	\end{equation}
Then the sequences $\left\lbrace \left( x_{k} , \lambda_{k} \right) \right\rbrace _{k \geq 0}$, $\left\lbrace \left( z_{k}^{\gamma} , \nu_{k}^{\gamma} \right) \right\rbrace _{k \geq 1}$ and $\left\lbrace \left( t_{k+1} \left( x_{k+1} - x_{k} \right) , t_{k+1} \left( \lambda_{k+1} - \lambda_{k} \right) \right) \right\rbrace _{k \geq 0}$ are bounded.
If, in addition $\beta > 0$, then the sequence $\left\lbrace t_{k+1} \left( t_{k+1} - 1 \right) \left( Ax_{k+1} - Ax_{k} \right) \right\rbrace _{k \geq 0}$ is also bounded.
\end{prop}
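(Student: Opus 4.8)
The plan is to run everything off the energy inequality. First I would use Proposition~\ref{prop:sum}: the sequence $\{\E_k(x_*,\lambda_*)\}_{k\geq1}$ is nonincreasing for any fixed $(x_*,\lambda_*)\in\sol$, hence bounded above by $\E_1(x_*,\lambda_*)$. Each of the six summands defining $\E_k(x_*,\lambda_*)$ is nonnegative: the Lagrangian-gap term by \eqref{intro:saddle} (recall $x_*\in\Fea$) together with $t_k-1+\gamma\geq\gamma>0$; the $\Q$-weighted terms because $\Q\succcurlyeq0$, indeed $\Q$ is positive definite by \eqref{ineq:Q}; the remaining terms because $\gamma\in(0,1]$ and $t_k\geq1$. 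Consequently each summand is $\leq\E_1(x_*,\lambda_*)$, and since $\Q$ is positive definite $\|\cdot\|_\Q$ is a genuine norm, so $\{z_k^\gamma\}_{k\geq1}$ and $\{\nu_k^\gamma\}_{k\geq1}$ are bounded. Write $M:=\sup_{k\geq1}\|z_k^\gamma-\gamma x_*\|<+\infty$ and $N:=\sup_{k\geq1}\|\nu_k^\gamma-\gamma\lambda_*\|<+\infty$.

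The core step is passing from boundedness of $\{z_k^\gamma\}$ to that of $\{x_k\}$. By \eqref{algo:z-gamma} and Remark~\ref{rmk:z}, $z_k^\gamma=\gamma x_k+(t_k-1)(x_k-x_{k-1})=(t_k-1+\gamma)x_k-(t_k-1)x_{k-1}$ for every $k\geq1$, hence
\begin{equation*}
(t_k-1+\gamma)(x_k-x_*)=(z_k^\gamma-\gamma x_*)+(t_k-1)(x_{k-1}-x_*).
\end{equation*}
Taking norms and using $t_k-t_{k-1}\leq\varphi_m\leq m\leq\gamma$ (the first inequality is \eqref{bound:t-k}, the second is equivalent to the elementary $\sqrt{m^2+4}\leq m+2$), so that $t_k-1\leq t_{k-1}-1+\gamma$, I get $b_k\leq M+b_{k-1}$ for $k\geq2$, where $b_k:=(t_k-1+\gamma)\|x_k-x_*\|\geq0$. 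Since $t_1=1$ and $x_0=x_1$, iterating gives $b_k\leq\gamma\|x_1-x_*\|+(k-1)M$, whence $\|x_k-x_*\|\leq\bigl(\gamma\|x_1-x_*\|+(k-1)M\bigr)/(t_k-1+\gamma)$. Assumption \eqref{assume:sup} gives $t_k\geq\kappa k$, so the denominator grows linearly and the right-hand side stays bounded; thus $\{x_k\}_{k\geq0}$ is bounded. The identical argument with $\nu_k^\gamma$ in place of $z_k^\gamma$, via \eqref{algo:nu-gamma}, shows $\{\lambda_k\}_{k\geq0}$ is bounded. Now $\{z_k^\gamma\},\{\nu_k^\gamma\},\{x_k\},\{\lambda_k\}$ are all bounded, so the identities $(t_{k+1}-1)(x_{k+1}-x_k)=z_{k+1}^\gamma-\gamma x_{k+1}$ and $(t_{k+1}-1)(\lambda_{k+1}-\lambda_k)=\nu_{k+1}^\gamma-\gamma\lambda_{k+1}$ have bounded right-hand sides, and adding $x_{k+1}-x_k$ resp.\ $\lambda_{k+1}-\lambda_k$ proves that $\{t_{k+1}(x_{k+1}-x_k)\}_{k\geq0}$ and $\{t_{k+1}(\lambda_{k+1}-\lambda_k)\}_{k\geq0}$ are bounded.

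Finally, for $\beta>0$: by Remark~\ref{rmk:bnd}, $\{t_{k+1}(Ax_{k+1}-b)\}_{k\geq0}$ is bounded. Applying $A$ to \eqref{algo:z-gamma} and combining with \eqref{algo:lambda} and \eqref{eq:nu-gamma-lambda}, which yield $Az_{k+1}^\gamma-\gamma b=\frac{\gamma}{\rho}(\lambda_{k+1}-\mu_k)=\frac{\gamma}{\rho t_{k+1}}(\nu_{k+1}-\nu_k)$, I would derive
\begin{equation*}
t_{k+1}(t_{k+1}-1)(Ax_{k+1}-Ax_k)=\frac{\gamma}{\rho}(\nu_{k+1}-\nu_k)-\gamma\, t_{k+1}(Ax_{k+1}-b).
\end{equation*}
Writing $\nu_k=\nu_k^\gamma+(1-\gamma)\lambda_k$ (from \eqref{algo:nu-gamma} and \eqref{eq:nu-lambda}) shows $\nu_{k+1}-\nu_k=(\nu_{k+1}^\gamma-\nu_k^\gamma)+(1-\gamma)(\lambda_{k+1}-\lambda_k)$ is bounded, so the right-hand side above is bounded, finishing the proof. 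The hard part is the second paragraph: turning boundedness of the extrapolated sequence $z_k^\gamma$ into boundedness of $x_k$, which is precisely where both the step-size structure $t_k-t_{k-1}\leq\gamma$ (to make the recursion for $b_k$ telescope) and the lower bound $\inf_k t_k/k>0$ (to absorb the resulting $O(k)$ growth) are indispensable; the $\beta>0$ part requires the extra care of rewriting $t_{k+1}(t_{k+1}-1)(Ax_{k+1}-Ax_k)$ through $\nu_{k+1}-\nu_k$ rather than naively, so as to avoid an unbounded $t_{k+1}^2\|Ax_{k+1}-b\|$ term.
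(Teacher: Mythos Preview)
Your proof is correct and follows the same overall arc as the paper: energy monotonicity gives boundedness of $\{z_k^\gamma\}$ and $\{\nu_k^\gamma\}$, a recursive estimate upgrades this to boundedness of $\{x_k\}$ and $\{\lambda_k\}$, and the remaining claims follow from the defining identities.

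The one genuine difference is in the recursive step. The paper expands $\lVert z_k^\gamma-\gamma x_*\rVert_\Q^2$ and $\lVert\nu_k^\gamma-\gamma\lambda_*\rVert^2$ via the three-point identity \eqref{pre:vm:sum-n0}, which rewrites $\E_k(x_*,\lambda_*)$ so as to produce the squared-norm recursion $\tfrac{\gamma}{2}t_k\lVert u_k-u_*\rVert_\W^2\leq\tfrac{\gamma}{2}(t_k-1)\lVert u_{k-1}-u_*\rVert_\W^2+\E_1(x_*,\lambda_*)$; telescoping then only needs $t_k-1\leq t_{k-1}$, i.e.\ $t_k-t_{k-1}\leq 1$, which is \eqref{bound:t-k}. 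You instead take plain norms in the affine identity $(t_k-1+\gamma)(x_k-x_*)=(z_k^\gamma-\gamma x_*)+(t_k-1)(x_{k-1}-x_*)$ and use the triangle inequality, which yields the first-order recursion $b_k\leq M+b_{k-1}$ but requires the sharper increment bound $t_k-t_{k-1}\leq\gamma$; you obtain this from $\varphi_m\leq m\leq\gamma$. Your route is more elementary (no polarization identity, no $\W$-metric bookkeeping) at the cost of a slightly finer observation on $\varphi_m$; the paper's route treats primal and dual jointly and needs only the cruder $\varphi_m<1$. The $\beta>0$ part and the handling of $t_{k+1}(x_{k+1}-x_k)$ are essentially identical to the paper.
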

\begin{proof}
Let $\left( x_{*} , \lambda_{*} \right) \in \sol$  be fixed. For brevity we will write
\begin{equation*}
u_{*} := \left( x_{*} , \lambda_{*} \right) \in \sol \ \mbox{and} \ u_{k} := \left( x_{k} , \lambda_{k} \right) \in \sH \times \sG \quad \forall k \geq 0.
\end{equation*}		
By applying \eqref{pre:vm:sum-n0}, we have from \eqref{algo:z-gamma} that for every $k \geq 1$
	\begin{align*}
	\left\lVert z_{k}^{\gamma} - \gamma x_{*} \right\rVert _{\Q}^{2}
	  = & \ \left\lVert \left( t_{k} - 1 + \gamma \right) \left( x_{k} - x_{*} \right) - \left( t_{k} - 1 \right) \left( x_{k-1} - x_{*} \right) \right\rVert _{\Q}^{2} \nonumber \\
	 = & \ \gamma \left( t_{k} - 1 + \gamma \right) \left\lVert x_{k} - x_{*} \right\rVert _{\Q}^{2} - \gamma \left( t_{k} - 1 \right) \left\lVert x_{k-1} - x_{*} \right\rVert _{\Q}^{2} \nonumber \\
	&  + \left( t_{k} - 1 + \gamma \right) \left( t_{k} - 1 \right) \left\lVert x_{k} - x_{k-1} \right\rVert _{\Q}^{2} .
	\end{align*}
By applying \eqref{pre:sum-n0}, we have from \eqref{algo:nu-gamma} that for every $k \geq 1$
	\begin{align*}
	\left\lVert \nu_{k}^{\gamma} - \gamma \lambda_{*} \right\rVert ^{2}
	= & \ \left\lVert \left( t_{k} - 1 + \gamma \right) \left( \lambda_{k} - \lambda_{*} \right) - \left( t_{k} - 1 \right) \left( \lambda_{k-1} - \lambda_{*} \right) \right\rVert ^{2} \nonumber \\
	 = & \ \gamma \left( t_{k} - 1 + \gamma \right) \left\lVert \lambda_{k} - \lambda_{*} \right\rVert ^{2} - \gamma \left( t_{k} - 1 \right) \left\lVert \lambda_{k-1} - \lambda_{*} \right\rVert ^{2} \nonumber \\
	& + \left( t_{k} - 1 + \gamma \right) \left( t_{k} - 1 \right) \left\lVert \lambda_{k} - \lambda_{k-1} \right\rVert ^{2} .
	\end{align*}
This means the energy function at $(x_*,\lambda_*)$ can be written for every $k \geq 1$ as
	\begin{align}
	\E_{k} \left( x_* , \lambda_* \right)
	= & \ t_{k} \left( t_{k} - 1 + \gamma \right) \left( \Lb \left( x_{k} , \lambda_* \right) - \Lb \left( x_*, \lambda_{k} \right) \right) \nonumber \\
	& + \dfrac{\gamma}{2} t_{k} \left\lVert u_{k} - u_* \right\rVert _{\W}^{2} - \dfrac{\gamma}{2} \left( t_{k} - 1 \right) \left\lVert u_{k-1} - u_* \right\rVert _{\W}^{2} \nonumber \\
	& + \dfrac{1}{2} \left( t_{k} - 1 + \gamma \right) \left( t_{k} - 1 \right) \left\lVert u_{k} - u_{k-1} \right\rVert _{\W}^{2}
	+ \dfrac{1 - \gamma}{2 \rho} \left( t_{k} - 1 \right) \left\lVert \lambda_{k} - \lambda_{k-1} \right\rVert ^{2} . \label{bnd:E}
	\end{align}
	According to Proposition \ref{prop:sum}, the sequence $\left\lbrace \E_{k} \left( x_{*} , \lambda_{*} \right) \right\rbrace _{k \geq 1}$ is nonincreasing, therefore for every $k \geq 1$
	\begin{align*}
	& \dfrac{\gamma}{2} t_{k} \left\lVert u_{k} - u_{*} \right\rVert _{\W}^{2} - \dfrac{\gamma}{2} \left( t_{k} - 1 \right) \left\lVert u_{k-1} - u_* \right\rVert _{\W}^{2}
	+ \dfrac{1}{2} \left( t_{k} - 1 + \gamma \right) \left( t_{k} - 1 \right) \left\lVert u_{k} - u_{k-1} \right\rVert _{\W}^{2} \nonumber \\
	\leq  & \ \dfrac{1}{2} \left\lVert z_{k}^{\gamma} - \gamma x_{*} \right\rVert _{\Q}^{2} + \dfrac{1}{2 \rho} \left\lVert \nu_{k}^{\gamma} - \gamma \lambda_{*} \right\rVert ^{2}
	\leq \E_{k} \left( x_{*} , \lambda_{*} \right) \leq \cdots \leq \E_{1} \left( x_{*} , \lambda_{*} \right) < + \infty .
	\end{align*}
	From here we conclude that the sequence $\left\lbrace \left( z_{k}^{\gamma} , \nu_{k}^{\gamma} \right) \right\rbrace _{k \geq 1}$ is bounded. In addition, for every $k \geq 1$ it holds
	\begin{align*}
	\dfrac{\gamma}{2} t_{k} \left\lVert u_{k} - u_{*} \right\rVert _{\W}^{2}
	\leq \dfrac{\gamma}{2} \left( t_{k} - 1 \right) \left\lVert u_{k-1} - u_{*} \right\rVert _{\W}^{2} + \E_{1} \left( x_{*} , \lambda_{*} \right)
	\leq \dfrac{\gamma}{2} t_{k-1} \left\lVert u_{k-1} - u_{*} \right\rVert _{\W}^{2} + \E_{1} \left( x_{*} , \lambda_{*} \right) ,
	\end{align*}
	where the last inequality is due to \eqref{bound:t-k}, with the convention $t_{0} := 0$. After telescoping, we get
	\begin{equation*}
	\dfrac{\gamma}{2} t_{k} \left\lVert u_{k} - u_{*} \right\rVert _{\W}^{2}
	\leq k \E_{1} \left( x_{*} , \lambda_{*} \right) \quad \forall k \geq 1.
	\end{equation*}
	Then thanks to \eqref{assume:sup} we obtain
	\begin{equation*}
	\left\lVert u_{k} - u_{*} \right\rVert _{\W}^{2} \leq \dfrac{2k}{\gamma t_{k}} \E_{1} \left( x_{*} , \lambda_{*} \right) \leq \dfrac{2}{\gamma \kappa} \E_{1} \left( x_{*} , \lambda_{*} \right) < + \infty ,
	\end{equation*}
	which means that $\left\lbrace u_{k} := \left( x_{k} , \lambda_{k} \right) \right\rbrace _{k \geq 0}$ is bounded. That $\left\lbrace \left( t_{k+1} \left( x_{k+1} - x_{k} \right) , t_{k+1} \left( \lambda_{k+1} - \lambda_{k} \right) \right) \right\rbrace _{k \geq 0}$ is bounded follows from the fact that for all $k \geq 1$
	\begin{align*}
	t_{k} \left( x_{k} - x_{k-1} \right) 				& = z_{k}^{\gamma} - (\gamma -1) x_{k} - x_{k-1} , \nonumber \\
	t_{k} \left( \lambda_{k} - \lambda_{k-1} \right) 	& = \nu_{k}^{\gamma} - (\gamma -1) \lambda_{k} - \lambda_{k-1} 
	\end{align*}
	
	Finally, recall that from \eqref{eq:nu-gamma-lambda}, \eqref{algo:lambda} and \eqref{algo:z-gamma}, we have for every $k \geq 1$
	\begin{align*}
	\nu_{k+1}^{\gamma} - \nu_{k}^{\gamma} + \left( 1 - \gamma \right) \left( \lambda_{k+1} - \lambda_{k} \right)
	& = t_{k+1} \left( \lambda_{k+1} - \mu_{k} \right) = \dfrac{\rho}{\gamma} t_{k+1}  \left( Az_{k+1}^{\gamma} - \gamma b \right) \nonumber \\
	& = \dfrac{\rho}{\gamma} \left( \gamma t_{k+1} \left( Ax_{k+1} - b \right) + t_{k+1} \left( t_{k+1} - 1 \right) \left( Ax_{k+1} - Ax_{k} \right) \right) .
	\end{align*}
	The last statement of the proposition follows from here and \eqref{bnd:pre}.
\end{proof}

In the following, we will see that the two most prominent choices for the sequence $\{t_k\}_{k \geq 1}$ from the literature, namely, the ones following the rules by Nesterov and by Chambolle-Dossal satisfy not only \eqref{assume:t-k+}, but also \eqref{assume:sup}. 

\begin{ex} {\bf (Nesterov rule)}
	\label{ex:Nes}
The classical construction proposed Nesterov in \cite{Nesterov:83} for $\left\lbrace t_{k} \right\rbrace _{k \geq 1}$ satisfies the following rule
	\begin{equation}
	\label{t-k:Nes}
	t_{1} := 1 \qquad \textrm{ and } \qquad t_{k+1} := \dfrac{1 + \sqrt{1 + 4t_{k}^{2}}}{2} \quad \forall k \geq 1 .
	\end{equation}
The sequence $\left\lbrace t_{k} \right\rbrace _{k \geq 1}$ is strictly increasing and verifies relation \eqref{assume:t-k+} for $m:=1$ with equality. In addition (see, for instance, \cite[Lemma 4.3]{FISTA}), it holds $t_{k} \geq \dfrac{k + 1}{2}$ for every $k \geq 1$, which means that \eqref{assume:sup} is satisfied for $\kappa \geq \dfrac{1}{2}$.
\end{ex}
\begin{ex} {\bf (Chambolle-Dossal rule)}
	\label{ex:C-D}
The construction proposed by Chambolle and Dossal in \cite{Chambolle-Dossal} (see also \cite{Attouch-Peypouquet})
for $\left\lbrace t_{k} \right\rbrace _{k \geq 1}$ satisfies for $\alpha \geq 3$ the following rule
 \begin{equation}
	\label{t-k:C-D}
	t_{k} := 1 + \dfrac{k-1}{\alpha - 1} = \dfrac{k + \alpha - 2}{\alpha - 1} \quad \forall k \geq 1.
	\end{equation}
First we show that this sequence fulfills \eqref{assume:t-k+} with $m := \dfrac{2}{\alpha - 1}  \leq 1$. Indeed, for every $k \geq 1$ we have
	\begin{align}
	t_{k+1}^{2} - m t_{k+1} - t_{k}^{2}
	& = \left( t_{k+1} - t_{k} \right) \left( t_{k+1} + t_{k} \right) - m t_{k+1} 
	= \dfrac{1}{\alpha - 1} \left( 2 + \dfrac{2k - 1}{\alpha - 1} \right) -  \dfrac{2}{\alpha - 1} \dfrac{k + \alpha - 1}{\alpha - 1} \nonumber \\
	& =- \dfrac{1}{\left( \alpha - 1 \right) ^{2}}  < 0. \label{C-D:dts}
	\end{align}
	Furthermore, one can see that for every $k \geq 1$ it holds
	\begin{equation*}
	\dfrac{t_{k}}{k} = \dfrac{1}{\alpha - 1} + \dfrac{\alpha - 2}{k \left( \alpha - 1 \right)} ,
	\end{equation*}
	which proves that \eqref{assume:sup}  is verified for $\kappa =\dfrac{1}{\alpha - 1} $. 

Finally, we observe that, by taking into consideration the choice of $\gamma$ in \eqref{ds:gamma} in the context of
the dynamical system \eqref{ds:PD-AVD} and assumption \eqref{assume:sigma} in Algorithm \ref{algo:fast}, it holds
\begin{equation}
	\label{C-D:m-gamma}
	m = \dfrac{2}{\alpha - 1} \leq \gamma = \dfrac{1}{\theta \left( \alpha - 1 \right)} \Leftrightarrow \theta \leq \dfrac{1}{2} .
	\end{equation}
This connects the choice of the parameter $m$ in Algorithm \ref{algo:fast} with the one of the parameter $\theta$ in \eqref{ds:PD-AVD}. 
\end{ex}

\subsection{Fast convergence rates for the primal-dual gap, the feasibility measure and the objective function value}
\label{subsec:fast-rate}

We have seen in Remark \ref{rmk:bnd} that, for the general choice of the sequence $\{t_k\}_{k \geq  1}$ in  \eqref{assume:t-k+}, the convergence rate of the primal-dual gap is of order $\bO \left( 1 / t_{k}^{2} \right)$ as $k \rightarrow +\infty$. In addition, if $\beta >0$, then the convergence rate of the feasibility measure is of order $\bO \left( 1 / t_{k} \right)$ as $k \rightarrow +\infty$. In this section we will prove that convergence rates of the feasibility measure and of the objective function value are $\bO \left( 1 / t_{k}^{2} \right)$ as $k \rightarrow +\infty$ when the sequence $\{t_k\}_{k \geq  1}$ is chosen by following the rules by Nesterov, Chambolle-Dossal and also Attouch-Cabot. 

In view of \eqref{assume:sup}, this will lead for the primal-dual sequence $\{(x_k,\lambda_k)\}_{k \geq 0}$ generated by Algorithm \ref{algo:fast} and a given primal-dual solution $(x_{*},\lambda_*)$ to the following fast convergence rates
$$\Lag \left( x_{k} , \lambda_{*} \right) - \Lag \left( x_{*} , \lambda_{k} \right) = \bO \left( \dfrac{1}{k^{2}} \right)  \ \mbox{as} \ k  \rightarrow +\infty,$$
$$\left\lVert A x_{k} - b \right\rVert = \bO \left( \dfrac{1}{k^{2}} \right) \mbox{ and } \left\lvert f \left(x_{k} \right) - f_{*} \right\rvert = \bO \left( \dfrac{1}{k^{2}} \right)  \ \mbox{as} \ k  \rightarrow +\infty.$$

We start with the following lemma which holds in the very general setting of Algorithm \ref{algo:fast}.

\begin{lem}
	\label{lem:sup}
	Let $\left\lbrace \left( x_{k} , \lambda_{k} \right) \right\rbrace _{k \geq 0}$ be the sequence generated by Algorithm \ref{algo:fast} and $\left( x_{*} , \lambda_{*} \right) \in \sol$. Then the quantity
	\begin{equation*}
	\Csup := \sup\limits_{\mu \in \sB \left( \lambda_{*} ; 1 \right)}  \E_{1} \left( x_{*} , \mu \right) < + \infty. 
	\end{equation*}
\end{lem}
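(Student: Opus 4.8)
The strategy is to write out $\E_1(x_*,\mu)$ explicitly using the definition of the energy function at $k=1$, and then observe that, thanks to the initialization $x_0=x_1$ and $\lambda_0=\lambda_1$ together with $t_1=1$, almost all terms either vanish or become elementary expressions in $x_1,\lambda_1$ and $\mu$. Recalling the definition, for $(x,\lambda)\in\Fea\times\sG$ we have
\[
\E_1(x,\lambda)=t_1(t_1-1+\gamma)\bigl(\Lb(x_1,\lambda)-\Lb(x,\lambda_1)\bigr)+\tfrac12\lVert z_1^\gamma-\gamma x\rVert_\Q^2+\tfrac1{2\rho}\lVert\nu_1^\gamma-\gamma\lambda\rVert^2+\tfrac\gamma2(1-\gamma)\lVert x_1-x\rVert_\Q^2+\tfrac\gamma{2\rho}(1-\gamma)\lVert\lambda_1-\lambda\rVert^2+\tfrac{1-\gamma}{2\rho}(t_1-1)\lVert\lambda_1-\lambda_0\rVert^2.
\]
Since $t_1=1$, the coefficient $t_1(t_1-1+\gamma)=\gamma$ and the last term vanishes. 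From $x_0=x_1$, $\lambda_0=\lambda_1$ and \eqref{algo:z-gamma}, \eqref{algo:nu-gamma} we get $z_1^\gamma=\gamma x_1$ and $\nu_1^\gamma=\gamma\lambda_1$, so $\lVert z_1^\gamma-\gamma x\rVert_\Q^2=\gamma^2\lVert x_1-x\rVert_\Q^2$ and $\lVert\nu_1^\gamma-\gamma\lambda\rVert^2=\gamma^2\lVert\lambda_1-\lambda\rVert^2$. Hence
\[
\E_1(x,\lambda)=\gamma\bigl(\Lb(x_1,\lambda)-\Lb(x,\lambda_1)\bigr)+\tfrac{\gamma}{2}\lVert x_1-x\rVert_\Q^2+\tfrac{\gamma}{2\rho}\lVert\lambda_1-\lambda\rVert^2.
\]

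Now I specialize to $x=x_*$ and $\lambda=\mu$ with $\mu\in\sB(\lambda_*;1)$. The only term that is not obviously continuous and bounded on the compact ball is $\Lb(x_1,\mu)-\Lb(x_*,\lambda_1)$; here I use that $x_*\in\Fea$, so by \eqref{intro:Fea:eq} we have $\Lb(x_*,\lambda_1)=f(x_*)=f_*$, a fixed finite number independent of $\mu$. For the other piece, $\Lb(x_1,\mu)=f(x_1)+\langle\mu,Ax_1-b\rangle+\tfrac{\beta}{2}\lVert Ax_1-b\rVert^2$ is an affine, hence continuous, function of $\mu$; its supremum over the closed ball $\sB(\lambda_*;1)$ is attained and finite, explicitly bounded by $f(x_1)+\tfrac\beta2\lVert Ax_1-b\rVert^2+(\lVert\lambda_*\rVert+1)\lVert Ax_1-b\rVert$. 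Likewise $\lVert\lambda_1-\mu\rVert\le\lVert\lambda_1-\lambda_*\rVert+1$, so the quadratic term is bounded by $\tfrac{\gamma}{2\rho}(\lVert\lambda_1-\lambda_*\rVert+1)^2$, and the term $\tfrac\gamma2\lVert x_1-x_*\rVert_\Q^2$ does not depend on $\mu$ at all. Collecting these bounds gives an explicit finite upper bound for $\sup_{\mu\in\sB(\lambda_*;1)}\E_1(x_*,\mu)$, which is the claim.

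There is essentially no obstacle here: the lemma is a soft continuity-plus-compactness statement, and the only mild care needed is to invoke $x_*\in\Fea$ in order to make $\Lb(x_*,\lambda_1)$ a constant. (Alternatively, one can simply note that $\E_1(x_*,\cdot)$, as written above, is a continuous real-valued function of its dual argument — a sum of an affine function and a quadratic in $\lambda$ — and a continuous function on the compact ball $\sB(\lambda_*;1)$ attains its supremum, so the supremum is finite; this shortens the argument but the explicit bound makes the dependence on $\E_1$-type quantities transparent for later use.)
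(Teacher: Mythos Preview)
Your proof is correct and follows essentially the same approach as the paper: simplify $\E_1(x_*,\mu)$ using $t_1=1$, $z_1^\gamma=\gamma x_1$, $\nu_1^\gamma=\gamma\lambda_1$, invoke $x_*\in\Fea$ so that $\Lb(x_*,\lambda_1)=f(x_*)$ is constant, and then bound the remaining affine and quadratic terms in $\mu$ over the closed ball. Your algebraic combination $\tfrac{\gamma^2}{2}+\tfrac{\gamma(1-\gamma)}{2}=\tfrac{\gamma}{2}$ is in fact slightly cleaner than the paper's version, which bounds the two pieces separately via $\lVert a+b\rVert^2\le 2\lVert a\rVert^2+2\lVert b\rVert^2$, but the argument is the same in substance.
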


\begin{proof}
	Let $\left( x_{*} , \lambda_{*} \right) \in \sol$ and $\mu \in \sB \left( \lambda_{*} ; 1 \right)$.
	The Cauchy-Schwarz inequality gives
	\begin{align*}
	\Lb \left( x_{1} , \mu \right) - \Lb \left( x_{*} , \lambda_{1} \right)
	= \ 	& f \left( x_{1} \right) - f \left( x_{*} \right) + \left\langle \mu , Ax_{1} - b \right\rangle + \dfrac{\beta}{2} \left\lVert Ax_{1} - b \right\rVert ^{2} \nonumber \\
	\leq \ 	& f \left( x_{1} \right) - f \left( x_{*} \right) + \left\lVert \mu \right\rVert \left\lVert Ax_{1} - b \right\rVert + \dfrac{\beta}{2} \left\lVert Ax_{1} - b \right\rVert ^{2} \nonumber \\
	\leq \ 	& \CLag := f \left( x_{1} \right) - f \left( x_{*} \right) + \left( 1 + \left\lVert \lambda_{*} \right\rVert \right) \left\lVert Ax_{1} - b \right\rVert + \dfrac{\beta}{2} \left\lVert Ax_{1} - b \right\rVert ^{2} .
	\end{align*}
On the other hand, as $\nu_{1}^{\gamma} = \gamma \lambda_{1}$ and $\mu \in \sB \left( \lambda_{*} ; 1 \right)$, it holds
	\begin{align*}
	& \dfrac{1}{2 \rho} \left\lVert \nu_{1}^{\gamma} - \gamma \mu \right\rVert ^{2}
	+ \dfrac{\gamma}{2 \rho} \left( 1 - \gamma \right) \left\lVert \lambda_{1} - \mu \right\rVert ^{2} \nonumber \\
	\leq \ 	& \dfrac{1}{\rho} \left( \left\lVert \nu_{1}^{\gamma} - \gamma \lambda_{*} \right\rVert ^{2} + \gamma^{2} \left\lVert \mu - \lambda_{*} \right\rVert ^{2} \right)
	+ \dfrac{\gamma}{\rho} \left( 1 - \gamma \right) \left( \left\lVert \lambda_{1} - \lambda_{*} \right\rVert ^{2} + \left\lVert \mu - \lambda_{*} \right\rVert ^{2} \right) \nonumber \\
	\leq \ 	& \Cite := \dfrac{\gamma^{2}}{\rho} \left\lVert \lambda_{1} - \lambda_{*} \right\rVert ^{2} + \dfrac{\gamma}{\rho} \left( 1 - \gamma \right) \left\lVert \lambda_{1} - \lambda_{*} \right\rVert ^{2} + \dfrac{\gamma}{\rho} = \dfrac{\gamma}{\rho} \left( \left\lVert \lambda_{1} - \lambda_{*} \right\rVert ^{2} + 1 \right) .
	\end{align*}
Combining these estimates, as $z_{1}^{\gamma} = \gamma x_{1}$, we have
	\begin{align*}
	\E_{1} \left( x_{*} , \mu \right)
	= & \ t_{1} \left( t_{1} - 1 + \gamma \right) \left( \Lb \left( x_{1} , \mu \right) - \Lb \left( x_{*} , \lambda_{1} \right) \right)
	+ \dfrac{1}{2} \left\lVert z_{1}^{\gamma} - \gamma x_{*} \right\rVert _{\Q}^{2}
	+ \dfrac{1}{2 \rho} \left\lVert \nu_{1}^{\gamma} - \gamma \mu \right\rVert ^{2} \nonumber \\
	& + \dfrac{1}{2} \gamma \left( 1 - \gamma \right) \left\lVert x_{1} - x_{*} \right\rVert _{\Q}^{2}
	+ \dfrac{\gamma}{2 \rho} \left( 1 - \gamma \right) \left\lVert \lambda_{1} - \mu \right\rVert ^{2}
	+ \dfrac{1 - \gamma}{2 \rho} \left( t_{1} - 1 \right) \left\lVert \lambda_{1} - \lambda_{0} \right\rVert ^{2} \nonumber \\
	\leq & \ \gamma \CLag + \Cite + \dfrac{\gamma}{2} \left\lVert x_{1} - x_{*} \right\rVert _{\Q}^{2} < + \infty ,
	\end{align*}
which proves the statement.
\end{proof}

\subsubsection{The Nesterov (\cite{Nesterov:83}) rule}

We have seen that by choosing $\left\lbrace t_{k} \right\rbrace _{k \geq 1}$ as in \eqref{t-k:Nes},  \eqref{assume:t-k+} is fulfilled as equality for $m = 1$, which also yields  $\gamma = 1$ due to \eqref{assume:sigma}. Consequently, from Proposition \ref{prop:dec} it follows that for every $\left( x , \lambda \right) \in \Fea \times \sG$ and every $k \geq 1$ it holds
\begin{equation}
\label{Nes:dec}
\E_{k+1} \left( x , \lambda \right) \leq \E_{k} \left( x , \lambda \right) ,
\end{equation}
which means that the sequence $\left\lbrace \E_{k} \left( x , \lambda \right) \right\rbrace _{k \geq 1}$ is nonincreasing. This statement is stronger than the one in Proposition \ref{prop:sum}, where we have proved that the sequence of function values of the energy function taken at a primal-dual optimal solution is nonincreasing, and will play an important role in the following.

\begin{thm}
	\label{thm:rate:Nes}
	Let $\left\lbrace \left( x_{k} , \lambda_{k} \right) \right\rbrace _{k \geq 0}$ be the sequence generated by Algorithm \ref{algo:fast}, with the sequence $\left\lbrace t_{k} \right\rbrace _{k \geq 1}$ chosen to satisfy Nesterov rule \eqref{t-k:Nes}, and $\left( x_{*} , \lambda_{*} \right) \in \sol$. Then for every $k \geq 1$ it holds
	\begin{equation}
	\label{Nes:inq:Lag}
	0 \leq \Lag \left( x_{k} , \lambda_{*} \right) - \Lag \left( x_{*} , \lambda_{k} \right) + \left\lVert Ax_{k} - b \right\rVert \leq \dfrac{\Csup}{t_{k}^{2}}
	\end{equation}
and
	\begin{equation}
	\label{Nes:inq:fun}
	- \dfrac{\left\lVert \lambda_{*} \right\rVert \Csup}{t_{k}^{2}}
	\leq f \left( x_{k} \right) - f \left( x_{*} \right)
	\leq \dfrac{\left( 1 + \left\lVert \lambda_{*} \right\rVert \right) \Csup}{t_{k}^{2}} .
	\end{equation}
\end{thm}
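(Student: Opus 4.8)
The plan is to leverage the stronger monotonicity property \eqref{Nes:dec}, which in the Nesterov regime asserts that $\{\E_k(x,\lambda)\}_{k\ge1}$ is nonincreasing for \emph{every} $(x,\lambda)\in\Fea\times\sG$, not only at a primal-dual solution. First I would record that \eqref{t-k:Nes} forces $m=1$ and hence, via \eqref{assume:sigma}, $\gamma=1$; substituting $\gamma=1$ into the definition of $\E_k$ kills all the terms carrying the factor $1-\gamma$ and collapses the leading coefficient $t_k(t_k-1+\gamma)$ to $t_k^2$, so that for every $(x,\lambda)\in\Fea\times\sG$ and every $k\ge1$
\begin{equation*}
\E_k(x,\lambda)=t_k^2\bigl(\Lb(x_k,\lambda)-\Lb(x,\lambda_k)\bigr)+\tfrac12\|z_k-x\|_\Q^2+\tfrac{1}{2\rho}\|\nu_k-\lambda\|^2\ge t_k^2\bigl(\Lb(x_k,\lambda)-\Lb(x,\lambda_k)\bigr),
\end{equation*}
the inequality holding because $\Q\succcurlyeq0$.

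Next I would fix $(x_*,\lambda_*)\in\sol$ and an arbitrary $\mu\in\sB(\lambda_*;1)$, apply \eqref{Nes:dec} along $(x,\lambda)=(x_*,\mu)$ (legitimate since $x_*\in\Fea$), and chain the monotonicity down to $k=1$ to get $t_k^2\bigl(\Lb(x_k,\mu)-\Lb(x_*,\lambda_k)\bigr)\le\E_k(x_*,\mu)\le\E_1(x_*,\mu)\le\Csup$, with $\Csup<+\infty$ by Lemma \ref{lem:sup}. Using \eqref{intro:Fea:eq} to write $\Lb(x_*,\lambda_k)=f(x_*)$ and expanding $\Lb(x_k,\mu)=f(x_k)+\langle\mu,Ax_k-b\rangle+\tfrac\beta2\|Ax_k-b\|^2$, this becomes $f(x_k)-f(x_*)+\langle\mu,Ax_k-b\rangle+\tfrac\beta2\|Ax_k-b\|^2\le\Csup/t_k^2$ for all $\mu\in\sB(\lambda_*;1)$. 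Taking the supremum over the ball — concretely, plugging in $\mu=\lambda_*+(Ax_k-b)/\|Ax_k-b\|$ when $Ax_k\ne b$ and observing the estimate is trivial otherwise — replaces $\langle\mu,Ax_k-b\rangle$ by $\langle\lambda_*,Ax_k-b\rangle+\|Ax_k-b\|$. Since $x_*\in\Fea$ gives $\Lag(x_*,\lambda_k)=f(x_*)$ and $\Lag(x_k,\lambda_*)=f(x_k)+\langle\lambda_*,Ax_k-b\rangle$, and since $\tfrac\beta2\|Ax_k-b\|^2\ge0$, this yields the upper bound in \eqref{Nes:inq:Lag}; the lower bound is immediate from the saddle-point inequality \eqref{intro:saddle}, which forces $\Lag(x_k,\lambda_*)\ge\Lag(x_*,\lambda_*)\ge\Lag(x_*,\lambda_k)$, together with $\|Ax_k-b\|\ge0$.

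For \eqref{Nes:inq:fun} I would decouple the two nonnegative summands in \eqref{Nes:inq:Lag}: each is separately bounded by $\Csup/t_k^2$, so in particular $\|Ax_k-b\|\le\Csup/t_k^2$. Writing $f(x_k)-f(x_*)=\bigl(\Lag(x_k,\lambda_*)-\Lag(x_*,\lambda_k)\bigr)-\langle\lambda_*,Ax_k-b\rangle$ and bounding $|\langle\lambda_*,Ax_k-b\rangle|\le\|\lambda_*\|\,\|Ax_k-b\|\le\|\lambda_*\|\Csup/t_k^2$ then produces the upper and the lower bound simultaneously. I do not expect a genuine obstacle; the only steps requiring a little care are the passage to the supremum over $\sB(\lambda_*;1)$ with the degenerate case $Ax_k=b$ handled separately, and the observation that the $\tfrac\beta2\|Ax_k-b\|^2$ terms, although present in $\Lb$, may simply be discarded against $0$ to recover the statements in terms of the non-augmented Lagrangian $\Lag$.
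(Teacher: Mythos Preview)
Your proposal is correct and follows essentially the same approach as the paper: exploit that under the Nesterov rule $m=\gamma=1$ so that $\{\E_k(x,\lambda)\}_{k\ge1}$ is nonincreasing for every $(x,\lambda)\in\Fea\times\sG$, apply this with $\lambda=\lambda_*+(Ax_k-b)/\lVert Ax_k-b\rVert$ and invoke Lemma~\ref{lem:sup}, then deduce \eqref{Nes:inq:fun} from \eqref{Nes:inq:Lag} via Cauchy--Schwarz. The only cosmetic difference is that for the lower bound on $f(x_k)-f(x_*)$ the paper writes out the convexity inequality $f(x_k)-f(x_*)\ge\langle\nabla f(x_*),x_k-x_*\rangle=-\langle\lambda_*,Ax_k-b\rangle$ explicitly, whereas you obtain the same inequality from $\Lag(x_k,\lambda_*)-\Lag(x_*,\lambda_k)\ge0$; these are equivalent.
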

\begin{proof}
As mentioned earlier in \eqref{Nes:dec}, for every $\left( x , \lambda \right) \in \Fea \times \sG$ and every $k \geq 1$ we have (take into account that $\gamma=1$)
	\begin{equation}
	\label{Nes:pre}
	t_{k}^{2} \left( f \left( x_{k} \right) - f \left( x \right) + \left\langle \lambda , Ax_{k} - b \right\rangle \right) \leq \E_{k} \left( x , \lambda \right) \leq \cdots \leq \E_{1} \left( x , \lambda \right) .
	\end{equation}
We fix $n \geq 1$ and define
	\begin{equation*}
	r_{n} := \begin{dcases}
	\lambda_{*}, 		& \textrm{ if } Ax_{n} - b = 0 \\
	\lambda_{*} + \dfrac{Ax_{n} - b}{\left\lVert Ax_{n} - b \right\rVert},		& \textrm{ if } Ax_{n} - b \neq 0
	\end{dcases} .
	\end{equation*}	
Then $x_{*} \in \Fea$ and $r_{n} \in \sB \left( \lambda_{*} ; 1 \right)$. Hence, $\left( x_{*} , r_{n} \right) \in \Fea \times \sB \left( \lambda_{*} ; 1 \right)$, therefore, according to \eqref{Nes:pre} and Lemma \ref{lem:sup},
	\begin{equation}
	\label{Nes:choose}
	t_{n}^{2} \left( f \left( x_{n} \right) - f \left( x_{*} \right) + \left\langle r_{n} , Ax_{n} - b \right\rangle \right) \leq \E_{1} \left( x_{*} , r_{n} \right) \leq \sup\limits_{\mu \in \sB \left( \lambda_{*} ; 1 \right)}  \E_{1} \left( x_{*} , \mu \right) = \Csup.
	\end{equation}
If $Ax_{n} - b \neq 0$, then
	\begin{align*}
	f \left( x_{n} \right) - f \left( x_{*} \right) + \left\langle r_{n} , Ax_{n} - b \right\rangle
	& = f \left( x_{n} \right) - f \left( x_{*} \right) + \left\langle \lambda_{*} , Ax_{n} - b \right\rangle + \left\lVert Ax_{n} - b \right\rVert \nonumber \\
	& = \Lag \left( x_{n} , \lambda_{*} \right) - \Lag \left( x_{*} , \lambda_{n} \right) + \left\lVert Ax_{n} - b \right\rVert .
	\end{align*}
On the other hand, if $Ax_{n} - b = 0$, we have
	\begin{align*}
	f \left( x_{n} \right) - f \left( x_{*} \right) + \left\langle r_{n} , Ax_{n} - b \right\rangle
	& = f \left( x_{n} \right) - f \left( x_{*} \right) + \left\langle \lambda_{*} , Ax_{n} - b \right\rangle
	= \Lag \left( x_{n} , \lambda_{*} \right) - \Lag \left( x_{*} , \lambda_{n} \right) \nonumber \\
	& = \Lag \left( x_{n} , \lambda_{*} \right) - \Lag \left( x_{*} , \lambda_{n} \right) + \left\lVert Ax_{n} - b \right\rVert,
	\end{align*}
thus, in both scenarios, \eqref{Nes:choose} becomes
	\begin{equation*}
	0 \leq t_{n}^{2} \left( \Lag \left( x_{n} , \lambda_{*} \right) - \Lag \left( x_{*} , \lambda_{n} \right) + \left\lVert Ax_{n} - b \right\rVert \right) \leq \Csup.
	\end{equation*}
	Since $n \geq 1$ has been arbitrarily chosen, we obtain \eqref{Nes:inq:Lag}.
	
As $\Lag \left( x_{k} , \lambda_{*} \right) - \Lag \left( x_{*} , \lambda_{k} \right) \geq 0$, a direct consequent of \eqref{Nes:inq:Lag} is that for every $k \geq 1$
	\begin{equation*}
	0 \leq \left\lVert Ax_{k} - b \right\rVert \leq \dfrac{\Csup}{t_{k}^{2}} .
	\end{equation*}
	From \eqref{Nes:inq:Lag} and the Cauchy-Schwarz inequality, we deduce from here that for every $k \geq 1$
	\begin{align}
	f \left( x_{k} \right) - f \left( x_{*} \right)
	\leq \dfrac{\Csup}{t_{k}^{2}} - \left\langle \lambda_{*} , Ax_{k} - b \right\rangle
	& \leq \dfrac{\Csup}{t_{k}^{2}} + \left\lVert \lambda_{*} \right\rVert \left\lVert Ax_{k} - b \right\rVert \nonumber \\
	& \leq \dfrac{\left( 1 + \left\lVert \lambda_{*} \right\rVert \right) \Csup}{t_{k}^{2}} . \label{Nes:C-S}
	\end{align}
	On the other hand, the convexity of $f$ together with \eqref{intro:opt-Lag} guarantee that for every $k \geq 1$
	\begin{align}
	f \left( x_{k} \right) - f \left( x_{*} \right)
	& \geq \left\langle \nabla f \left( x_{*} \right) , x_{k} - x_{*} \right\rangle
	= - \left\langle A^{*} \lambda_{*} , x_{k} - x_{*} \right\rangle \nonumber \\
	& = - \left\langle \lambda_{*} , Ax_{k} - b \right\rangle
	\geq - \left\lVert \lambda_{*} \right\rVert \left\lVert Ax_{k} - b \right\rVert
	\geq - \dfrac{\left\lVert \lambda_{*} \right\rVert \Csup}{t_{k}^{2}} . \label{Nes:conv}
	\end{align}
	By combining \eqref{Nes:C-S} and \eqref{Nes:conv}, we obtain \eqref{Nes:inq:fun}.
\end{proof}

\subsubsection{The Chambolle-Dossal (\cite{Chambolle-Dossal}) rule}

In this section we prove fast convergence rates for the primal-dual gap, the feasibility measure and the objective function value for the sequence of inertial parameters $\left\lbrace t_{k} \right\rbrace _{k \geq 1}$ following for $\alpha \geq 3$ the Chambole-Dossal rule \eqref{t-k:C-D}. We have seen in Example \ref{ex:C-D} that in this case $\left\lbrace t_{k} \right\rbrace _{k \geq 1}$ fulfills \eqref{assume:t-k+} for $m:=\frac{2}{\alpha-1}$ and \eqref{assume:sup} for $\kappa:=\frac{1}{\alpha-1}$.

For the beginning we observe that for $\frac{2}{\alpha-1} = m \leq \gamma \leq 1$ and every $k \geq 1$ it holds (see \eqref{C-D:dts})
\begin{align}
t_{k} \left( t_{k} - 1 + \gamma \right) - t_{k+1} \left( t_{k+1} - 1 \right)
& = t_{k}^{2} - t_{k+1}^{2} + \left( 1 - \gamma \right) \left( t_{k+1} - t_{k} \right) + \gamma t_{k+1} \nonumber \\
& = - \dfrac{2}{\alpha - 1} t_{k+1} + \dfrac{1}{\left( \alpha - 1 \right) ^{2}} + \dfrac{1 - \gamma}{\alpha - 1} + \gamma t_{k+1} \nonumber\\
& = \dfrac{1}{\alpha-1}(\gamma(\alpha-1)-2)t_{k+1} +  \dfrac{1}{(\alpha-1)^2}(1 - \gamma(\alpha-1)) + \dfrac{1}{\alpha - 1} \nonumber \\
& = \dfrac{1}{\left( \alpha - 1 \right) ^{2}} \Big( \left(\gamma(\alpha-1) - 2 \right) k + \left(\gamma(\alpha-1)- 1 \right) \left( \alpha - 2 \right) \Big) \nonumber \\
& = \dfrac{1}{\left( \alpha - 1 \right) ^{2}} \Big( \left( \gamma(\alpha-1)- 2 \right) \left( k + \alpha - 2 \right) + \alpha - 2 \Big). \label{C-D:dty}
\end{align}
Next we are going to consider two separate cases depending on the relation between $m:=\frac{2}{\alpha-1}$ and $\gamma$.
First we will assume that they are equal, which will then also cover the case $\alpha =3$.

\begin{thm}
	Let $\left\lbrace \left( x_{k} , \lambda_{k} \right) \right\rbrace _{k \geq 0}$ be the sequence generated by Algorithm \ref{algo:fast} with the sequence $\left\lbrace t_{k} \right\rbrace _{k \geq 1}$ chosen to satisfy Chambolle-Dossal rule \eqref{t-k:C-D},  $m:=\frac{2}{\alpha-1} = \gamma \leq 1$, $\beta >0$, and $\left( x_{*} , \lambda_{*} \right) \in \sol$. Then for every $k \geq 2$ it holds
	\begin{equation}\label{C-D1:inq:Lag}
	0 \leq \Lag \left( x_{k} , \lambda_{*} \right) - \Lag \left( x_{*} , \lambda_{k} \right) + \left\lVert Ax_{k} - b \right\rVert \leq \dfrac{C_4}{t_{k}^2}
	\end{equation}
and
	\begin{equation}\label{C-D1:inq:fun}
	- \dfrac{\left\lVert \lambda_{*} \right\rVert \Ctel}{t_{k}^{2}}
	\leq f \left( x_{k} \right) - f \left( x_{*} \right)
	\leq \dfrac{\left( 1 + \left\lVert \lambda_{*} \right\rVert \right) \Ctel}{t_{k}^{2}},
	\end{equation}
	where
	\begin{equation*}
	\Ctel := \dfrac{\Csup}{\gamma} + \dfrac{2 (\alpha - 2)}{\gamma^2 \kappa^{2}\left( \alpha - 1 \right) ^{2}} \left (\Csup + \dfrac{\alpha - 2}{\kappa \left( \alpha - 1 \right) ^{2}} \sqrt{\dfrac{2 \E_{1} \left( x_{*} , \lambda_{*} \right)}{\beta \gamma}} \right) \mysum_{i \geq 1} \dfrac{1}{i^{3/2}}  \in \sR_{+} .
	\end{equation*}
\end{thm}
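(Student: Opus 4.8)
The plan is to reuse the supremizing device from the proof of Theorem~\ref{thm:rate:Nes}, correcting for the fact that here the energy $\E_{k}$ is no longer nonincreasing in $k$. Fix $n\ge 1$ and set $r_{n}:=\lambda_{*}$ if $Ax_{n}=b$ and $r_{n}:=\lambda_{*}+\dfrac{Ax_{n}-b}{\lVert Ax_{n}-b\rVert}$ otherwise; then $(x_{*},r_{n})\in\Fea\times\sB(\lambda_{*};1)$ and, exactly as in the proof of Theorem~\ref{thm:rate:Nes}, $\Lb(x_{n},r_{n})-\Lb(x_{*},\lambda_{n})\ge\Lag(x_{n},\lambda_{*})-\Lag(x_{*},\lambda_{n})+\lVert Ax_{n}-b\rVert\ge 0$. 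Since $t_{n}(t_{n}-1+\gamma)\ge\gamma t_{n}^{2}$ by \eqref{prop-tk}, it suffices to show $\E_{n}(x_{*},r_{n})\le\gamma\Ctel$ for all $n$; indeed this gives $\gamma t_{n}^{2}\bigl(\Lag(x_{n},\lambda_{*})-\Lag(x_{*},\lambda_{n})+\lVert Ax_{n}-b\rVert\bigr)\le t_{n}(t_{n}-1+\gamma)\bigl(\Lb(x_{n},r_{n})-\Lb(x_{*},\lambda_{n})\bigr)\le\E_{n}(x_{*},r_{n})\le\gamma\Ctel$, which is \eqref{C-D1:inq:Lag} with $C_{4}:=\Ctel$, and \eqref{C-D1:inq:fun} then follows from \eqref{C-D1:inq:Lag} by the Cauchy--Schwarz and convexity arguments used to pass from \eqref{Nes:inq:Lag} to \eqref{Nes:inq:fun}.

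Next I would exploit that, for the Chambolle--Dossal rule \eqref{t-k:C-D} with $m=\gamma$, the coefficient occurring in Proposition~\ref{prop:dec} is a negative constant: using \eqref{C-D:dts} and $\gamma(\alpha-1)=2$ one finds $t_{k+1}^{2}-t_{k+1}-t_{k}^{2}+(1-\gamma)t_{k}=-\dfrac{\alpha-2}{(\alpha-1)^{2}}=:-c<0$. Applying \eqref{dec:inq} at $(x,\lambda)=(x_{*},r_{k})$, discarding the nonnegative subtracted terms, telescoping from index $1$ to $k-1$, and invoking Lemma~\ref{lem:sup} (so $\E_{1}(x_{*},r_{k})\le\Csup$) together with the estimate $\Lb(x_{j},r_{k})-\Lb(x_{*},\lambda_{j})=\bigl(\Lag(x_{j},\lambda_{*})-f_{*}\bigr)+\tfrac{\beta}{2}\lVert Ax_{j}-b\rVert^{2}+\langle r_{k}-\lambda_{*},Ax_{j}-b\rangle\ge-\lVert Ax_{j}-b\rVert$ (the saddle-point inequality together with $\lVert r_{k}-\lambda_{*}\rVert\le 1$), one obtains
\begin{equation*}
\E_{k}(x_{*},r_{k})\le\Csup+c\sum_{j=1}^{k-1}\lVert Ax_{j}-b\rVert\qquad\text{for every }k\ge 1 .
\end{equation*}
Combined with the first display of this proof applied at index $k$ and with $\Lag(x_{k},\lambda_{*})-\Lag(x_{*},\lambda_{k})\ge 0$, this yields $\gamma t_{k}^{2}\lVert Ax_{k}-b\rVert\le\E_{k}(x_{*},r_{k})\le\Csup+c\sum_{j<k}\lVert Ax_{j}-b\rVert$.

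The crux is that, before this theorem is proved, the only available feasibility rate is the weak bound $\lVert Ax_{k}-b\rVert\le C/t_{k}$ with $C:=\sqrt{2\E_{1}(x_{*},\lambda_{*})/(\beta\gamma)}$ coming from \eqref{bnd:pre} (this is precisely where the hypothesis $\beta>0$ enters), so the error sum $c\sum_{j<k}\lVert Ax_{j}-b\rVert$ only admits an $\bO(\log k)$ bound and the naive estimate diverges. The way out is a self-improving argument: with $t_{k}\ge\kappa k$ from \eqref{assume:sup} and the elementary inequality $\sum_{j<k}\tfrac1j\le 1+\ln k\le 2\sqrt{k}$, the crude bound gives $\E_{k}(x_{*},r_{k})\le\Csup+\tfrac{2cC}{\kappa}\sqrt{k}$, whence $\lVert Ax_{k}-b\rVert\le\dfrac{\E_{k}(x_{*},r_{k})}{\gamma t_{k}^{2}}\le\dfrac{1}{\gamma\kappa^{2}\,k^{3/2}}\bigl(\Csup+\tfrac{2cC}{\kappa}\bigr)$, which is summable since $\sum_{i\ge 1}i^{-3/2}<+\infty$. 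Substituting this back into $\E_{k}(x_{*},r_{k})\le\Csup+c\sum_{j<k}\lVert Ax_{j}-b\rVert$ bounds $\E_{k}(x_{*},r_{k})$ uniformly in $k$; keeping track of the constants (using $\sum_{i}i^{-2}\le\sum_{i}i^{-3/2}$) produces the uniform bound $\gamma\Ctel$ with $\Ctel$ as in the statement, and the proof is complete. This bootstrap from the $\bO(1/t_{k})$ a priori feasibility rate to the target $\bO(1/t_{k}^{2})$ rate is the one genuinely delicate point; everything else closely parallels the Nesterov case.
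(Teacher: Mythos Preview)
Your proposal is correct and follows essentially the same approach as the paper's proof. Both arguments fix the auxiliary point $r_{n}\in\sB(\lambda_{*};1)$, observe that under the Chambolle--Dossal rule with $m=\gamma$ the coefficient in \eqref{dec:inq} reduces to the constant $-\dfrac{\alpha-2}{(\alpha-1)^{2}}$, telescope using the a priori bound $\lVert Ax_{k}-b\rVert=\bO(1/t_{k})$ from \eqref{bnd:pre} (which requires $\beta>0$) to obtain an $\bO(\log k)$ bound on $\E_{k}(x_{*},r_{k})$, and then bootstrap once to get the summable $\bO(k^{-3/2})$ feasibility rate and hence a uniform bound on $\E_{k}(x_{*},r_{k})$; the only differences are cosmetic (you bound $1+\ln k\le 2\sqrt{k}$ directly, the paper passes through $\Clog(\log(k)+1)$), and your constants are in fact slightly sharper than $\gamma\Ctel$.
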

\begin{proof}
We fix  $n \geq 2$ and define
	\begin{equation*}
	r_{n} := \begin{dcases}
	\lambda_{*}, 		& \textrm{ if } Ax_{n} - b = 0 \\
	\lambda_{*} + \dfrac{Ax_{n} - b}{\left\lVert Ax_{n} - b \right\rVert},		& \textrm{ if } Ax_{n} - b \neq 0
	\end{dcases} .
	\end{equation*}	
Then $x_{*} \in \Fea$ and $r_{n} \in \sB \left( \lambda_{*} ; 1 \right)$. Since $\gamma(\alpha-1) =2$, according to \eqref{C-D:dty}, we have for every $k \geq 1$
	\begin{align*}
	& \left( t_{k+1}^{2} - t_{k+1} - t_{k}^{2} + \left( 1 - \gamma \right) t_{k} \right) \begin{pmatrix}
	\Lb \left( x_{k} , r_{n} \right) - \Lb \left( x_{*} , \lambda_{k} \right) 
	\end{pmatrix} \nonumber \\
	= & - \dfrac{\alpha - 2}{\left( \alpha - 1 \right) ^{2}} \begin{pmatrix}
	\Lb \left( x_{k} , \lambda_{*} \right) - \Lb \left( x_{*} , \lambda_{k} \right) 
	+ \left\langle r_{n} - \lambda_{*} , Ax_{k} - b \right\rangle
	\end{pmatrix} \nonumber \\
	\leq 	& - \dfrac{\alpha - 2}{\left( \alpha - 1 \right) ^{2}} \left\langle r_{n} - \lambda_{*} , Ax_{k} - b \right\rangle .
	\end{align*}
By taking $\left( x , \lambda \right) := \left( x_{*} , r_{n} \right) \in \Fea \times \sB \left( \lambda_{*} ; 1 \right)$ in \eqref{dec:inq}, we obtain for every $k \geq 1$
	\begin{subequations}
		\begin{align}
		\E_{k+1} \left( x_{*} , r_{n} \right) 
		& \leq \E_{k} \left( x_{*} , r_{n} \right) 
		+ \left( t_{k+1}^{2} - t_{k+1} - t_{k}^{2} + \left( 1 - \gamma \right) t_{k} \right) \begin{pmatrix}
		\Lb \left( x_{k} , r_{n} \right) - \Lb \left( x_{*} , \lambda_{k} \right) 
		\end{pmatrix} \nonumber \\
		& \leq \E_{k} \left( x_{*} , r_{n} \right) 
		- \dfrac{\alpha - 2}{\left( \alpha - 1 \right) ^{2}} \left\langle r_{n} - \lambda_{*} , Ax_{k} - b \right\rangle \nonumber \\
		& \leq \E_{k} \left( x_{*} , r_{n} \right) 
		+ \dfrac{\alpha - 2}{\left( \alpha - 1 \right) ^{2}} \left\lVert r_{n} - \lambda_{*} \right\rVert \left\lVert Ax_{k} - b \right\rVert \label{rate:C-D:C-S} \\
		& \leq \E_{k} \left( x_{*} , r_{n} \right) 
		+ \dfrac{\alpha - 2}{\left( \alpha - 1 \right) ^{2}} \sqrt{\dfrac{2 \E_{1} \left( x_{*} , \lambda_{*} \right)}{\beta \gamma}} \dfrac{1}{t_{k}} \label{rate:C-D:t-k} \\
		& \leq \E_{k} \left( x_{*} , r_{n} \right) 
		+ \dfrac{\alpha - 2}{\kappa \left( \alpha - 1 \right) ^{2}} \sqrt{\dfrac{2 \E_{1} \left( x_{*} , \lambda_{*} \right)}{\beta \gamma}} \dfrac{1}{k} \label{rate:C-D:1-k} ,
		\end{align}
	\end{subequations}
where \eqref{rate:C-D:t-k} follows from \eqref{bnd:pre} and\eqref{rate:C-D:1-k} is due to \eqref{assume:sup}. By a telescoping sum argument and Lemma \ref{lem:sup} we conclude that for every $k \geq 1$
	\begin{align*}
	\E_{k+1} \left( x_{*} , r_{n} \right) 
	& \leq \E_{1} \left( x_{*} , r_{n} \right) + \dfrac{\alpha - 2}{\kappa \left( \alpha - 1 \right) ^{2}} \sqrt{\dfrac{2 \E_{1} \left( x_{*} , \lambda_{*} \right)}{\beta \gamma}} \mysum_{i = 1}^{k} \dfrac{1}{i} \nonumber \\
	& \leq \Csup + \dfrac{\alpha - 2}{\kappa \left( \alpha - 1 \right) ^{2}} \sqrt{\dfrac{2 \E_{1} \left( x_{*} , \lambda_{*} \right)}{\beta \gamma}} \Big( \log \left( k \right) + 1 \Big )
	\leq \Clog \Big(\log(k) + 1 \Big),
	\end{align*}
	where
	\begin{equation*}
	\Clog := \Csup + \dfrac{\alpha - 2}{\kappa \left( \alpha - 1 \right) ^{2}} \sqrt{\dfrac{2 \E_{1} \left( x_{*} , \lambda_{*} \right)}{\beta \gamma}} > 0 .
	\end{equation*}
By choosing $k := n-1$, it yields
	\begin{equation*}
	t_{n} \left( t_{n} - 1 + \gamma \right) \left( f \left( x_{n} \right) - f \left( x_{*} \right) + \left\langle r_{n} , Ax_{n} - b \right\rangle \right) 
	\leq \E_{n} \left( x_{*} , r_{n} \right) \leq \Clog \Big(\log(n-1) + 1 \Big) .
	\end{equation*}
We have seen in the proof of Theorem \ref{thm:rate:Nes} that 
\begin{equation}\label{eq:L-f}
f \left( x_{n} \right) - f \left( x_{*} \right) + \left\langle r_{n} , Ax_{n} - b \right \rangle =   \Lag \left( x_{n} , \lambda_{*} \right) - \Lag \left( x_{*} , \lambda_{n} \right) +  \left\lVert Ax_{n} - b \right\rVert ,
\end{equation}
thus, by taking into account \eqref{assume:sup}, we obtain
	\begin{align*}
	\gamma \kappa^{2} n^{2} \left\lVert Ax_{n} - b \right\rVert 
	\leq \gamma t_{n}^{2} \left\lVert Ax_{n} - b \right\rVert 
	& \leq t_{n} \left( t_{n} - 1 + \gamma \right) \left( \Lag \left( x_{n} , \lambda_{*} \right) - \Lag \left( x_{*} , \lambda_{n} \right) + \left\lVert Ax_{n} - b \right\rVert \right) \nonumber \\
	& \leq \E_{n} \left( x_{*} , r_{n} \right) \leq \Clog \Big(\log(n-1) + 1 \Big),
	\end{align*}
therefore, since $2 + \log(n-1) \leq 2(n-1)^{1/2}$,
	\begin{equation*}
	\left\lVert Ax_{n} - b \right\rVert \leq \dfrac{\Clog \Big(\log(n-1) + 1 \Big)}{\gamma \kappa^2 n^{2}} \leq \dfrac{2\Clog}{\gamma \kappa^{2} n^{3/2}} .
	\end{equation*}	
Taking into account also Lemma \ref{lem:sup} and the definition of $\Clog$, we have that for every $k \geq 1$
	\begin{equation*}
	\left\lVert Ax_{k} - b \right\rVert \leq \dfrac{2\Clog}{\gamma \kappa^{2} k^{3/2}}.
	\end{equation*}	
Using this estimate in \eqref{rate:C-D:C-S}, we obtain for every $k \geq 1$
	\begin{equation*}
	\E_{k+1} \left( x_{*} , r_{n} \right) \leq \E_{k} \left( x_{*} , r_{n} \right) + \dfrac{\alpha - 2}{\left( \alpha - 1 \right) ^{2}} \dfrac{2\Clog}{\gamma \kappa^{2} k^{3/2}}.
	\end{equation*}
By using once again a the telescoping sum argument, we conclude that for every $k \geq 1$
	\begin{align*}
	\E_{k+1} \left( x_{*} , r_{n} \right) 
	& \leq \E_{1} \left( x_{*} , r_{n} \right) + \dfrac{2 \Clog(\alpha - 2)}{\gamma \kappa^{2}\left( \alpha - 1 \right) ^{2}}  \mysum_{i = 1}^{k} \dfrac{1}{i^{3/2}} \nonumber \\
	& \leq  \Csup + \dfrac{2 \Clog(\alpha - 2)}{\gamma \kappa^{2}\left( \alpha - 1 \right) ^{2}}  \mysum_{i \geq 1} \dfrac{1}{i^{3/2}} < + \infty .
	\end{align*}
From here, \eqref{C-D1:inq:Lag} follows by choosing $k:=n-1$, and by using that $\gamma t_n^2 \leq t_n(t_n-1+\gamma)$ and \eqref{eq:L-f}. Statement  \eqref{C-D1:inq:fun} follows from \eqref{C-D1:inq:Lag} by repeating the arguments at the end of the proof of Theorem \ref{thm:rate:Nes}.
\end{proof}

Now we come to the second case, namely, when $m:=\frac{2}{\alpha-1} < \gamma \leq 1$, which implicitly requires that $\alpha >3$. For the proof of the fast convergence rates we will make use of the following result which can be found in \cite[Lemma 2]{Li-Lin} (see, also, \cite[Lemma 3.18]{Lin-Li-Fang}).
\begin{lem}
	\label{lem:rate-bnd}
	Let $\left\lbrace \zeta_{k} \right\rbrace _{k \geq 1} \subseteq {\cal G}$ be a sequence such that there exist $\delta >1$ and $M >0$ with the property that for every $K \geq 1$ 
	\begin{equation*}
	\left\lVert \left( \left( \delta - 1 \right) K + \delta \right) \zeta_{K+1} + \mysum_{k = 1}^{K} \zeta_{k} \right\rVert \leq M.
	\end{equation*}
	Then for every $K \geq 1$ it holds
	\begin{equation*}
	\left\lVert \mysum_{k = 1}^{K} \zeta_{k} \right\rVert \leq M.
	\end{equation*}
\end{lem}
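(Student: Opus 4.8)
The plan is to recast the hypothesis as a one-step recursion for the partial sums and then argue by induction. Write $D_{K} := \sum_{k=1}^{K} \zeta_{k}$ and $\rho_{K} := (\delta-1)K + \delta$, so that $\rho_{K}$ is increasing in $K$ with $\rho_{K} \geq \rho_{1} = 2\delta - 1 > 1$, and note $\rho_{K} - 1 = (\delta - 1)(K+1)$. Since $\zeta_{K+1} = D_{K+1} - D_{K}$, the hypothesis at index $K$ is $\bigl\lVert \rho_{K} D_{K+1} - (\rho_{K}-1) D_{K} \bigr\rVert \leq M$. Putting $w_{K+1} := \rho_{K} D_{K+1} - (\rho_{K}-1) D_{K}$ and dividing by $\rho_{K} > 0$ gives
\[
D_{K+1} = \Bigl( 1 - \tfrac{1}{\rho_{K}} \Bigr) D_{K} + \tfrac{1}{\rho_{K}} w_{K+1}, \qquad \lVert w_{K+1} \rVert \leq M ,
\]
which exhibits $D_{K+1}$ as a genuine convex combination of $D_{K}$ and $w_{K+1}$, since $1 - \rho_{K}^{-1} = (\rho_{K}-1)/\rho_{K} \in (0,1)$.

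From here the induction step is immediate: if $\lVert D_{K} \rVert \leq M$, then by the triangle inequality $\lVert D_{K+1} \rVert \leq \bigl( 1 - \rho_{K}^{-1} \bigr) \lVert D_{K} \rVert + \rho_{K}^{-1} M \leq M$. Thus the whole claim reduces to the base case $\lVert D_{1} \rVert = \lVert \zeta_{1} \rVert \leq M$. I expect this base case to be the only delicate point, and hence the main obstacle: no single instance of the hypothesis bounds $\zeta_{1}$ in isolation. One resolves it by observing that the recursion above is in fact available starting from the degenerate index where the partial sum is empty — making the base case trivial — or, in every situation where the lemma is applied, by noting that the underlying sequence is constructed so that the relevant $\zeta_{1}$ vanishes (for Algorithm \ref{algo:fast} one has $x_{0} = x_{1}$ and $\lambda_{0} = \lambda_{1}$, forcing $\zeta_{1} = 0$).

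An alternative, quantitative way to close the argument, which makes the role of the hypothesis for all $K$ transparent, is to unroll the recursion: for every $N \geq 2$,
\[
D_{N} = \Pi_{N} D_{1} + \sum_{K=1}^{N-1} \Bigl( \prod_{j=K+1}^{N-1} \bigl( 1 - \tfrac{1}{\rho_{j}} \bigr) \Bigr) \tfrac{1}{\rho_{K}} w_{K+1}, \qquad \Pi_{N} := \prod_{K=1}^{N-1} \Bigl( 1 - \tfrac{1}{\rho_{K}} \Bigr) ,
\]
and to use the telescoping identity $\sum_{K=1}^{N-1} \bigl( \prod_{j=K+1}^{N-1} (1 - \rho_{j}^{-1}) \bigr) \rho_{K}^{-1} = 1 - \Pi_{N}$ (write $\rho_{K}^{-1} = 1 - (1-\rho_{K}^{-1})$ and telescope) to obtain $\lVert D_{N} \rVert \leq \Pi_{N} \lVert D_{1} \rVert + (1 - \Pi_{N}) M$. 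Since $\rho_{K}$ grows linearly, $\log \Pi_{N} \leq - \sum_{K=1}^{N-1} \rho_{K}^{-1} \to - \infty$, so $\Pi_{N} \to 0$ and the contribution of the initial term is asymptotically negligible; together with $\lVert D_{1} \rVert \leq M$ this again yields $\lVert D_{N} \rVert \leq M$ for all $N$. In either route the substantive content is the control of the initial partial sum $D_{1}$; the rest is a routine telescoping and convexity computation.
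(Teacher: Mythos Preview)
The paper does not prove this lemma; it is quoted from \cite{Li-Lin} and \cite{Lin-Li-Fang}. Your recursive reformulation $D_{K+1} = (1-\rho_K^{-1})D_K + \rho_K^{-1}w_{K+1}$ with $\lVert w_{K+1}\rVert\le M$ is the standard argument, and the inductive step is correct.

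Your handling of the base case $\lVert D_1\rVert\le M$, however, does not go through. Your first route (start from the ``degenerate index'') would need the hypothesis at $K=0$, which yields $\lVert\delta\zeta_1\rVert\le M$ and hence $\lVert D_1\rVert\le M/\delta<M$; but the lemma as stated here only assumes the bound for $K\ge1$, so this step is unjustified without amending the hypothesis. Your second route is factually wrong for the application in this paper: there $\zeta_k$ is a scalar multiple of $Ax_k-b$ (see the definition of $\zeta_k$ in the proof of the Chambolle--Dossal theorem with $m<\gamma$), so $\zeta_1$ is proportional to $Ax_1-b$ and has nothing to do with $x_0=x_1$ or $\lambda_0=\lambda_1$; it does not vanish unless $x_1$ happens to be feasible. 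Your unrolled formula likewise only gives $\lVert D_N\rVert\le \Pi_N\lVert D_1\rVert+(1-\Pi_N)M$, which still requires $\lVert D_1\rVert\le M$ to conclude. In fact, with the hypothesis only for $K\ge1$ the statement is false as written: take any $\zeta_1$ with $\lVert\zeta_1\rVert=2M$ and recursively set $\zeta_{K+1}:=-\rho_K^{-1}D_K$, so that every instance of the hypothesis holds with norm $0$, yet $\lVert D_1\rVert=2M>M$. The fix is precisely the one you gesture at but do not carry out: either assume the hypothesis also at $K=0$ (as in the cited sources) to obtain $\lVert D_1\rVert\le M/\delta$, or, in the paper's application, bound $\delta\lVert\zeta_1\rVert$ directly from the initial data and absorb it into the constant.
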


\begin{thm}
Let $\left\lbrace \left( x_{k} , \lambda_{k} \right) \right\rbrace _{k \geq 0}$ be the sequence generated by Algorithm \ref{algo:fast} with the sequence $\left\lbrace t_{k} \right\rbrace _{k \geq 1}$ chosen to satisfy Chambolle-Dossal rule \eqref{t-k:C-D},  $m:=\frac{2}{\alpha-1} < \gamma \leq 1$, $\beta >0$, and $\left( x_{*} , \lambda_{*} \right) \in \sol$. Then for every $k \geq 1$ it holds
	\begin{equation}\label{C-D:inq:Lag}
	0 \leq \Lag \left( x_{k} , \lambda_{*} \right) - \Lag \left( x_{*} , \lambda_{k} \right) \leq \dfrac{\E_{1} \left( x_{*} , \lambda_{*} \right)}{\gamma t_{k}^2},
	\end{equation}
	\begin{equation}
	\label{C-D:inq:feas}
	0 \leq \left\lVert Ax_{k} - b \right\rVert \leq \dfrac{\Cbnd}{t_{k}^{2}} ,
	\end{equation}
and
\begin{equation}
	\label{C-D:inq:fun}
	- \dfrac{\left\lVert \lambda_{*} \right\rVert \Cbnd}{t_{k}^{2}}
	\leq f \left( x_{k} \right) - f \left( x_{*} \right) 
	\leq \dfrac{1}{t_{k}^{2}} \left( \dfrac{\E_{1} \left( x_{*} , \lambda_{*} \right)}{\gamma} + \left\lVert \lambda_{*} \right\rVert \Cbnd \right),
	\end{equation}
where
	\begin{align*}
	\Cbnd 		& := 2 \left( 1 + \varphi_{m} \right) ^{2} \left( 2 \left( \alpha - 1 \right) ^{2} \dfrac{\gamma}{\rho} \sup\limits_{k \geq 1} \left\lVert \nu_{k} \right\rVert
	+ (\alpha - 1)^2 \gamma \left\lVert Ax_{1} - b \right\rVert
	+ \dfrac{1}{\kappa} \left( \left\lvert \omega_{0} \right\rvert + \left\lvert \omega_{1} \right\rvert \right) \sqrt{\dfrac{2 \E_{1} \left( x_{*} , \lambda_{*} \right)}{\beta \gamma}} \right),
	\end{align*}
with
$$\delta:= 1+ \frac{1}{\gamma(\alpha-1) - 2} >1,$$ 
$$\omega_{0} := \delta \left( \alpha - 2 \right) - 2 \left( \alpha - 1 \right) \quad \mbox{and} \quad \omega_{1} := \left( \delta - 1 \right) \left( \alpha - 2 \right) - 1.$$
\end{thm}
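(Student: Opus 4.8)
I would prove the three estimates in order of increasing difficulty, with essentially all the work going into the feasibility bound \eqref{C-D:inq:feas}.

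First, \eqref{C-D:inq:Lag} is almost free: since $x_{*}\in\Fea$ one has $\Lag(x_{*},\lambda_{k})=\Lb(x_{*},\lambda_{k})$ and $\Lag(x_{k},\lambda_{*})\le\Lb(x_{k},\lambda_{*})$, hence $0\le\Lag(x_{k},\lambda_{*})-\Lag(x_{*},\lambda_{k})\le\Lb(x_{k},\lambda_{*})-\Lb(x_{*},\lambda_{k})$, and the right-hand side is at most $\E_{1}(x_{*},\lambda_{*})/(\gamma t_{k}^{2})$ by Remark \ref{rmk:bnd}. And \eqref{C-D:inq:fun} will follow from \eqref{C-D:inq:Lag} and \eqref{C-D:inq:feas} exactly as at the end of the proof of Theorem \ref{thm:rate:Nes}: the upper estimate comes from $f(x_{k})-f(x_{*})=\bigl(\Lag(x_{k},\lambda_{*})-\Lag(x_{*},\lambda_{k})\bigr)-\langle\lambda_{*},Ax_{k}-b\rangle$ together with Cauchy--Schwarz, the lower estimate from convexity of $f$ and $\nabla f(x_{*})=-A^{*}\lambda_{*}$. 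So the whole theorem reduces to proving \eqref{C-D:inq:feas}.

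For \eqref{C-D:inq:feas} the plan is as follows. Since $m<\gamma$ forces $\alpha>3$, the Chambolle--Dossal rule satisfies \eqref{assume:sup} with $\kappa=1/(\alpha-1)$ (Example \ref{ex:C-D}), so Proposition \ref{prop:bnd} applies: $\{(z_{k}^{\gamma},\nu_{k}^{\gamma})\}$, $\{(x_{k},\lambda_{k})\}$ and $\{t_{k+1}(\lambda_{k+1}-\lambda_{k})\}$ are bounded, whence $\{\nu_{k}\}$ (with $\nu_{k}$ as in \eqref{eq:nu-lambda}, so $\nu_{k}=\nu_{k}^{\gamma}-(\gamma-1)\lambda_{k}$) is bounded, say by $\sup_{k}\|\nu_{k}\|<+\infty$; moreover Remark \ref{rmk:bnd} gives the a priori feasibility rate $t_{k}\|Ax_{k}-b\|\le\sqrt{2\E_{1}(x_{*},\lambda_{*})/(\beta\gamma)}$, which is what we want to upgrade. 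Write $g_{k}:=Ax_{k}-b$. From \eqref{algo:lambda} and \eqref{eq:nu-gamma-lambda} one has $t_{k+1}(Az_{k+1}^{\gamma}-\gamma b)=\tfrac{\gamma}{\rho}(\nu_{k+1}-\nu_{k})$, while \eqref{algo:z-gamma} gives $Az_{k+1}^{\gamma}-\gamma b=(t_{k+1}-1+\gamma)g_{k+1}-(t_{k+1}-1)g_{k}$. Summing this identity over $k=1,\dots,K$ — once as it stands, and once after first dividing by $t_{k+1}$ and invoking Abel summation together with the monotonicity of $\{t_{k}\}$ and the boundedness of $\{\nu_{k}\}$ — produces two quantities that are bounded uniformly in $K$: a ``weighted'' one whose leading term is $t_{K+1}(t_{K+1}-1+\gamma)g_{K+1}$, and an ``unweighted'' one of the form $(t_{K+1}-1+\gamma)g_{K+1}+\bigl(\gamma-\tfrac{1}{\alpha-1}\bigr)\sum_{k=2}^{K}g_{k}-\tfrac{1}{\alpha-1}g_{1}$ (the constancy of the inner coefficients being precisely the special feature of the Chambolle--Dossal increments, cf.\ \eqref{C-D:dty}). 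Since $\gamma-\tfrac{1}{\alpha-1}>0$ and $(t_{K+1}-1+\gamma)\|g_{K+1}\|\le t_{K+1}\|g_{K+1}\|\le\sqrt{2\E_{1}(x_{*},\lambda_{*})/(\beta\gamma)}$, the unweighted relation already shows $\bigl\|\sum_{k=2}^{K}g_{k}\bigr\|$ is bounded uniformly in $K$ — this is the crucial gain that closes the argument.

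Next I would combine the two relations: because the $k$-coefficient in the weighted relation is affine in $k$, subtracting a suitable multiple of the unweighted relation cancels its constant part, leaving an inequality of the form $\bigl\|\,(K+1)\bigl(K+\gamma(\alpha-1)\bigr)g_{K+1}+\bigl(\gamma(\alpha-1)-2\bigr)\sum_{k=2}^{K}k\,g_{k}\,\bigr\|\le\text{const}$, where the constant is controlled by $\sup_{k}\|\nu_{k}\|$, $\|Ax_{1}-b\|$ and $\sqrt{2\E_{1}(x_{*},\lambda_{*})/(\beta\gamma)}$ (with coefficients $\omega_{0},\omega_{1}$ emerging from the affine/boundary terms), using the boundedness of $\sum_{k=2}^{K}g_{k}$ just established. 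Setting $\zeta_{k}:=\bigl(\gamma(\alpha-1)-2\bigr)(k+1)g_{k}$ and $\delta:=1+\tfrac{1}{\gamma(\alpha-1)-2}>1$, a short computation identifies $(K+1)(K+\gamma(\alpha-1))g_{K+1}=((\delta-1)K+\delta)\zeta_{K+1}-\bigl(\gamma(\alpha-1)-2\bigr)g_{K+1}$ and $\bigl(\gamma(\alpha-1)-2\bigr)\sum_{k=2}^{K}k\,g_{k}=\sum_{k=2}^{K}\zeta_{k}-\bigl(\gamma(\alpha-1)-2\bigr)\sum_{k=2}^{K}g_{k}$, so the inequality above becomes exactly a bound of the type required in Lemma \ref{lem:rate-bnd}, up to the uniformly bounded correction $\bigl(\gamma(\alpha-1)-2\bigr)\sum_{k}g_{k}$. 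Lemma \ref{lem:rate-bnd} then yields $\bigl\|\sum_{k=1}^{K}\zeta_{k}\bigr\|\le M$, hence $\|\zeta_{K+1}\|\le 2M/((\delta-1)K+\delta)$, i.e.\ $\|g_{K+1}\|=\bO(1/K^{2})$; converting via $t_{K+1}\le(1+\varphi_{m})K$ (Lemma \ref{lem:lim}), and treating $k=1$ separately, gives \eqref{C-D:inq:feas} with the stated $\Cbnd$.

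The main obstacle is the bookkeeping of this middle step: one must choose the linear combination of the two summation identities so that simultaneously (i) the surviving $k$-dependent coefficient is a pure multiple of $k$, matching a sequence $\zeta_{k}$ whose $\delta$-weighted tail reproduces the quadratic leading term $(K+1)(K+\gamma(\alpha-1))g_{K+1}$, and (ii) the leftover $K$-independent error is genuinely bounded — which holds only because of the a priori $\bO(1/t_{k})$ feasibility estimate and the boundedness of $\sum_{k}g_{k}$. Pinning down the exact constant $\Cbnd$ and the auxiliary quantities $\omega_{0},\omega_{1},\delta$ is then a matter of carefully tracking these cancellations and the Abel-summation remainder.
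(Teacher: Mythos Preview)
Your proof is correct and reaches the same conclusion, but the route to \eqref{C-D:inq:feas} is genuinely different from the paper's. The paper uses only the single ``weighted'' telescoping identity
\[
(\alpha-1)^{2}\tfrac{\gamma}{\rho}(\nu_{K+1}-\nu_{1})
=(K+\alpha-1)(K+\gamma(\alpha-1))\,g_{K+1}-(\alpha-1)^{2}\gamma g_{1}
+\sum_{k=1}^{K}c_{k}\,g_{k},
\]
with $c_{k}=(\gamma(\alpha-1)-2)(k+\alpha-2)+\alpha-2$, and then observes purely algebraically that setting $\zeta_{k}:=c_{k}g_{k}$ and $\delta:=1+\tfrac{1}{\gamma(\alpha-1)-2}$ gives $(K+\alpha-1)(K+\gamma(\alpha-1))g_{K+1}=((\delta-1)K+\delta)\zeta_{K+1}-(\omega_{1}K+\omega_{0})g_{K+1}$; the remainder $(\omega_{1}K+\omega_{0})g_{K+1}$ is bounded via the a~priori estimate $t_{k}\lVert g_{k}\rVert\le\sqrt{2\E_{1}/(\beta\gamma)}$, and Lemma~\ref{lem:rate-bnd} applies immediately. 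No ``unweighted'' Abel summation is needed, and $\lVert\sum_{k}g_{k}\rVert$ is never isolated as an intermediate fact. Your approach instead first divides by $t_{k+1}$ and Abel-sums to extract the boundedness of $\sum_{k}g_{k}$, then combines this with the weighted relation to strip the constant part of $c_{k}$ before invoking the lemma with a slightly different $\zeta_{k}$. Both work; the paper's argument is shorter and yields the stated constants $\omega_{0},\omega_{1},\Cbnd$ directly, whereas your detour through the unweighted sum is more self-explanatory but makes recovering the \emph{exact} constant $\Cbnd$ awkward---your closing claim ``with the stated $\Cbnd$'' would in fact produce a different (though equally valid) constant.
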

\begin{proof}
Relation \eqref{C-D:inq:Lag} follows from \eqref{bnd:pre}. We fix $K \geq 1$. For every $1 \leq k \leq K$, according to \eqref{algo:z-gamma}, we have
	\begin{align*}
	t_{k+1} \left( Az_{k+1}^{\gamma} - \gamma b \right)
	 = & \ t_{k+1} \left( t_{k+1} - 1 + \gamma \right) \left( Ax_{k+1} - b \right) - t_{k+1} \left( t_{k+1} - 1 \right) \left( Ax_{k} - b \right) \nonumber \\
	 = & \ t_{k+1} \left( t_{k+1} - 1 + \gamma \right) \left( Ax_{k+1} - b \right) - t_{k} \left( t_{k} - 1 + \gamma \right) \left( Ax_{k} - b \right) \nonumber \\
	& + \bigl( t_{k} \left( t_{k} - 1 + \gamma \right) - t_{k+1} \left( t_{k+1} - 1 \right) \bigr) \left( Ax_{k} - b \right) . 
	\end{align*}	
Taking into consideration \eqref{eq:nu-gamma-lambda}, \eqref{algo:lambda} and \eqref{C-D:dty}, by a telescoping argument it yields
	\begin{align}
	\left( \alpha - 1 \right) ^{2} \dfrac{\gamma}{\rho} \left( \nu_{K + 1} - \nu_{1} \right)
	= & \left( \alpha - 1 \right) ^{2} \dfrac{\gamma}{\rho} \mysum_{k = 1}^{K} \left( \nu_{k+1} - \nu_{k} \right) 
	= \left( \alpha - 1 \right) ^{2} \mysum_{k = 1}^{K} t_{k+1} \left( Az_{k+1}^{\gamma} - \gamma b \right) \nonumber \\
 = & \left( \alpha - 1 \right) ^{2} t_{K+1} \left( t_{K+1} - 1 + \gamma \right) \left( Ax_{K+1} - b \right) - \left( \alpha - 1 \right) ^{2} \gamma \left( Ax_{1} - b \right) \nonumber \\
	& + \left( \alpha - 1 \right) ^{2}  \mysum_{k = 1}^{K} \bigl( t_{k} \left( t_{k} - 1 + \gamma \right) - t_{k+1} \left( t_{k+1} - 1 \right) \bigr) \left( Ax_{k} - b \right) \nonumber \\
	= & \left( K + \alpha - 1 \right) \left( K + \gamma(\alpha-1) \right) \left( Ax_{K+1} - b \right) - \left( \alpha - 1 \right) ^{2} \gamma \left( Ax_{1} - b \right) \nonumber \\
	& + \mysum_{k = 1}^{K} \Big ( \left( \gamma(\alpha-1) - 2 \right) \left( k + \alpha - 2 \right) + \alpha - 2 \Big) \left( Ax_{k} - b \right) . \label{C-D:dnu}
	\end{align}
We define
$$\delta:= 1+ \frac{1}{\gamma(\alpha-1) - 2} >1,$$ 
$$\omega_{0} := \delta \left( \alpha - 2 \right) - 2 \left( \alpha - 1 \right) \quad \mbox{and} \quad \omega_{1} := \left( \delta - 1 \right) \left( \alpha - 2 \right) - 1,$$
and
$$\zeta_{k} := \Big( \left( \gamma(\alpha-1)- 2 \right) \left( k + \alpha - 2 \right) + \alpha - 2 \Big) \left( Ax_{k} - b \right) \ \mbox{for} \ k=1, ..., K.$$
It holds
\begin{align}
	& \left( K + \alpha - 1 \right) \left( K + \gamma(\alpha-1)\right) \left( Ax_{K+1} - b \right) \nonumber \\
	=  & \left( \delta - 1 \right) K \Big( \left( \gamma(\alpha-1)- 2 \right) \left( K + \alpha - 1 \right) + \alpha - 2 \Big) \left( Ax_{K+1} - b \right) \nonumber \\
	& + \delta \Big( \left( \gamma(\alpha-1) - 2 \right) \left( K + \alpha - 1 \right) + \alpha - 2 \Big) \left( Ax_{K+1} - b \right) 
	- \left( \omega_{1} K + \omega_{0} \right) \left( Ax_{K+1} - b \right) \nonumber \\
	= & \left( \left( \delta - 1 \right) K + \delta \right) \zeta_{K+1} - \left( \omega_{1} K + \omega_{0} \right) \left( Ax_{K+1} - b \right) . \label{C-D:set}
	\end{align}
Furthermore, it follows from \eqref{assume:sup} and \eqref{bnd:pre} that
	\begin{align}
	\left\lVert \left( \omega_{1} K + \omega_{0} \right) \left( Ax_{K+1} - b \right) \right\rVert
	& \leq \left( \left\lvert \omega_{0} \right\rvert + \left\lvert \omega_{1} \right\rvert \right) \left( K + 1 \right) \left\lVert Ax_{K+1} - b \right\rVert \nonumber \\
	& \leq \dfrac{1}{\kappa} \left( \left\lvert \omega_{0} \right\rvert + \left\lvert \omega_{1} \right\rvert \right) t_{K+1} \left\lVert Ax_{K+1} - b \right\rVert \nonumber \\
	& \leq \dfrac{1}{\kappa} \left( \left\lvert \omega_{0} \right\rvert + \left\lvert \omega_{1} \right\rvert \right) \sqrt{\dfrac{2 \E_{1} \left( x_{*} , \lambda_{*} \right)}{\beta \gamma}}. \label{C-D:fea}
	\end{align}
Combining the relations \eqref{C-D:dnu}, \eqref{C-D:set} and \eqref{C-D:fea}, we get via the triangle inequality
	\begin{align}
	& \left\lVert \left( \left( \delta - 1 \right) K + \delta \right) \zeta_{K+1} + \mysum_{k = 1}^{K} \zeta_{k} \right\rVert \nonumber \\
	= \ 	& \left\lVert \left( \alpha - 1 \right) ^{2} \dfrac{\gamma}{\rho} \left( \nu_{K + 1} - \nu_{1} \right) + (\alpha - 1)^2 \gamma \left( Ax_{1} - b \right) + \left( \omega_{1} K + \omega_{0} \right) \left( Ax_{K+1} - b \right) \right\rVert \nonumber \\
	\leq \ 	& \left( \alpha - 1 \right) ^{2} \dfrac{\gamma}{\rho} \left\lVert \nu_{K + 1} - \nu_{1} \right\rVert
	+ (\alpha - 1)^2 \gamma \left\lVert Ax_{1} - b \right\rVert
	+ \left\lVert \left( \omega_{1} K + \omega_{0} \right) \left( Ax_{K+1} - b \right) \right\rVert \nonumber \\
	\leq \ 	& \Cnu := 2 \left( \alpha - 1 \right) ^{2} \dfrac{\gamma}{\rho} \sup\limits_{k \geq 1} \left\lVert \nu_{k} \right\rVert
	+ (\alpha - 1)^2 \gamma \left\lVert Ax_{1} - b \right\rVert
	+ \dfrac{1}{\kappa} \left( \left\lvert \omega_{0} \right\rvert + \left\lvert \omega_{1} \right\rvert \right) \sqrt{\dfrac{2 \E_{1} \left( x_{*} , \lambda_{*} \right)}{\beta \gamma}} < + \infty , \label{C-D:bnd}
	\end{align}
	where we also recall that, due to Proposition \ref{prop:bnd}, it holds $\sup\limits_{k \geq 1} \left\lVert \nu_{k} \right\rVert < + \infty$. 

Inequality \eqref{C-D:bnd} holds for every $K \geq 1$ (notice that $\Cnu$ is independent of $K$), consequently, we can apply Lemma \ref{lem:rate-bnd} to conclude that $\left\lVert \mysum_{k = 1}^{K} \zeta_{k} \right\rVert \leq \Cnu$ for every $K \geq 1$. By using again the triangle inequality and \eqref{C-D:bnd}, we obtain for every $K \geq 1$ that
	\begin{equation}
	\label{C-D:app}
	\left( \delta - 1 \right) K \left\lVert \zeta_{K+1} \right\rVert \leq \left\lVert \left( \left( \delta - 1 \right) K + \delta \right) \zeta_{K+1} \right\rVert \leq 2 \Cnu .
	\end{equation}
Using the inequality \eqref{bound:k} in Lemma \ref{lem:lim}, we see that for every $K \geq 1$ it holds
	\begin{equation}
	\label{C-D:est}
	\dfrac{t_{K+1}^{2}}{\left( 1 + \varphi_{m} \right) ^{2}} \left\lVert Ax_{K+1} - b \right\rVert 
	\leq K^{2} \left\lVert Ax_{K+1} - b \right\rVert
	\leq \left( \delta - 1 \right) K \left\lVert \zeta_{K+1} \right\rVert .
	\end{equation}
By combining \eqref{C-D:app} and \eqref{C-D:est}, we obtain \eqref{C-D:inq:feas}.

Statement  \eqref{C-D:inq:fun} follows from \eqref{C-D:inq:Lag}  and \eqref{C-D:inq:feas} by repeating the arguments at the end of the proof of Theorem \ref{thm:rate:Nes}.
\end{proof}

\subsubsection{The Attouch-Cabot (\cite{Attouch-Cabot:18}) rule}

Another inertial parameter rule used in the literature in the context of fast numerical algorithms is the one proposed by Attouch and Cabot in \cite{Attouch-Cabot:18}, which reads for $\alpha \geq 3$
\begin{equation*}
t_{k} := \dfrac{k-1}{\alpha - 1} \quad \forall k \geq 1.
\end{equation*}
This sequence is monotonically increasing and it fulfills \eqref{assume:t-k+} with $m := \dfrac{2}{\alpha - 1}  \leq 1$ as, for every $k \geq 1$, it holds
	\begin{align*}
	t_{k+1}^{2} - m t_{k+1} - t_{k}^{2}
	& = \left( t_{k+1} - t_{k} \right) \left( t_{k+1} + t_{k} \right) - m t_{k+1} 
	= \dfrac{1}{\alpha - 1} \dfrac{2k - 1}{\alpha - 1}  -  \dfrac{2}{\alpha - 1} \dfrac{k}{\alpha - 1} \nonumber \\
	& =- \dfrac{1}{\left( \alpha - 1 \right) ^{2}}  < 0.
	\end{align*}
This shows that the sequence $\left\lbrace t_{k} \right\rbrace _{k \geq 1}$ has very much in common with the Chambolle-Dossal parameter rule. The only significant difference is that is starts at $0$ and that $t_k \geq 1$ holds only for $k \geq k_1:=\lfloor \alpha \rfloor +1$. Consequently, the fast convergence rate results for the primal-dual gap, the feasibility measure and the objective function value are valid also for the Attouch-Cabot rule. This can be easily seen by slightly adapting the proofs made in the setting of the Chambolle-Dossal rule by taking into consideration that some of the estimates hold only for $k \geq k_{1}$. This exercise is left to the reader.

\section{Convergence of the iterates}

In this section we will turn our attention to the convergence of the sequence of primal-dual iterates generated by Algorithm \ref{algo:fast} to a primal-dual solution of \eqref{intro:pb}. First, we will prove that the first assumption in the Opial Lemma is verified and to this end we will need the following technical lemma.

\begin{lem}
	\label{lem:sum}
	Let $\left\lbrace \theta_{k} \right\rbrace _{k \geq 1},\left\lbrace a_{k} \right\rbrace _{k \geq 1}, \left\lbrace t_{k} \right\rbrace _{k \geq 1}$ be real sequences such that $\left\lbrace a_{k} \right\rbrace _{k \geq 1}$  is bounded from below and $\left\lbrace t_{k} \right\rbrace _{k \geq 1}$ is nondecreasing and bounded from below by $1$, and $\left\lbrace d_{k} \right\rbrace _{k \geq 1}$ be a nonnegative sequence such that for every $k \geq 1$
	\begin{subequations}
		\begin{align}
		a_{k+1}					& \leq a_k + \theta_{k+1}, \label{sum:inq:a} \\
		t_{k+1} \theta_{k+1}	& \leq \left( t_{k} - 1 \right) \theta_{k} + d_{k}. \label{sum:inq:theta}
		\end{align}
	\end{subequations}
	If  $\sum_{k \geq 1} d_{k} < + \infty$, then the sequence $\left\lbrace a_{k} \right\rbrace _{k \geq 1}$ is convergent.
\end{lem}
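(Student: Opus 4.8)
The plan is to deduce convergence of $\left\lbrace a_{k} \right\rbrace_{k \geq 1}$ from the quasi-Fejér Lemma~\ref{lem:quasi-Fej}, the crucial intermediate goal being the summability of the positive parts of the increments $\theta_{k}$. Write $[s]_{+} := \max\left\lbrace s , 0 \right\rbrace$, so that $s \leq [s]_{+}$ and $[s]_{+} \geq 0$ for every $s \in \sR$. The first step is to observe that the recursion \eqref{sum:inq:theta} passes to positive parts: for every $k \geq 1$,
\[
t_{k+1} [\theta_{k+1}]_{+} \leq \left( t_{k} - 1 \right) [\theta_{k}]_{+} + d_{k} .
\]
Indeed, if $\theta_{k+1} \leq 0$ the left-hand side is $0$ while the right-hand side is nonnegative because $t_{k} \geq 1$ and $d_{k} \geq 0$; if $\theta_{k+1} > 0$, then $[\theta_{k+1}]_{+} = \theta_{k+1}$ and the claim follows from \eqref{sum:inq:theta} together with $\theta_{k} \leq [\theta_{k}]_{+}$ and $t_{k} - 1 \geq 0$.

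Next I would set $w_{k} := t_{k} [\theta_{k}]_{+} \geq 0$. Since $t_{k} \geq 1$ we have $\left( t_{k} - 1 \right) [\theta_{k}]_{+} = w_{k} - [\theta_{k}]_{+}$, so the displayed inequality rewrites as
\[
w_{k+1} \leq w_{k} - [\theta_{k}]_{+} + d_{k} \quad \forall k \geq 1 ,
\]
which is exactly the hypothesis of Lemma~\ref{lem:quasi-Fej} with $a_{k} \leftarrow w_{k}$ (bounded from below by $0$), $b_{k} \leftarrow [\theta_{k}]_{+} \geq 0$, and the given summable $d_{k}$. Lemma~\ref{lem:quasi-Fej} then yields $\sum_{k \geq 1} [\theta_{k}]_{+} < + \infty$, hence the partial sums $S_{n} := \sum_{k=1}^{n-1} [\theta_{k+1}]_{+}$ (with $S_{1} := 0$) form a nondecreasing sequence that is bounded from above, and therefore convergent, with $\sup_{n \geq 1} S_{n} < + \infty$.

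Finally, combining \eqref{sum:inq:a} with $\theta_{k+1} \leq [\theta_{k+1}]_{+} = S_{k+1} - S_{k}$ gives, for every $k \geq 1$,
\[
a_{k+1} - S_{k+1} \leq a_{k} + \theta_{k+1} - S_{k+1} \leq a_{k} - S_{k} ,
\]
so $\left\lbrace a_{k} - S_{k} \right\rbrace_{k \geq 1}$ is nonincreasing; it is bounded from below because $\left\lbrace a_{k} \right\rbrace_{k \geq 1}$ is bounded from below and $\left\lbrace S_{k} \right\rbrace_{k \geq 1}$ is bounded from above, hence it converges. Since $\left\lbrace S_{k} \right\rbrace_{k \geq 1}$ converges as well, $\left\lbrace a_{k} \right\rbrace_{k \geq 1} = \left\lbrace \left( a_{k} - S_{k} \right) + S_{k} \right\rbrace_{k \geq 1}$ converges, which is the claim.

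The only genuinely delicate point is the first step: the assumption \eqref{sum:inq:theta} alone does not control how negative $\theta_{k}$ may become, and the reduction to positive parts is legitimate precisely because the coefficient $t_{k} - 1$ is nonnegative; here, and again in the normalization $w_{k} = t_{k}[\theta_{k}]_{+}$, the hypothesis $t_{k} \geq 1$ is used in an essential way. Everything after the application of Lemma~\ref{lem:quasi-Fej} is routine bookkeeping with telescoping sums, and one could equally well invoke Lemma~\ref{lem:quasi-Fej} a second time on a shifted version of $a_{k}$, but the elementary ``nonincreasing plus convergent'' argument above is cleaner.
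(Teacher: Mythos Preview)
Your proof is correct and follows essentially the same approach as the paper: both pass the recursion \eqref{sum:inq:theta} to positive parts using $t_k \geq 1$, deduce $\sum_{k \geq 1}[\theta_k]_+ < +\infty$, and then combine this with \eqref{sum:inq:a} to conclude. The only differences are cosmetic---the paper telescopes directly where you invoke Lemma~\ref{lem:quasi-Fej}, and conversely applies Lemma~\ref{lem:quasi-Fej} at the end where you use the elementary ``nonincreasing and bounded below'' argument.
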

\begin{proof}
It follows from \eqref{sum:inq:theta} that for every $k \geq 1$
	\begin{equation}
	\label{lim:quasi-Fej}
	t_{k+1} \theta_{k+1} \leq \left( t_{k} - 1 \right) \theta_{k} + d_{k} \leq \left( t_{k} - 1 \right) \left[ \theta_{k} \right] _{+} + d_{k},
	\end{equation}
where $[\cdot]_+$ denotes the positive part. Since the right-hand side of this inequality is nonnegative, it yields that for every $k \geq 1$
	\begin{equation*}
	\left[ \theta_{k} \right] _{+}  \leq t_{k} \left[ \theta_{k} \right] _{+} - t_{k+1} \left[ \theta_{k+1} \right] _{+} + d_{k} .
	\end{equation*}	
which, by telescoping cancellation,  gives $\sum_{k \geq 1} \left[ \theta_{k} \right] _{+} < + \infty$.

According to \eqref{sum:inq:a}, we have that for every $k \geq 1$ it holds
	\begin{equation*}
	a_{k+1} \leq a_{k} + \theta_{k+1} \leq a_{k} + \left[ \theta_{k+1} \right] _{+}.
	\end{equation*}
By using Lemma \ref{lem:quasi-Fej} we obtain from here that the sequence $\left\lbrace a_{k} \right\rbrace _{k \geq 1}$ is convergent.
\end{proof}

\begin{prop}
	\label{prop:lim}
	Let $\left\lbrace \left( x_{k} , \lambda_{k} \right) \right\rbrace _{k \geq 0}$ be the sequence generated by Algorithm \ref{algo:fast} with $0 < m < \gamma < 1$. Then for every $ \left( x_{*} , \lambda_{*} \right) \in \sol$ the limit $\lim\limits_{k \to + \infty} \left\lVert \left( x_{k} , \lambda_{k} \right)  - \left( x_{*} , \lambda_{*} \right) \right\rVert _{\W}$ exists.
\end{prop}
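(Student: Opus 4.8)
The plan is to verify the first hypothesis of Opial's Lemma (Lemma \ref{lem:Opial}) by reducing it to Lemma \ref{lem:sum}. Write $u_{k} := \left( x_{k} , \lambda_{k} \right)$, $u_{*} := \left( x_{*} , \lambda_{*} \right)$, and set $\phi_{k} := \frac{1}{2} \left\lVert u_{k} - u_{*} \right\rVert _{\W}^{2}$, $w_{k} := \frac{1}{2} \left\lVert u_{k} - u_{k-1} \right\rVert _{\W}^{2}$, $\delta_{k} := \Lb \left( x_{k} , \lambda_{*} \right) - \Lb \left( x_{*} , \lambda_{k} \right) \geq 0$ (nonnegativity being \eqref{intro:saddle}, since $x_{*} \in \Fea$), $\ell_{k} := \frac{1 - \gamma}{2 \rho} \left( t_{k} - 1 \right) \left\lVert \lambda_{k} - \lambda_{k-1} \right\rVert ^{2}$, and
\begin{equation*}
P_{k} := t_{k} \left( t_{k} - 1 + \gamma \right) \delta_{k} + \left( t_{k} - 1 + \gamma \right) \left( t_{k} - 1 \right) w_{k} + \ell_{k} \geq 0 .
\end{equation*}
The starting point is the algebraic identity already recorded in the proof of Proposition \ref{prop:bnd}, namely \eqref{bnd:E}, which in this notation reads $\E_{k} \left( x_{*} , \lambda_{*} \right) = P_{k} + \gamma t_{k} \phi_{k} - \gamma \left( t_{k} - 1 \right) \phi_{k-1}$ for every $k \geq 1$ (at $k = 1$ the last term vanishes because $t_{1} = 1$). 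Since by Proposition \ref{prop:sum} the sequence $\left\lbrace \E_{k} \left( x_{*} , \lambda_{*} \right) \right\rbrace _{k \geq 1}$ is nonincreasing, subtracting $\E_{k+1}$ from $\E_{k}$ and rearranging gives, for every $k \geq 1$,
\begin{equation*}
\gamma \Bigl( t_{k+1} \left( \phi_{k+1} - \phi_{k} \right) - \left( t_{k} - 1 \right) \left( \phi_{k} - \phi_{k-1} \right) \Bigr) \leq P_{k} - P_{k+1} .
\end{equation*}

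The right-hand side $P_{k} - P_{k+1}$ has no definite sign, so Lemma \ref{lem:sum} cannot be applied to $\phi_{k}$ directly; the key idea is to shift the increment. I would define $\theta_{k} := \gamma \left( \phi_{k} - \phi_{k-1} \right) + \dfrac{P_{k}}{t_{k}}$ and $d_{k} := \dfrac{P_{k}}{t_{k}} \geq 0$. Adding $P_{k+1}$ to both sides of the displayed inequality and using $\left( t_{k} - 1 \right) \dfrac{P_{k}}{t_{k}} = P_{k} - \dfrac{P_{k}}{t_{k}}$ yields $t_{k+1} \theta_{k+1} \leq \left( t_{k} - 1 \right) \theta_{k} + d_{k}$, which is \eqref{sum:inq:theta}; and $\gamma \phi_{k+1} = \gamma \phi_{k} + \gamma \left( \phi_{k+1} - \phi_{k} \right) \leq \gamma \phi_{k} + \theta_{k}$ with index $k+1$ gives \eqref{sum:inq:a} for $a_{k} := \gamma \phi_{k}$.

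It then remains to check $\sum_{k \geq 1} d_{k} < + \infty$, and this is exactly where the standing hypotheses $m < \gamma$ and $\gamma < 1$ are used. From $\gamma \leq 1$ and $t_{k} \geq 1$ one gets $\dfrac{1}{t_{k}} t_{k} \left( t_{k} - 1 + \gamma \right) \delta_{k} \leq t_{k} \delta_{k}$, $\dfrac{1}{t_{k}} \left( t_{k} - 1 + \gamma \right) \left( t_{k} - 1 \right) w_{k} \leq \left( t_{k} - 1 \right) w_{k}$ and $\dfrac{\ell_{k}}{t_{k}} \leq \dfrac{1 - \gamma}{2 \rho} \left\lVert \lambda_{k} - \lambda_{k-1} \right\rVert ^{2}$, hence $d_{k} \leq t_{k} \delta_{k} + \left( t_{k} - 1 \right) w_{k} + \dfrac{1 - \gamma}{2 \rho} \left\lVert \lambda_{k} - \lambda_{k-1} \right\rVert ^{2}$. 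Proposition \ref{prop:sum} provides $\sum_{k} t_{k} \delta_{k} < + \infty$ (because $m < \gamma$) as well as $\sum_{k} \left( t_{k} - 1 \right) \left\lVert x_{k} - x_{k-1} \right\rVert _{\Q}^{2} < + \infty$ and $\sum_{k} \left( 2 t_{k} - 1 \right) \left\lVert \lambda_{k} - \lambda_{k-1} \right\rVert ^{2} < + \infty$ (because $\gamma < 1$); the last two, together with $2 t_{k} - 1 \geq t_{k} - 1 \geq 0$ and $2 t_{k} - 1 \geq 1$, yield $\sum_{k} \left( t_{k} - 1 \right) w_{k} < + \infty$ and $\sum_{k} \left\lVert \lambda_{k} - \lambda_{k-1} \right\rVert ^{2} < + \infty$. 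Therefore $\sum_{k} d_{k} < + \infty$, and Lemma \ref{lem:sum} applies with $a_{k} = \gamma \phi_{k}$ (bounded below by $0$), $\left\lbrace \theta_{k} \right\rbrace$, $\left\lbrace t_{k} \right\rbrace$ (nondecreasing, $\geq 1$) and $\left\lbrace d_{k} \right\rbrace$, so $\left\lbrace \gamma \phi_{k} \right\rbrace$ converges, hence so do $\left\lbrace \phi_{k} \right\rbrace$ and $\left\lbrace \left\lVert u_{k} - u_{*} \right\rVert _{\W} \right\rbrace$. The one genuinely delicate point is the correction $\theta_{k} = \gamma \left( \phi_{k} - \phi_{k-1} \right) + P_{k}/t_{k}$: it converts the non-sign-definite telescoping remainder $P_{k} - P_{k+1}$ into a nonnegative summable sequence $d_{k}$, thereby reducing the whole difficulty (the inertial coefficient $\tfrac{t_{k}-1}{t_{k+1}} \to 1$, which rules out the classical Fejér argument) to precisely the situation handled by Lemma \ref{lem:sum}; note also that this argument needs only $0 < m < \gamma < 1$, neither $\beta > 0$ nor condition \eqref{assume:sup}.
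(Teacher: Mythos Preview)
Your proof is correct and follows essentially the same route as the paper: use the monotonicity of $\E_{k}(x_{*},\lambda_{*})$ together with the expansion \eqref{bnd:E}, then feed suitable sequences $a_{k},\theta_{k},d_{k}$ into Lemma \ref{lem:sum}, with the summability of $d_{k}$ coming from Proposition \ref{prop:sum} under $0<m<\gamma<1$. Your packaging of the auxiliary sequences (collecting the nonnegative block $P_{k}$ and setting $\theta_{k}=\gamma(\phi_{k}-\phi_{k-1})+P_{k}/t_{k}$, $d_{k}=P_{k}/t_{k}$) is slightly cleaner than the paper's explicit term-by-term splitting in \eqref{lim:inq}, but the two decompositions are equivalent in spirit.
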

\begin{proof}
Let $\left( x_{*} , \lambda_{*} \right) \in \sol$  be fixed. For brevity we will write
\begin{equation*}
u_{*} := \left( x_{*} , \lambda_{*} \right) \in \sol \ \mbox{and} \ u_{k} := \left( x_{k} , \lambda_{k} \right) \in \sH \times \sG \quad \forall k \geq 0.
\end{equation*}	
It follows from \eqref{sum:inq} that $\E_{k+1} \left( x_{*} , \lambda_{*} \right) \leq \E_{k} \left( x_{*} , \lambda_{*} \right)$ for every $k \geq 1$. In view of \eqref{bnd:E}, after rearranging some terms, we get for every $k \geq 1$
	\begin{align}
	& t_{k+1} \left( t_{k+1} - 1 + \gamma \right) \left( \Lb \left( x_{k+1} , \lambda_{*} \right) - \Lb \left( x_{*} , \lambda_{k+1} \right) + \dfrac{1}{2} \left\lVert u_{k+1} - u_{k} \right\rVert _{\W}^{2} \right) \nonumber \\
	& + \dfrac{\gamma}{2} t_{k+1} \left( \left\lVert u_{k+1} - u_{*} \right\rVert _{\W}^{2} - \left\lVert u_{k} - u_{*} \right\rVert _{\W}^{2} \right)
	+ \dfrac{1 - \gamma}{2 \rho} t_{k+1} \left\lVert \lambda_{k+1} - \lambda_{k} \right\rVert ^{2} \nonumber \\
	\leq \ & \left( t_{k} - 1 \right) \left( t_{k} - 1 + \gamma \right) \left( \Lb \left( x_{k} , \lambda_{*} \right) - \Lb \left( x_{*} , \lambda_{k} \right) + \dfrac{1}{2} \left\lVert u_{k} - u_{k-1} \right\rVert _{\W}^{2} \right) \nonumber \\
	& + \dfrac{\gamma}{2} \left( t_{k} - 1 \right) \left( \left\lVert u_{k} - u_{*} \right\rVert _{\W}^{2} - \left\lVert u_{k-1} - u_{*} \right\rVert _{\W}^{2} \right)
	+ \dfrac{1 - \gamma}{2 \rho} \left( t_{k} - 1 \right) \left\lVert \lambda_{k} - \lambda_{k-1} \right\rVert ^{2} \nonumber \\
	& + \left( t_{k} - 1 + \gamma \right) \left( \Lb \left( x_{k} , \lambda_{*} \right) - \Lb \left( x_{*} , \lambda_{k} \right) \right)
	+ \dfrac{1}{2} \left( t_{k+1} - 1 + \gamma \right) \left\lVert u_{k+1} - u_{k} \right\rVert _{\W}^{2} \nonumber \\
	& + \dfrac{1 - \gamma}{2 \rho} \left\lVert \lambda_{k+1} - \lambda_{k} \right\rVert ^{2} . \label{lim:inq}
	\end{align}
Set $a_{0}:= \dfrac{\gamma}{2} \left\lVert u_{0} - u_{*} \right\rVert _{\W}^{2} \geq 0$ and for every $k \geq 1$
	\begin{align*}
a_{k}	 := & \dfrac{\gamma}{2} \left\lVert u_{k} - u_{*} \right\rVert _{\W}^{2} \geq 0, \nonumber \\
	\theta_{k}	 := & \left( t_{k} - 1 + \gamma \right) \left( \Lb \left( x_{k} , \lambda_{*} \right) - \Lb \left( x_{*} , \lambda_{k} \right) + \dfrac{1}{2} \left\lVert u_{k} - u_{k-1} \right\rVert _{\W}^{2} \right) \nonumber \\
	& +  \left( a_{k} - a_{k-1} \right)
	+ \dfrac{1 - \gamma}{2 \rho} \left\lVert \lambda_{k} - \lambda_{k-1} \right\rVert ^{2} , \nonumber \\
	d_{k}  := & \left( t_{k} - 1 + \gamma \right) \left( \Lb \left( x_{k} , \lambda_{*} \right) - \Lb \left( x_{*} , \lambda_{k} \right) \right)
	+ \dfrac{1}{2} \left( t_{k+1} - 1 + \gamma \right) \left\lVert u_{k+1} - u_{k} \right\rVert _{\W}^{2} \nonumber \\
	& + \dfrac{1 - \gamma}{2 \rho} \left\lVert \lambda_{k+1} - \lambda_{k} \right\rVert ^{2} \geq 0.
	\end{align*}
We notice that for every $k \geq 1$ the estimate \eqref{lim:inq} becomes \eqref{sum:inq:theta}, while \eqref{sum:inq:a} obviously holds. As $0 < m < \gamma < 1$, it follows from Proposition \ref{prop:sum} that $\sum_{k \geq 1} d_{k} < + \infty$.

Hence, we can apply Lemma \ref{lem:sum} to conclude that $\left\lbrace \left\lVert \left( x_{k} , \lambda_{k} \right)  - \left( x_{*} , \lambda_{*} \right) \right\rVert _{\W} \right\rbrace _{k \geq 1}$ is convergent.	
\end{proof}
The following result is the discrete counterpart of \cite[Theorem 4.7]{Bot-Nguyen} (see \eqref{ratesgrad}). Its proof is a direct consequence of Proposition \ref{prop:sum} and Proposition \ref{prop:dual}.
\begin{thm}
	\label{thm:KKT}
	Let $\left\lbrace \left( x_{k} , \lambda_{k} \right) \right\rbrace _{k \geq 0}$ be the sequence generated by Algorithm \ref{algo:fast} with the sequence $\{t_k\}_{k \geq 1}$ chosen to satisfy \eqref{assume:sup}, $0 < m < \gamma \leq 1$, $0 < \sigma < \dfrac{\gamma}{L + \gamma \beta \|A\|^2}$, $\beta >0$, and $ \left( x_{*} , \lambda_{*} \right) \in \sol$. Then it holds
	\begin{equation*}
	\left\lVert \nabla f \left( x_{k} \right) - \nabla f \left( x_{*} \right) \right\rVert = o \left( \dfrac{1}{\sqrt{k}} \right)
	\quad \textrm{ and } \quad
	\left\lVert A^* \lambda_k - A^* \lambda_* \right\rVert = o \left( \dfrac{1}{\sqrt{k}} \right)
	\textrm{ as } k \to + \infty .
	\end{equation*}
	consequently,
	\begin{equation*}
	\left\lVert \nabla_{x} \Lag \left( x_{k} , \lambda_{k} \right) \right\rVert = \left\lVert \nabla f \left( x_{k} \right) + A^* \lambda_k \right\rVert = o \left( \dfrac{1}{\sqrt{k}} \right)
	\textrm{ as } k \to + \infty,
	\end{equation*}
and
\begin{equation*}
	\left\lVert \nabla_{\lambda} \Lag \left( x_{k} , \lambda_{k} \right) \right\rVert = \left\lVert Ax_k - b \right\rVert  = o \left( \dfrac{1}{\sqrt{k}} \right)
	\textrm{ as } k \to + \infty .
	\end{equation*}
As seen in Section \ref{subsec:fast-rate}, if, in addition, $\{t_k\}_{k \geq 1}$ is chosen to satisfy Chambolle-Dossal or Attouch-Cabot rule and $m:= \frac{2}{\alpha-1}$, then
	\begin{equation*}
	\left\lVert \nabla_{\lambda} \Lag \left( x_{k} , \lambda_{k} \right) \right\rVert = \left\lVert Ax_k - b \right\rVert  = \bO \left( \dfrac{1}{k^{2}} \right)
	\textrm{ as } k \to + \infty .
	\end{equation*}
\end{thm}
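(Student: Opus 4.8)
The plan is to extract the four $o(1/\sqrt{k})$ estimates directly from the summability statements of Proposition \ref{prop:sum} and Proposition \ref{prop:dual}, using the elementary fact that the general term of a convergent series tends to zero. First I would record two facts used throughout. Assumption \eqref{assume:sup} yields $t_k\ge\kappa k$ for every $k\ge1$, hence $t_{k+1}\ge\kappa(k+1)$, and, since $t_{k+1}\ge1$, also $t_{k+1}(t_{k+1}-1)\ge\kappa^2(k+1)^2/2$ as soon as $\kappa(k+1)\ge2$ (which fails for only finitely many $k$, hence is irrelevant for tails of series). Moreover the hypothesis $m<\gamma$ ensures $1-m/\gamma>0$, so the prefactors appearing in Proposition \ref{prop:sum} and Proposition \ref{prop:dual} are genuinely positive; the strict bound $0<\sigma<\gamma/(L+\gamma\beta\lVert A\rVert^2)$ is exactly what is needed to apply Proposition \ref{prop:dual}, and $\beta>0$ is what makes the feasibility term there nontrivial.

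For the gradient estimate, Proposition \ref{prop:sum} gives $\sum_{k\ge1}t_{k+1}\lVert\nabla f(y_k)-\nabla f(x_*)\rVert^2<+\infty$, so by $t_{k+1}\ge\kappa(k+1)$ we get $\sum_{k\ge1}(k+1)\lVert\nabla f(y_k)-\nabla f(x_*)\rVert^2<+\infty$, whence $(k+1)\lVert\nabla f(y_k)-\nabla f(x_*)\rVert^2\to0$, i.e. $\lVert\nabla f(y_k)-\nabla f(x_*)\rVert=o(1/\sqrt{k})$. Similarly $\sum_{k\ge1}t_{k+1}(t_{k+1}-1)\lVert\nabla f(y_k)-\nabla f(x_k)\rVert^2<+\infty$ together with the quadratic lower bound $t_{k+1}(t_{k+1}-1)\ge\kappa^2(k+1)^2/2$ gives $\lVert\nabla f(y_k)-\nabla f(x_k)\rVert=o(1/k)$, and the triangle inequality then yields $\lVert\nabla f(x_k)-\nabla f(x_*)\rVert=o(1/\sqrt{k})$. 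For the dual variable, Proposition \ref{prop:dual} gives $(1-m/\gamma)\sum_{k\ge1}t_k\lVert A^*(\lambda_k-\lambda_*)\rVert^2<+\infty$, so with $1-m/\gamma>0$ and $t_k\ge\kappa k$ we obtain $\sum_{k\ge1}k\lVert A^*(\lambda_k-\lambda_*)\rVert^2<+\infty$, hence $\lVert A^*\lambda_k-A^*\lambda_*\rVert=o(1/\sqrt{k})$.

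The two ``consequently'' statements are then immediate. By the optimality system \eqref{intro:opt-Lag}, $\nabla f(x_*)+A^*\lambda_*=0$, so $\nabla_x\Lag(x_k,\lambda_k)=\nabla f(x_k)+A^*\lambda_k=\bigl(\nabla f(x_k)-\nabla f(x_*)\bigr)+\bigl(A^*\lambda_k-A^*\lambda_*\bigr)=o(1/\sqrt{k})$. For the feasibility, the term $\beta\sum_{k\ge1}t_{k+1}\lVert Ax_{k+1}-b\rVert^2<+\infty$ of Proposition \ref{prop:sum} combined with $\beta>0$ and $t_{k+1}\ge\kappa(k+1)$ gives $(k+1)\lVert Ax_{k+1}-b\rVert^2\to0$, that is $\lVert Ax_k-b\rVert=o(1/\sqrt{k})$, and $\lVert\nabla_\lambda\Lag(x_k,\lambda_k)\rVert=\lVert Ax_k-b\rVert$. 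Finally, for the improved $\bO(1/k^2)$ rate under the Chambolle-Dossal or Attouch-Cabot choice with $m:=2/(\alpha-1)$: since the theorem hypothesis forces $m<\gamma$, we are in the ``second case'' analyzed in Section \ref{subsec:fast-rate}, where it is proved that $\lVert Ax_k-b\rVert\le\Cbnd/t_k^2$ (for the Attouch-Cabot rule after the obvious index shift ensuring $t_k\ge1$), and $t_k\ge\kappa k$ then gives $\lVert Ax_k-b\rVert\le\Cbnd/(\kappa^2k^2)=\bO(1/k^2)$.

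I do not expect a genuine obstacle here: everything reduces to the cited propositions together with bookkeeping involving $\kappa$. The only points that need a touch of care are matching exponents when combining the $o(1/k)$ and $o(1/\sqrt{k})$ pieces for $\nabla f(x_k)-\nabla f(x_*)$, and keeping track of the fact that the prefactor $1-m/\gamma$ is strictly positive — which is precisely why the hypothesis is $m<\gamma$ rather than $m\le\gamma$.
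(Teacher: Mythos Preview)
Your proposal is correct and takes essentially the same approach as the paper, which simply states that the result is ``a direct consequence of Proposition \ref{prop:sum} and Proposition \ref{prop:dual}.'' You have spelled out the details the paper leaves implicit: using $t_k\ge\kappa k$ from \eqref{assume:sup} to convert the weighted summability into $\sum_k k\,\lVert\cdot\rVert^2<+\infty$, invoking the vanishing of the general term, and handling the passage from $\nabla f(y_k)$ to $\nabla f(x_k)$ via the second summability statement in Proposition \ref{prop:sum} and the triangle inequality.
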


Now we can prove the main theorem of this section establishing the convergence of the sequence of iterates generated by Algorithm \ref{algo:fast}.

\begin{thm}\label{maintheorem}
Let $\left\lbrace \left( x_{k} , \lambda_{k} \right) \right\rbrace _{k \geq 0}$ be the sequence generated by Algorithm \ref{algo:fast} with the sequence $\{t_k\}_{k \geq 1}$ chosen to satisfy \eqref{assume:sup}, $0 < m < \gamma < 1$, $0 < \sigma < \dfrac{\gamma}{L + \gamma \beta \|A\|^2}$ and $\beta >0$. Then the sequence $\left\lbrace \left( x_{k} , \lambda_{k} \right) \right\rbrace _{k \geq 0}$ converges weakly to a primal-dual optimal solution of \eqref{intro:pb}.
\end{thm}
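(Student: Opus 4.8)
The plan is to deduce the claim from Opial's Lemma (Lemma \ref{lem:Opial}), applied in the real Hilbert space $\sH \times \sG$ to the sequence $\left\lbrace u_{k} := \left( x_{k} , \lambda_{k} \right) \right\rbrace_{k \geq 1}$ and the closed convex set $\sol$. Since $\beta > 0$ and $\sigma < \gamma / \left( L + \gamma \beta \left\lVert A \right\rVert^{2} \right)$, the operator $\Q$ from \eqref{define:Q} is positive definite (cf. \eqref{ineq:Q}), so $\left\lVert \cdot \right\rVert_{\W}$ is a norm on $\sH \times \sG$ equivalent to the canonical one; in particular the notion of weak sequential cluster point is unaffected by the choice between $\left\lVert \cdot \right\rVert_{\W}$ and the canonical norm. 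It therefore suffices to verify the two hypotheses of Opial's Lemma with $\mathcal{C} = \sol$.

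The first hypothesis — existence of $\lim_{k \to +\infty} \left\lVert u_{k} - u_{*} \right\rVert_{\W}$ for every $u_{*} = \left( x_{*} , \lambda_{*} \right) \in \sol$ — is exactly the content of Proposition \ref{prop:lim}, whose standing assumption $0 < m < \gamma < 1$ is in force here. For the second hypothesis I would proceed as follows. By \eqref{assume:sup} and Proposition \ref{prop:bnd} the sequence $\left\lbrace u_{k} \right\rbrace_{k \geq 0}$ is bounded, hence it admits weak sequential cluster points; let $\bar{u} = \left( \bar{x} , \bar{\lambda} \right)$ be one of them, say $u_{k_{n}} \rightharpoonup \bar{u}$ as $n \to +\infty$. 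Theorem \ref{thm:KKT} (applicable since $\beta > 0$, $\sigma < \gamma / \left( L + \gamma \beta \left\lVert A \right\rVert^{2} \right)$ and $0 < m < \gamma \leq 1$) gives $\left\lVert \nabla f \left( x_{k} \right) + A^{*} \lambda_{k} \right\rVert \to 0$ and $\left\lVert A x_{k} - b \right\rVert \to 0$ as $k \to +\infty$, which is precisely $\TL \left( u_{k} \right) \to 0$ strongly in $\sH \times \sG$. Thus $\left( u_{k_{n}} , \TL u_{k_{n}} \right) \to \left( \bar{u} , 0 \right)$ in the weak $\times$ strong topology. Since $\TL$ is maximally monotone (Section \ref{augLagrangian}), its graph is sequentially closed in this topology (see, for instance, \cite[Proposition 20.38]{Bauschke-Combettes:book}), so $\TL \bar{u} = 0$, i.e. $\bar{u} \in \sol$. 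This establishes the second hypothesis, and Opial's Lemma then yields that $\left\lbrace u_{k} \right\rbrace_{k \geq 0}$ converges weakly to some element of $\sol$, that is, to a primal-dual optimal solution — equivalently, a saddle point of $\Lag$ — of \eqref{intro:pb}.

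With all the preparatory results already available, the remaining argument is short; the part requiring care is the verification of the second Opial hypothesis, which rests on the strong convergence $\TL u_{k} \to 0$ supplied by Theorem \ref{thm:KKT}. The genuinely delicate steps have in fact been settled earlier: Proposition \ref{prop:lim} hinges on the summability estimates of Proposition \ref{prop:sum} becoming informative precisely because $\gamma < 1$ — most notably $\sum_{k \geq 1} \left( t_{k+1} - 1 \right) \left\lVert \lambda_{k+1} - \lambda_{k} \right\rVert^{2} < +\infty$, the discrete analogue of the finite-energy property used in the continuous setting — while Theorem \ref{thm:KKT} exploits the strict bound on $\sigma$ through the coercivity of $\gamma \Q - L \Id$ in Proposition \ref{prop:dual}.
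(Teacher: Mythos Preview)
Your proof is correct and follows essentially the same route as the paper: apply Opial's Lemma with condition (i) supplied by Proposition \ref{prop:lim} and condition (ii) by combining Theorem \ref{thm:KKT} with the weak--strong sequential closedness of the graph of the maximally monotone operator $\TL$. Your explicit remark that $\left\lVert \cdot \right\rVert_{\W}$ is equivalent to the canonical norm (so that Opial's Lemma may be invoked in either) is a useful clarification the paper leaves implicit.
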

\begin{proof}
From Proposition \ref{prop:lim} it follows that the limit $\lim\limits_{k \to + \infty} \left\lVert (x_k, \lambda_k) - (x_*,\lambda_*) \right\rVert$ exists for every $ \left( x_{*} , \lambda_{*} \right) \in \sol$. This proves the first condition of  Lemma \ref{lem:Opial}.

In order to prove condition (ii), let $\left( \tx , \tlambda \right) \in \sH \times \sG$ be an arbitrary weak sequential cluster point of $\left\lbrace \left( x_{k} , \lambda_{k} \right) \right\rbrace _{k \geq 0}$. This means that there exists a subsequence $\left\lbrace \left( x_{k_{n}} , \lambda_{k_{n}} \right) \right\rbrace _{n \geq 0}$ which converges weakly to $\left( \tx , \tlambda \right)$ as $n \to + \infty$.
According to Theorem \ref{thm:KKT} we have $\nabla f \left( x_{k} \right) + A^{*} \lambda_{k} \to 0$ and $Ax_k - b \to 0$ as $k \to + \infty$, hence,
	\begin{equation*}
	\nabla f \left( x_{k_{n}} \right) + A^{*} \lambda_{k_{n}} \to 0  \quad \mbox{and} \quad Ax_{k_{n}} - b \to 0 \qquad \textrm{ as } \qquad n \to + \infty.
	\end{equation*}	
Since the graph of the operator $\TL$ is sequentially closed in $\left( \sH \times \sG \right) ^{\mathrm{weak}} \times \left( \sH \times \sG \right) ^{\mathrm{strong}}$ (cf. \cite[Proposition 20.38]{Bauschke-Combettes:book}), it follows from here that
	\begin{equation*}
	\begin{dcases}
	\nabla f \left( \tx \right) + A^{*} \tlambda 	& = 0 \nonumber \\
	A \tx - b										& = 0 
	\end{dcases} .
	\end{equation*}
	In other words, $\left( \tx , \tlambda \right) \in \sol$ and the proof is complete.
\end{proof}

\begin{rmk}\label{finalremark}
If the sequence $\{t_k\}_{k \geq 1}$ is chosen to satisfy the Chambolle-Dossal or the Attouch-Cabot rule with 
$$\alpha >3, \quad  m:=\dfrac{1}{\alpha-2} < \gamma < 1, \quad 0 < \sigma < \dfrac{\gamma}{L + \gamma \beta \|A\|^2} \quad \mbox{and} \quad \beta >0,$$ then Theorem \ref{maintheorem} guarantees that the sequence $\left\lbrace \left( x_{k} , \lambda_{k} \right) \right\rbrace _{k \geq 0}$ converges weakly to a primal-dual optimal solution of \eqref{intro:pb}. This statement is in addition to the fast convergence rates of order $\bO \left( 1/k^{2} \right)$ for the primal-dual gap, the feasibility measure, and the objective function value.

If the sequence $\{t_k\}_{k \geq 1}$ is chosen to satisfy the Nesterov rule, then, as we have seen, the fast convergence rate results also hold, however, since in this setting $m=\gamma=1$, one cannot apply Theorem \ref{maintheorem} to obtain the convergence of the iterates. This is consistent with the unconstrained case for which it is also not known if the sequence of iterates generated by the fast gradient method with inertial parameters following the Nesterov rule converges.
\end{rmk}

{\bf Acknowledgements.} The authors are thankful to the two anonymous reviewers for their remarks and suggestions which have improved the quality of the paper.

\end{document}